\theoremstyle{plain}
\newtheorem{te}{Theorem}[section]
\newtheorem{lem}[te]{Lemma}
\newtheorem{co}[te]{Corollary}
\newtheorem{pr}[te]{Proposition}
\newtheorem{qu}[te]{Question}
\newtheorem{con}[te]{Conjecture}
\theoremstyle{remark}
\newtheorem{re}[te]{Remark}
\newtheorem*{ack*}{Acknowledgment}
\def\n{{\bf n}}
\def\0{{\bf 0}}
\def\R{{\mathbb R}}
\def\E{{\mathbb E}}
\def\C{{\mathbb C}}
\def\S{{\mathbb S}}
\def\Z{{\mathbb Z}}
\def\P{{\mathbb P}}
\def\H{{\mathbb H}}
\def\supp{{\operatorname{supp}\,}}
\def\nint{\mathop{\diagup\kern-13.0pt\int}}
\def\les{{\;\lessapprox}\;}
\def\Nc{{\mathcal N}}
\def\Mc{{\mathcal M}}
\def\Pc{{\mathcal P}}
\def\Sc{{\mathcal S}}
\def\Uc{{\mathcal U}}
\newcommand{\norm}[1]{{\left\Vert#1\right\Vert}}
\begin{document}
	
	\title{Maximal $\Lambda(p)$-subsets of manifolds}
	
	\author{Ciprian Demeter}
	\address{Department of Mathematics, Indiana University, 831 East 3rd St., Bloomington IN 47405}
	\email{demeterc@iu.edu}
	
	\author{Hongki Jung}
	\address{Department of Mathematics, Louisiana State University, Baton Rouge, LA 70803}
	\email{hjung@lsu.edu}
	
	\author{Donggeun Ryou}
	\address{Department of Mathematics, Indiana University, 831 East 3rd St., Bloomington IN 47405}
	\email{dryou@iu.edu}

	\keywords{exponential sums, decoupling, $\Lambda(p)$-sets}
	\thanks{The first author is  partially supported by the NSF grant DMS-2349828}
	
	\begin{abstract} We construct maximal $\Lambda(p)$-subsets on a large class of curved manifolds, in an optimal range of Lebesgue exponents $p$. Our arguments combine restriction estimates and  decoupling with old and new probabilistic estimates.

	\end{abstract}

	\maketitle
	\section{Description of tight decoupling in \texorpdfstring{$\R^d$}{Rd}}

	We motivate the results in this paper with the following question. Given $d\ge 1$, $p\ge 2$ and $R\gg 1$, how many points $\xi_1,\ldots,\xi_M\in [0,1]^d$ can we find so that \begin{equation}
		\label{sqrlp}
		\frac{1}{R^d}\int_{[0,R]^d}|\sum_{n=1}^Ma_n e(\xi_n\cdot x)|^pdx\lesssim \|a_n\|_{l^2}^p
	\end{equation} 
	for each $a_n\in \C$? We use the notation $e(z)=e^{2\pi i z}$. Equivalently, 
	$$
	\int_{[0,1]^d}|\sum_{n=1}^Ma_n e(R\xi_n\cdot x)|^pdx\lesssim \|a_n\|_{l^2}^p.
	$$
	We will call $\{R\xi_n:\;1\le n\le M\}$  a (finite) $\Lambda(p)$-set. Strictly speaking, this name was reserved in the literature to the case when the points $R\xi_n$ are additionally in $\Z^d$. However, we will not enforce this restriction in our paper.

	Let us test this inequality with $a_n\equiv 1$. Constructive interference (Lemma \ref{lci}) shows that for $|x|<(100d)^{-1}$ we have
	$$|\sum_{n=1}^M e(\xi_n\cdot x)|\sim M.$$
	This shows that $M\lesssim R^{\frac{2d}{p}}$. Remarkably, constructive interference turns out to be the most severe obstruction. Moreover,  there are many choices of  points $\xi_1,\ldots,\xi_M\in [0,1]^d$ with maximal size $M\sim R^{\frac{2d}{p}}$ so that \eqref{sqrlp} holds. This is an immediate consequence of the following seminal result of Bourgain. 
	
	Given $p\ge 2$ and a finite collection $\Phi$ of uniformly bounded complex-valued functions  $\varphi_1,\ldots,\varphi_N$ on a probability space, we let $K_p(\Phi)$ be the smallest constant such that
	\begin{equation}
		\label{realvscomplex}	
		\|\sum_{n=1}^Na_n\varphi_n\|_p\le K_p(\Phi)(\sum_{n=1}^{N}|a_n|^2)^{1/2}
	\end{equation}
	holds for each $a_n\in\C$. This may be seen as a measure of the ``$L^p$ orthogonality" of $\Phi$.
	\begin{te}[\cite{Bo}]
		\label{tB}	
		For each orthonormal system $\Phi=\{\varphi_1,\ldots,\varphi_N\}$ of functions  with $\|\varphi_n\|_{L^\infty}\le 1$ and each $p>2$, there is  $\Psi\subset \Phi$ with size $\sim N^{2/p}$ such that
		$$K_p(\Psi)\lesssim 1.$$
		The implicit constant is independent of $N$. 
	\end{te}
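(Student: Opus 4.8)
The plan is to prove this by the probabilistic method, selecting $\Psi$ at random; all implicit constants below may depend on $p$ but not on $N$. Fix $p>2$, set $\delta=cN^{2/p-1}$ for a small constant $c=c(p)$, let $\xi_1,\dots,\xi_N$ be independent Bernoulli variables with $\E\xi_n=\delta$, and put $\Psi=\{\varphi_n:\xi_n=1\}$. A Chernoff estimate gives $\P(|\Psi|\ge\delta N/2)\to1$, while $K_p(\Psi)\le K_p(\Phi)<\infty$ always, so by Markov's inequality it suffices to prove
$$\E_\xi\,K_p(\Psi)\lesssim 1;$$
some realization then has $|\Psi|\gtrsim N^{2/p}$ and $K_p(\Psi)\lesssim1$, and discarding extra elements (which only decreases the constant) makes $|\Psi|\sim N^{2/p}$. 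The starting point is the identity $K_p(\Psi)=\sup_{\|a\|_{\ell^2}\le1}\|\sum_n\xi_n a_n\varphi_n\|_p$, obtained by restricting the coefficient vector to $\Psi$. Writing $\xi_n=\delta+(\xi_n-\delta)$ splits this into a deterministic part $\delta\,K_p(\Phi)$ and a mean-zero fluctuation $Y:=\sup_{\|a\|_{\ell^2}\le1}\|\sum_n(\xi_n-\delta)a_n\varphi_n\|_p$.

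The deterministic part is easy and pins down the exponent: interpolating $\|\sum_na_n\varphi_n\|_2=\|a\|_{\ell^2}$ (orthonormality) against $\|\sum_na_n\varphi_n\|_{L^\infty}\le\sum_n|a_n|\le\sqrt N\,\|a\|_{\ell^2}$ (from $\|\varphi_n\|_{L^\infty}\le1$) yields $K_p(\Phi)\le N^{1/2-1/p}$, hence $\delta\,K_p(\Phi)\lesssim N^{2/p-1}\cdot N^{1/2-1/p}=N^{1/p-1/2}\le1$. This is exactly why $2/p$ is the critical exponent.

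The substance is the estimate $\E_\xi\,Y\lesssim1$. I would obtain it by the generic chaining (Talagrand's $\gamma$-functionals) applied to the selector process $a\mapsto\|\sum_n(\xi_n-\delta)a_n\varphi_n\|_p$ on the ball $B=\{a\in\C^N:\|a\|_{\ell^2}\le1\}$. Its increments carry mixed subgaussian/subexponential tails: applying Bernstein's inequality pointwise in $x$ and then integrating (Minkowski, using $\|\varphi_n\|_{L^\infty}\le1$), one gets for $b\in\C^N$ and $r\ge1$
$$\left(\E_\xi\Big\|\sum_n(\xi_n-\delta)b_n\varphi_n\Big\|_p^{\,r}\right)^{1/r}\lesssim \sqrt{\max(r,p)}\,\delta^{1/2}\|b\|_{\ell^2}+\max(r,p)\,\|b\|_{\ell^\infty}.$$
The crude two-metric bound $\gamma_2\big(B,\delta^{1/2}\|\cdot\|_{\ell^2}\big)+\gamma_1\big(B,\|\cdot\|_{\ell^\infty}\big)$ is hopeless — its first term alone is $\sim\delta^{1/2}\sqrt N=N^{1/p}$ — so the chaining must be run scale by scale: decompose each coefficient vector by the dyadic size of its entries and the scale of approximation, approximate each piece in the metric adapted to its sparsity, and sum. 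The critical normalization $\delta\sim N^{2/p-1}$ is what forces the successive scales to decay geometrically and sum to $O(1)$. (Bourgain's original route replaces the one-shot selection by an iteration: repeatedly pass from a subsystem to a random half of it, at each step improving the $\Lambda(p)$-constant by a single-scale version of the above; after $O(\log N)$ steps the size has fallen to $\sim N^{2/p}$ and the constant has reached its $O(1)$ fixed point, provided each step succeeds with probability close enough to $1$ for a union bound.)

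The step I expect to be the main obstacle is this multi-scale entropy estimate — controlling the entropy numbers of $B$ in the mixed $\ell^2$/$\ell^\infty$ geometry dictated by $p$ and $\delta$, and arranging the dyadic decomposition so that everything telescopes without accumulating logarithmic losses. Here the hypothesis $\|\varphi_n\|_{L^\infty}\le1$ is used essentially ($L^2$-orthonormality alone does not control the moments $\E_\xi\|\sum_n(\xi_n-\delta)b_n\varphi_n\|_p^r$ as required), and this is the genuine technical core of the theorem.
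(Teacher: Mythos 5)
Your setup is fine and matches the standard framework: random selectors of density $\delta\sim N^{2/p-1}$, the identity $K_p(\Psi)=\sup_{\|a\|_{\ell^2}\le1}\|\sum_n\xi_na_n\varphi_n\|_p$, the split into $\delta K_p(\Phi)+Y$, the interpolation bound $K_p(\Phi)\le N^{\frac12-\frac1p}$ explaining why $2/p$ is critical, and the Bernstein-type moment bound for the increments are all correct. But the proposal stops exactly where the theorem begins: the estimate $\E_\xi Y\lesssim 1$ is essentially equivalent to the theorem itself (note $Y\le K_p(\Psi)+\delta K_p(\Phi)$, and conversely your reduction shows the theorem follows from it), and you explicitly defer the only nontrivial ingredient --- the multi-scale chaining/entropy argument in the mixed $\ell^2$/$\ell^\infty$ geometry --- calling it ``the main obstacle'' without supplying any mechanism for it. Observing that the crude $\gamma_2+\gamma_1$ bound loses a factor $N^{1/p}$ and asserting that a sparsity-adapted dyadic decomposition ``must telescope'' is a description of the difficulty, not a proof; this decomposition is precisely Bourgain's (and, in the dual formulation, Talagrand's) main technical achievement, and no known soft argument replaces it. So as it stands there is a genuine gap at the heart of the argument.

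For comparison: the paper does not reprove Theorem \ref{tB} from scratch either; it cites \cite{Bo}, and in Section \ref{prob} it proves the generalization (Theorem \ref{tnew}, which recovers Theorem \ref{tB} on taking $q=2$, plus Remark \ref{refinal} for complex exponentials) by importing the hard estimate as a black box, namely Talagrand's selector theorem (Theorem \ref{Tal_keythm}, i.e.\ Theorem 19.3.1 and its tail version in \cite{Tal_book}). The remaining work there is a duality device rather than a direct chaining over coefficient vectors: one controls the restricted operator norms $\|U_{\Psi(\omega)}\|_{C}$ for two truncation levels $C_1,C_2$, and Lemma \ref{f_C1C2} (a level-set decomposition using Lemma \ref{split_f}, $2$-convexity of $L^{p_1'}$, and the $K_q(\Phi)$ and trivial $L^\infty$ bounds) converts those two bounds into $K_p(\Psi(\omega))\lesssim 1$. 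If you want a complete write-up along your lines, you must either carry out the multi-scale argument in full or, as the paper does, quote Talagrand's theorem and then perform the (nontrivial, but elementary) conversion from the $\|U_S\|_{C}$ quantities back to $K_p$.
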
 
	It is immediate that if $\Phi$ satisfies the hypotheses of the theorem,  then $K_2(\Phi)=1$ and $K_\infty(\Phi)\le N^{1/2}$. These imply that  $1\le K_p(\Phi)\le N^{\frac12-\frac1p}$ for each $2\le p\le \infty$. The system $\Phi=\{e(nx):\;1\le n\le N\}$ on $[0,1]$ has $K_p(\Phi)\sim  N^{\frac12-\frac1p}$.
	\smallskip
	
	The $N\sim R^d$ functions $\varphi_\n(x)=e(\n\cdot x)$, $\n\in [0,R]^d\cap \Z^d$, form a uniformly bounded orthonormal system on $[0,1]^d$, so Theorem \ref{tB} is applicable. In fact, its proof  shows that a generic choice of a subset of size $N^{2/p}$ of $[0,R]^d\cap \Z^d$ will work. Such a set  will generically be somewhat evenly spread inside $[0,R]^d$ (see Remark \ref{fkroigr9ig9ti90} for the precise meaning of this, in a related context). 
	Equivalently, with high probability, the $1/R$-rescaled points $\xi_n$ will tend to be uniformly distributed in $[0,1]^d$.
	But is there a choice of $\sim R^{2d/p}$ such  points that live in a thin, highly localized set $E\subset [0,1]^d$? 
	
	A simpler version of this question was answered in \cite{D}, where the exploration of this phenomenon, termed {\em tight decoupling}, was initiated. More precisely, a rather satisfactory criterium was found for \eqref{sqrlp} to hold for constant coefficients $a_n=1$ and $\xi_n\in E$. 
	\smallskip
	
	In this paper we are concerned with the case of arbitrary coefficients. Our thin sets will be compact, smooth $m$-dimensional manifolds in $\R^d$
	\begin{equation}
		\label{ruuioregurtgurt8ohu}
		\Mc=\{(\eta,\Pi(\eta)):\;\eta\in[0,1]^m\}.
	\end{equation}
	
	Our primary goal is to prove optimal results, with $R$-independent constants. However,  we also consider similar inequalities involving arbitrarily small  $R^\epsilon$ losses. 
	Given $R$-dependent quantities $A(R),B(R)$, we write $A(R)\les B(R)$ if $A(R)\lesssim_\epsilon R^\epsilon B(R)$ as $R\to \infty$, for arbitrarily small $\epsilon>0$.
	
	In our applications, the orthonormal systems $\Phi$ will consist of complex exponentials on $[0,1]^d$, with frequencies in a set $S\subset [0,R]^d$. It is easy to see that any such $S$ may have size at most $\sim R^d$.
	
	The set $S$ may not necessarily consist of lattice points $\Z^d$. Pairwise orthogonality within $\Phi$ will be guaranteed by the weaker assumption that $s-s'$ has at least one integer coordinate for each $s,s'\in S$. If the set $S$
	is clear from the context, we will prefer writing $K_p(S)$ in place of $K_p(\Phi)$.
	\smallskip
	
	Let us formulate a few questions, that have in part been asked in \cite{D}.  
	\begin{qu}
		\label{q1}
		Consider a smooth manifold $\Mc$ in $[0,1]^d$, and let $2<p<\infty$.
		\begin{itemize}
			
			\item Q1. Is there a set $\Pc_R\subset \Mc$ for $R\gg 1$, such that $|\Pc_R|\sim R^{2d/p}$ and 
			\begin{equation}
				\label{ckljfhreughrtuifirst}
				\frac{1}{R^d}\int_{[0,R]^d}|\sum_{\xi\in\Pc_R}a_\xi e(\xi\cdot x)|^pdx\lesssim \|a_\xi\|_{l^2}^p
			\end{equation}
			holds for each  $a_\xi\in\C$? This is the same as $K_p(R\Pc_R)\lesssim 1$.
			
			\item Q2. How about if we allow $R^\epsilon$ losses,
			\begin{equation}
				\label{ckljfhreughrtui}
				\frac{1}{R^d}\int_{[0,R]^d}|\sum_{\xi\in\Pc_R}a_\xi e(\xi\cdot x)|^pdx\les \|a_\xi\|_{l^2}^p?
			\end{equation}
			\item Q3. Are there explicit examples in either case?
		\end{itemize}
	\end{qu}
	The study of tight decoupling in \cite{D} was  born partly out of the desire to explain the following phenomenon for the sphere $\S^{d-1}$.  Consider an integer $R^2$ for which $R\S^{d-1}\cap \Z^{d}$ contains $\approx R^{d-2}$ points (this is the case for infinitely many $R$). The rescaled collection $\Pc_R=\frac1{R}(R\S^{d-1}\cap \Z^d)$ is conjectured to satisfy the tight decoupling inequality \eqref{ckljfhreughrtui} with $p=\frac{2d}{d-2}$.  Due to  $l^2$ decoupling, \cite{BD}, (and periodicity),  \eqref{ckljfhreughrtui} can be seen to hold for $p\le \frac{2(d+1)}{d-1}$. This is the largest range guaranteed by $l^2$ decoupling. So how special is the example of the lattice points on the sphere, for which we have square root cancellation in the maximal range $p\le \frac{2d}{d-2}$? Our arguments will show that this behavior is not isolated, but rather generic. Tight decoupling holds with high probability.
	\smallskip
	
	Questions Q1, Q2 and Q3 only make sense for $d>1$. However, problems of similar flavor have been addressed in \cite{Bo2} in one dimension, where manifolds $\Mc$ are replaced with arithmetic sets such as the (rescaled) squares $\{\frac{n^2}{R}:\;1\le n\le \sqrt{R}\}$. The general principle that identifies $\Lambda(p)$-sets of maximal size inside a finite set $S$ can be formulated as follows. 
	\begin{pr}
		\label{form}	
		Write $N=R^d$.
		Let $S\subset [0,R]^d$ be such that $e(s\cdot x)$ form an orthonormal system on $[0,1]^d$. Assume that for some $q\ge 2$ we have
		\begin{equation}
			\label{djifureiguioteugopr6u}
			K_q(S)\lesssim |S|^{1/2}N^{-1/q}.
		\end{equation}
		Then for each $p> q$ there is a subset $S_0\subset S$ with size $|S_0|\sim N^{2/p}$ and 
		$K_p(S_0)\lesssim 1$. The same holds true for $p=q$, under the extra assumption that $K_q(S)\ge N^{\kappa}$ for some $\kappa>0$.
	\end{pr}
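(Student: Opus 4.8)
The plan is to run Bourgain's random selection method---the engine behind Theorem~\ref{tB}---but to feed it the $L^q$ information \eqref{djifureiguioteugopr6u} so as to enlarge the extracted set from the generic size $|S|^{2/p}$ up to $N^{2/p}$. (When $S=[0,R]^d\cap\Z^d$ the hypothesis holds with an absolute constant and the statement is exactly Theorem~\ref{tB}.) The first thing I would do is propagate the hypothesis to all exponents $\ge q$: the map $(a_s)\mapsto\sum_{s\in S}a_s e(s\cdot x)$ sends $l^2\to L^\infty([0,1]^d)$ with norm $K_\infty(S)\le|S|^{1/2}$ by Cauchy--Schwarz, so interpolating this with \eqref{djifureiguioteugopr6u} (complex interpolation of the target, the $l^2$ endpoint fixed) gives
\[
K_p(S)\;\lesssim\;K_q(S)^{q/p}\,\big(|S|^{1/2}\big)^{1-q/p}\;\lesssim\;|S|^{1/2}N^{-1/p}\qquad(p\ge q).
\]
Moreover $K_2(S)=1$ and $\|\cdot\|_{L^2}\le\|\cdot\|_{L^q}$ on the probability space $[0,1]^d$, so \eqref{djifureiguioteugopr6u} forces $|S|\gtrsim N^{2/q}\ge N^{2/p}$. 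Hence the target cardinality $m:=\lceil c\,N^{2/p}\rceil$ satisfies $m\le|S|$ (for $R$ large and $c$ small) and $\delta:=m/|S|\in(0,1]$ is an admissible selection probability.

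Next I would select $S_0:=\{s\in S:\xi_s=1\}$, where $(\xi_s)_{s\in S}$ are i.i.d.\ Bernoulli$(\delta)$. By Chebyshev, $|S_0|\in[m/2,2m]$ with probability $\ge 9/10$, so it suffices to prove $\P\big(K_p(S_0)\le C_0\big)>1/2$ for some absolute $C_0$. Writing $\sum_s\xi_s a_s e(s\cdot x)=\delta\sum_s a_s e(s\cdot x)+\sum_s(\xi_s-\delta)a_s e(s\cdot x)$ and taking $L^p$ norms, the mean term contributes at most
\[
\delta\,K_p(S)\,\|a_s\|_{l^2}\;\lesssim\;\delta\,|S|^{1/2}N^{-1/p}\|a_s\|_{l^2}\;=\;\frac{m\,N^{-1/p}}{|S|^{1/2}}\,\|a_s\|_{l^2}\;\lesssim\;\|a_s\|_{l^2},
\]
using the display above and $|S|^{1/2}\gtrsim N^{1/p}\gtrsim m\,N^{-1/p}$. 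Thus the problem reduces to the random fluctuation estimate
\[
\E\,\sup_{\|a_s\|_{l^2}\le1}\Big\|\sum_{s\in S}(\xi_s-\delta)\,a_s\,e(s\cdot x)\Big\|_{L^p([0,1]^d)}\;\lesssim\;1.
\]

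This last estimate is the heart of the matter and the hard part. Here one invokes Bourgain's metric-entropy argument for selector processes: after a symmetrization, the resulting Rademacher--selector process is controlled by a chaining (Dudley-type) bound in which the covering numbers of the index set are estimated \emph{through the $L^q$-orthogonality of $S$}, rather than through the crude $L^\infty$ bound $K_\infty(S)\le|S|^{1/2}$ --- the latter would merely reproduce Theorem~\ref{tB} applied to $S$ directly, hence only the smaller size $|S|^{2/p}$. The square-root gain $\delta^{1/2}$ from cancellation in the selectors, weighed against the entropy of $S$ at intermediate exponents between $q$ and $p$, is what lets the exponent bookkeeping close with an $R$-independent constant, and this works precisely because $p>q$ leaves a positive power of $N$ of slack. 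At the endpoint $p=q$ this scheme produces an extra factor $\log N$; the extra hypothesis $K_q(S)\ge N^\kappa$ combined with \eqref{djifureiguioteugopr6u} forces $|S|\gtrsim N^{2/q+2\kappa}$, hence $\delta\lesssim N^{-2\kappa}$, and the resulting polynomial saving absorbs the logarithm.

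Finally, intersecting the event $\{K_p(S_0)\le C_0\}$ with $\{|S_0|\in[m/2,2m]\}$ yields a realization giving a subset $S_0\subset S$ with $|S_0|\sim N^{2/p}$ and $K_p(S_0)\lesssim1$, as claimed. I expect the main obstacle to be the chaining step: implementing it so that the $L^q$ hypothesis is used sharply and the exponents balance to an absolute constant, together with making the $p=q$ case genuinely quantitative by pinning down how $K_q(S)\ge N^\kappa$ defeats the logarithmic loss.
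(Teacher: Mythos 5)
Your setup is sound and matches the standard strategy: the interpolation bound $K_p(S)\lesssim |S|^{1/2}N^{-1/p}$, the lower bound $|S|\gtrsim N^{2/q}$, the random selection at density $\delta\sim N^{2/p}/|S|$, and the split into mean plus fluctuation are all correct, and they reduce the proposition to the estimate $\E\sup_{\|a\|_{l^2}\le 1}\|\sum_{s}(\xi_s-\delta)a_s e(s\cdot x)\|_{L^p}\lesssim 1$. But that estimate is the entire content of the statement, and your proposal does not prove it: it invokes ``Bourgain's metric-entropy argument for selector processes'' as a black box, with a qualitative description of how the $L^q$ hypothesis and the factor $\delta^{1/2}$ ``should'' balance, and you yourself flag the chaining step as the unresolved obstacle. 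If one is allowed to quote that black box, the case $p>q$ is literally Corollary 5 of \cite{Bo2}, which is exactly how the paper disposes of it (the paper's proof of the proposition is a two-line deduction from Theorem \ref{mi3}, using that constructive interference gives the matching lower bound $K_q(S)\gtrsim |S|^{1/2}N^{-1/q}$, so the guaranteed subset size $K_q(S)^{-2q/p}|S|^{q/p}$ is $\sim N^{2/p}$). What cannot be quoted is the endpoint $p=q$: that is the new contribution of the paper, and your treatment of it --- ``the scheme produces an extra $\log N$, and $\delta\lesssim N^{-2\kappa}$ absorbs it'' --- is a heuristic, not an argument. In the paper this endpoint requires re-running the machinery with modified parameters: in Corollary \ref{Cor_prob_UJ} one must take $C_1=\delta^{-1}N^{-\kappa/2}$ and $\epsilon_1=\kappa/2$ so that the admissibility condition $\delta\le (B_1\epsilon_1 N^{\epsilon_1})^{-1}$ of Talagrand's Theorem \ref{Tal_keythm} still holds, and in Lemma \ref{f_C1C2} the bound $C_1\gtrsim N^{\kappa/2}$ is what defeats the $(\log N)^{p_1/2}$ factor in the large-level-set integral. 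None of this is visible from the log-absorption heuristic, so the endpoint half of the statement is genuinely unproved in your proposal.

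A secondary, structural remark: you propose to feed the $L^q$ hypothesis into the covering-number estimates of the chaining. That is not how the paper's argument (following Talagrand \cite{Tal_book}, Section 5) uses it, and it is the harder road. There the chaining engine, Theorem \ref{Tal_keythm}, controls $\|U_{\Psi(\omega)}\|_C$ for a general $2$-convex dual and is completely blind to $K_q(\Phi)$; the $L^q$ information enters only (i) through the choice of the selection probability $\delta=K_q(\Phi)^{-2q/p}|\Phi|^{-1+q/p}$, and (ii) deterministically, in Lemma \ref{f_C1C2}, where the piece $v_2=\sum\beta_n\varphi_n$ with $\sum|\beta_n|^2\le C_2^{-1}\log N$ is bounded in $L^p$ by interpolating the $K_q$ bound against the trivial $L^\infty$ bound $N^{1/2}(\sum|\beta_n|^2)^{1/2}$. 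Separating the probabilistic input from the $L^q$ input in this way is precisely what makes the endpoint modification tractable, and is the piece your outline is missing.
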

	This is a particular case  of Theorem \ref{mi3} below. The case $p>q$ is Corollary 5 in \cite{Bo2}, while $p=q$ is our own contribution, crucially needed to address endpoint results. 
	
	When $q=2$, this follows from  Theorem \ref{tB}. Indeed, note that  \eqref{djifureiguioteugopr6u} forces $|S|\sim N$. When $q>2$, \eqref{djifureiguioteugopr6u} can be understood as a measure of ``maximal $L^q$ orthogonality". Let us explain what this means. It first shows that $|S|\gtrsim N^{2/q}$. Since $S\subset [0,R]^d$, constructive interference shows that $	K_q(S)\gtrsim |S|^{1/2}N^{-1/q}$. Thus, subsets of $[0,R]^d$ satisfying \eqref{djifureiguioteugopr6u} have maximal possible $L^q$ orthogonality for their size. For comparison, the largest possible value of $K_q(S)$ (minimal $L^q$ orthogonality) is $\sim |S|^{1/2}|S|^{-1/q}$. Proposition \ref{form} says the following. Each $L^2$ orthonormal set  with maximal $L^q$ orthogonality contains an $L^p$ orthogonal subset of maximum possible size. 
	
	In \cite{Bo2}, Bourgain proved that if $q>4$ then
	\begin{equation}
		\label{ogithioioytuhio}	
		K_q(\{1^2,2^2,\ldots,M^2\})\lesssim M^{\frac12-\frac2q}.
	\end{equation}
	This matches \eqref{djifureiguioteugopr6u} when $R=M^2$, which leads to the existence of $\Lambda(p)$-subsets of the squares of maximal possible size, for $p>4$. 
	
	In our applications to manifolds defined by \eqref{ruuioregurtgurt8ohu}, we will use the sets $$S=\{(\n,R\;\Pi(\frac{\n}{R})):\;\n\in[0,R]^m\cap\Z^m\}.$$ 
	In Section \ref{s2} we observe that if the extension operator for $\Mc$ satisfies an $L^2\to L^q$ Fourier restriction estimate, then $S$ satisfies \eqref{djifureiguioteugopr6u} with the same $q$. This result is well-known, only our interpretation is new. Our use of Fourier restriction estimates leads to sharp results for hypersurfaces with non-zero Gaussian curvature and for non-degenerate curves. In particular, we fully characterize the range of $p$ for which Question Q1 has a positive answer for these manifolds. 
	
	The $L^2\to L^q$ restriction estimate can be seen as a measure of the ``curvature" of $\Mc$. Loosely speaking, curved manifolds cannot have ``large" intersections with hyperplanes.
	
	Bourgain's estimate \eqref{ogithioioytuhio} is a measure of the arithmetic properties on the squares, which is a substitute for curvature. The role of  hyperplanes in one dimension is played by arithmetic progressions. It is easy to show that \eqref{ogithioioytuhio} forces $\{1^2,\ldots, M^2\}$ to not contain any significantly long arithmetic progression $P$. Indeed, constructive interference shows that $K_q(P)\sim |P|^{\frac12-\frac1q}$, which leads to $|P|^{\frac12-\frac1q}\lesssim M^{\frac12-\frac2q}$. Letting $q\to 4$ shows that $|P|\lesssim_\epsilon M^\epsilon$. 
	\smallskip
	
	It perhaps comes as a surprise that $L^2\to L^q$ restriction estimates are strong enough to produce maximal $\Lambda(p)$-sets. Their proofs make a rather weak use of curvature, in particular they follow easily from decoupling estimates on the corresponding manifold. The downside of our approach in Section \ref{s2} is that all examples of $\Lambda(p)$-sets it leads to are probabilistic, so non-explicit. In Section \ref{s3} we use decoupling to produce a rather easy and elegant explicit example, albeit with $R^\epsilon$ losses. More precisely, the use of decoupling fully derandomizes the selection process by better exploiting curvature. When $p$ is even, this approach allows for deterministic input of ``flat" $\Lambda(p)$-sets, known since the work \cite{Ru} of Rudin. For such $p$, he constructed explicit subsets $S$ of $\{1,2,\ldots,N\}$ with size $\sim N^{2/p}$ and $K_p(S)\sim 1.$ No similar examples are known for other values of $p$.
	\smallskip
	
	In Section \ref{s4}  we investigate the range of smaller values of $p$, for which \eqref{ckljfhreughrtuifirst}, while false for arbitrary coefficients, was proved to hold for constant coefficients in \cite{D}. We propose an $l^p$ analog of the failing $l^2$ inequality, meant to put the results from \cite{D} into a more robust framework. However, while we cannot recover the results in \cite{D} via this more general approach, we draw some natural connections to small cap decoupling and to a conjectured $l^p$ analogue of Theorem \ref{mi3}.
	\smallskip
	
	Section \ref{prob} is devoted to refining Bourgain's probabilistic estimates. These are primarily motivated by our applications, but are also likely to find future applications.

	\begin{ack*}
		The second author is grateful to Alexander Ortiz for helpful discussions.
	\end{ack*}

	\section{A general answer for Question Q1}
	\label{s2}
	The results in this section rely on two ingredients. The first one is the following far-reaching generalization of Theorem \ref{tB}. The first part is Theorem 4 in \cite{Bo2}. The endpoint is new, see Theorem \ref{tnew} and  Remark \ref{refinal} in Section \ref{prob}.
	\begin{te}
		\label{mi3}	
		Let $\Phi$ be a finite orthonormal system consisting of complex exponentials $e(s\cdot x)$, $s\in\R^d$,  on $[0,1]^d$, and let $2\le q\le  p<\infty$.
		\\
		(a) Assume $p>q$. Then there exists $\Psi\subset \Phi$ satisfying 
		$$K_p(\Psi)\lesssim 1\text{  and  }|\Psi|\sim K_q(\Phi)^{-2q/p}|\Phi|^{q/p}.$$ 
		The implicit constants only depend on $p,q,d$, but not on $\Phi$.
		\\
		(b) $(p=q)$ Assume that there exists $\kappa>0$ such that $|\Phi|^{\kappa}  \leq K_q (\Phi)^2$. Then there exists $\Psi\subset \Phi$ satisfying 
		$$K_q(\Psi)\lesssim 1\text{  and  }|\Psi|\sim K_q(\Phi)^{-2}|\Phi|.$$
		The implicit constants only depend on $q,d,\kappa$.
	\end{te}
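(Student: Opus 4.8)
For part (a) we refer to \cite[Theorem~4]{Bo2}; the proof of (b) follows the same scheme fed by a quantitatively sharper probabilistic input, so we describe the common skeleton and then isolate what the endpoint needs. Write $\Phi=\{e(s\cdot x):s\in S\}$ with $N=|\Phi|$, put $K=K_q(\Phi)$, and let $(\xi_s)_{s\in S}$ be i.i.d.\ Bernoulli variables with $\mathbb{P}(\xi_s=1)=\delta$, where $\delta=K^{-2q/p}N^{q/p-1}$ for (a) and $\delta=K^{-2}$ for (b); set $\Psi=\{e(s\cdot x):\xi_s=1\}$. A Chernoff estimate gives $|\Psi|\sim\delta N$ outside an event of probability $o(1)$, so it is enough to show that, with probability bounded away from $0$,
\begin{equation}
\label{planmain}
\Big\|\sum_{s\in S}\xi_s a_s\,e(s\cdot x)\Big\|_{L^p([0,1]^d)}\lesssim\Big(\sum_{s\in S}\xi_s|a_s|^2\Big)^{1/2}\qquad\text{for all }(a_s)\in\C^S,
\end{equation}
i.e.\ that $K_p(\Psi)\lesssim1$. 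By Markov's inequality this follows once a suitable moment $\mathbb{E}_\xi\,K_p(\Psi)^{\gamma}$ is bounded (one may take $\gamma=p$, using the trivial bound $K_p(\Psi)\le N^{1/2-1/p}$ on the $o(1)$-probability bad event).

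To estimate that moment one splits an admissible coefficient vector $a$ (supported where $\xi_s=1$) into its $O(\log N)$ dyadic level sets $A_k=\{s:|a_s|\sim2^{-k}\}$, on each of which $a$ is flat; the triangle inequality then reduces the task to bounding, for every $A\subseteq S$ of cardinality $M$,
\begin{equation*}
\mathbb{E}_\xi\ \sup_{\substack{\supp b\subseteq A\\ |b_s|\sim1}}\Big\|\sum_{s\in A}\xi_s b_s\,e(s\cdot x)\Big\|_{L^p}^p
\end{equation*}
by $(\delta M)^{p/2}$. Here the uniform bound $|e(s\cdot x)|\le1$ is used twice: after symmetrizing in Rademacher signs, Khintchine's inequality together with $\sum_s\xi_s|b_s|^2|e(s\cdot x)|^2\le\sum_s\xi_s|b_s|^2$ disposes of the ``diagonal'' contribution, while the supremum over the phases of $b$ is handled by a chaining/entropy estimate on the polytorus $\mathbb{T}^A$, whose cost is balanced against the tail of $\|\sum_{s\in A}\xi_s\,e(s\cdot x)\|_{L^p}$. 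It is in controlling this tail that the curvature-type hypothesis enters, through the inequality $\|\sum_{s\in A}c_s e(s\cdot x)\|_q\le K\|c\|_2$, valid for every $A\subseteq S$ by the very definition of $K=K_q(\Phi)$.

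When $p>q$ there is a genuine gap between the two exponents and the chain of estimates closes with a constant depending only on $p,q,d$; this is (a). When $p=q$ the same computation leaks a factor $N^{o(1)}$, produced jointly by the $O(\log N)$ level sets and by the metric entropy of $\mathbb{T}^A$, and it cannot be shed by a soft limiting argument (letting $p\downarrow q$ in (a) degrades either the implicit constant or the size of $\Psi$). The hypothesis $N^{\kappa}\le K^{2}$, i.e.\ $\delta=K^{-2}\le N^{-\kappa}$, is exactly the compensating mechanism: a large $L^q$-orthogonality constant forces the random subsystem to occupy only a polynomially small fraction of $\Phi$, and this polynomial surplus swallows the sub-polynomial leakage. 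Making this precise requires a sharpened form of Bourgain's selection estimate in which the error is polynomially small in the density $\delta$ rather than merely $N^{o(1)}$; this is Theorem \ref{tnew} (compare Remark \ref{refinal}), and granting it, the scheme above delivers (b) with a constant depending only on $q,d,\kappa$.

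The main obstacle is thus entirely in part (b): one must keep simultaneous quantitative control over how the loss in the symmetrization and chaining steps depends on $N$, on $\delta$, and on the common exponent $p=q$, and then verify that the slack $\delta\le N^{-\kappa}$ genuinely wins. Equivalently, the crux is to prove the refined probabilistic statement of Section \ref{prob}; once it is available, the deduction of part (b) is essentially the bookkeeping indicated above.
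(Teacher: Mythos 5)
Your reduction is exactly the one the paper uses: part (a) is quoted from Theorem 4 of \cite{Bo2}, and part (b) is deduced from the endpoint probabilistic estimate (Theorem \ref{tnew}), with the passage from real-valued systems to complex exponentials handled as in Remark \ref{refinal} (splitting into the cosine and sine systems, which are orthonormal because $s-s'$ has an integer coordinate). Granting Theorem \ref{tnew}, your deduction of Theorem \ref{mi3} is correct and is the same as the paper's. Your side observations are also sound: a limiting argument $p\downarrow q$ in (a) does not give (b), and the hypothesis $|\Phi|^{\kappa}\le K_q(\Phi)^2$ is what makes the endpoint selection possible.

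The gap is that the substance of part (b) is precisely the refined estimate you ``grant,'' and the mechanism you sketch for it is not carried out and is not the paper's. You propose to rerun Bourgain's original scheme (dyadic level sets, Rademacher symmetrization, Khintchine, entropy/chaining on the polytorus) and assert that at $p=q$ the scheme leaks only a factor $N^{o(1)}$ which the polynomial slack $\delta=K_q(\Phi)^{-2}\le N^{-\kappa}$ absorbs; that assertion is exactly what would have to be proved, and nothing in your outline verifies that the chaining losses at the endpoint are in fact sub-polynomial and can be traded against $\kappa$. The paper instead proves Theorems \ref{tnew} and \ref{mir2} through Talagrand's $2$-convexity framework \cite{Tal_book} (following the idea of \cite{Ryou}): one works in $X=L^{p_1}$ with an auxiliary $p_1>p$, introduces the quantities $\|U_S\|_{C}$, applies Talagrand's selection theorem (Theorem \ref{Tal_keythm}) with two constants $C_1,C_2$, and shows $K_p(S)\lesssim 1+\|U_S\|_{C_1}+\|U_S\|_{C_2}/\sqrt{\log N}$ (Corollary \ref{rjutgurtu}). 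The hypothesis $N^{\kappa}\le K_q(\Phi)^2$ enters at two concrete points: it allows the choice $\epsilon=\kappa/2$ in Theorem \ref{Tal_keythm} (yielding the bound $\sim\kappa^{-1}$ in Corollary \ref{Cor_prob_UJ}), and it forces $C_1\gtrsim N^{\kappa/2}$, so that in Lemma \ref{f_C1C2} the cutoff $D=C_1^{1/(p-2)}$ is a positive power of $N$ and the tail integral beats the $(\log N)^{p_1/2}$ factor; the orthogonality input $K_q(\Phi)$ is then used via $\|v_2\|_q\lesssim K_q(\Phi)\|\beta\|_{\ell^2}$ and interpolation between $L^q$ and $L^\infty$. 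So your intuition about where $\kappa$ must help is right, but the endpoint estimate itself remains unproved in your write-up, and the route you indicate for it differs from (and is weaker than) the argument the paper actually supplies.
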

	Let $\Mc\subset \R^d$ be an arbitrary  $m$-dimensional manifold. We consider the orthonormal system $\Phi_{R,\Mc}$ on $[0,1]^d$ consisting of   $\varphi_\xi(x)=e(R\xi\cdot x)$ with $\xi=(\eta,\Pi(\eta))$, $\eta\in [0,1]^{m}\cap (R^{-1}\Z)^m$. Note that $|\Phi_{R,\Mc}|\sim R^m$. Constructive interference shows that for each $q\ge 2$ we have $K_q(\Phi_{R,\Mc})\gtrsim  R^{\frac{m}{2}-\frac{d}{q}}.$ The following corollary is a particular case of the general principle stated in Proposition \ref{form}. 
	\begin{co}
		\label{2.2}	
		Assume that there is $q\ge 2$ such that for each $R$ we have
		\begin{equation}
			\label{E2}
			K_q(\Phi_{R,\Mc})\sim R^{\frac{m}{2}-\frac{d}{q}}.
		\end{equation}
		Then for each $p\ge q$ there is $\Psi_R\subset \Phi_{R,\Mc}$ with $|\Psi_R|\sim R^{\frac{2d}{p}}$ and $K_p(\Psi_R)\lesssim 1.$
		
		The implicit constants may depend on $p,q,m,d$, but are independent of $R$.
	\end{co}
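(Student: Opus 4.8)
The plan is to feed the orthonormal system $\Phi=\Phi_{R,\Mc}$ directly into Theorem \ref{mi3} (equivalently, into Proposition \ref{form} with $N=R^d$), using nothing beyond the two facts $|\Phi_{R,\Mc}|\sim R^m$ and the standing hypothesis \eqref{E2}, that is $K_q(\Phi_{R,\Mc})\sim R^{\frac m2-\frac dq}$. Once the correct part of the theorem is invoked, the whole content of the corollary is a matter of matching exponents.

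First I would treat the case $p>q$. Applying Theorem \ref{mi3}(a) produces $\Psi_R\subset\Phi_{R,\Mc}$ with $K_p(\Psi_R)\lesssim 1$ and
\[ |\Psi_R|\sim K_q(\Phi_{R,\Mc})^{-2q/p}\,|\Phi_{R,\Mc}|^{q/p}\sim R^{-\frac{2q}{p}\left(\frac m2-\frac dq\right)}R^{\frac{mq}{p}}=R^{\frac{2d}{p}}, \]
which is precisely the asserted size; since the implicit constants in Theorem \ref{mi3}(a) depend only on $p,q,d$, so do ours.

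Next comes the endpoint $p=q$. Here I would first observe that \eqref{E2} is compatible with the trivial bound $K_q\ge K_2=1$ (valid because the $\varphi_\xi$ live on the probability space $[0,1]^d$) only when $\frac m2-\frac dq\ge 0$, i.e. $q\ge\frac{2d}{m}$. If this is strict, then $K_q(\Phi_{R,\Mc})^2\sim R^{\,m-\frac{2d}{q}}$ whereas $|\Phi_{R,\Mc}|^\kappa\sim R^{m\kappa}$, so choosing any $\kappa$ with $0<\kappa\le 1-\frac{2d}{mq}$ makes the hypothesis $|\Phi|^{\kappa}\le K_q(\Phi)^2$ of Theorem \ref{mi3}(b) hold for all large $R$; part (b) then gives $\Psi_R$ with $K_q(\Psi_R)\lesssim 1$ and $|\Psi_R|\sim K_q(\Phi_{R,\Mc})^{-2}|\Phi_{R,\Mc}|\sim R^{-(m-\frac{2d}{q})}R^m=R^{\frac{2d}{q}}=R^{\frac{2d}{p}}$. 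In the remaining degenerate case $q=\frac{2d}{m}$, \eqref{E2} forces $K_q(\Phi_{R,\Mc})\sim 1$ and $R^{\frac{2d}{p}}=R^m\sim|\Phi_{R,\Mc}|$, so one simply takes $\Psi_R=\Phi_{R,\Mc}$.

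There is no real obstacle internal to this corollary: the difficulty has all been displaced into Theorem \ref{mi3}, whose endpoint part (b) is the genuinely new ingredient (established in Section \ref{prob}). The one place that warrants a moment's care is exactly the verification carried out above, that the polynomial lower bound $|\Phi|^\kappa\le K_q(\Phi)^2$ required by Theorem \ref{mi3}(b) is automatically furnished by \eqref{E2}, together with pinning down the exceptional exponent $q=2d/m$ at which that lower bound degenerates and the conclusion is immediate.
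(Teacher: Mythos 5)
Your proposal is correct and follows essentially the same route as the paper: Theorem \ref{mi3}(a) with the exponent computation for $p>q$, Theorem \ref{mi3}(b) with a suitable $\kappa$ when $p=q$ and $\frac m2>\frac dq$, and $\Psi_R=\Phi_{R,\Mc}$ in the degenerate case $\frac m2=\frac dq$ (the paper takes $\kappa=\frac12-\frac{d}{mq}$). The only microscopic caveat is that, because \eqref{E2} carries implicit constants, you should choose $\kappa$ strictly below $1-\frac{2d}{mq}$ rather than at the endpoint, which your phrasing already permits.
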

	\begin{proof}
		The only case that needs an explanation is $p=q$. If $\frac{m}{2}>\frac{d}{q}$, we may apply Theorem \ref{mi3} with, say, $\kappa=\frac12-\frac{d}{mq}$. If $\frac{m}{2}=\frac{d}{q}$, we may take $\Psi_R=\Phi_{R,\Mc}$.
		
	\end{proof}
	It remains to prove \eqref{E2} for appropriate $q$.
	Consider the extension operator, for continuous functions $f:[0,1]^m\to \C$ 
	$$E^{\Pi}f(x)=\int_{[0,1]^m}f(\eta)e((\eta,\Pi(\eta))\cdot x)d\eta.$$
	The following is a well-known result. We are not certain about its first appearance in the literature. 
	\begin{te}
		\label{lkgt grtithiyphip[]}
		Assume that for some $r,q\ge 1$ and each $f\in L^r$ we have 
		$$\|E^{\Pi}f\|_{L^q(\R^d)}\lesssim \|f\|_{L^r([0,1]^m)}.$$
		Let $\Pc$ be a collection of $1/R$-separated points on $\Mc$.
		
		Then for each $R\gg 1$ and each $a_\xi\in\C$ we have
		$$\|\sum_{\xi\in\Pc}a_\xi e(\xi\cdot x)\|_{L^q([0,R]^d)}\lesssim R^{\frac{m}{r'}}\|a_\xi\|_{l^r}.$$
	\end{te}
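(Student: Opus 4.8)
I would avoid the most naive route --- approximating $\sum_{\xi\in\Pc}a_\xi e(\xi\cdot x)$ by $E^{\Pi}f$ for a step density $f=\sum_\xi a_\xi|\tau_\xi|^{-1}\mathbf 1_{\tau_\xi}$ ($\tau_\xi$ a $\sim 1/R$-cube about the base point of $\xi$) --- since on $[0,R]^d$ this only gives $E^{\Pi}f=\sum_\xi a_\xi\Theta_\xi(x)e(\xi\cdot x)$ with $\Theta_\xi(x)=1+O(1/R)$, and the $x$- and $\xi$-dependent factors $\Theta_\xi$ cannot be divided out of the sum, the error being controllable only in $\ell^1$. Instead I would work on the Fourier side. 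Fix a dimensional bump: with $c_0=1/(4\sqrt d)$ and $\psi_0(x)=\prod_{j=1}^d\big(\sin(\pi c_0 x_j)/(\pi c_0 x_j)\big)^2$ one has $\psi_0\ge 0$, $\psi_0\gtrsim_d 1$ on $[-\tfrac12,\tfrac12]^d$, and $\widehat{\psi_0}$ supported in $[-c_0,c_0]^d$; put $\phi_R(x)=\psi_0((x-x_R)/R)$ with $x_R$ the centre of $[0,R]^d$, so $\phi_R\gtrsim_d 1$ on $[0,R]^d$ and $\widehat{\phi_R}$ is supported in $B(0,c_0\sqrt d/R)=B(0,\tfrac1{4R})$. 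Since $\phi_R\gtrsim 1$ on $[0,R]^d$ it suffices to bound $\|F\|_{L^q(\R^d)}$ by $R^{m/r'}\|a_\xi\|_{l^r}$, where $F:=\phi_R\cdot\sum_{\xi\in\Pc}a_\xi e(\xi\cdot\,)$; the gain is that $\widehat F=\sum_\xi a_\xi\widehat{\phi_R}(\,\cdot-\xi)$ is supported in $\bigcup_\xi B(\xi,\tfrac1{4R})$, and since the base points lie in $[0,1]^m$ and $\Pi$ is Lipschitz, this support lies in the thin slab $\mathcal S_\delta=\{(\eta,s):\ |s-\Pi(\eta)|<\delta\}$ with $\delta\sim 1/R$.

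The heart of the argument I would isolate as a thickening lemma: under the hypothesis $\|E^{\Pi}f\|_{L^q(\R^d)}\lesssim\|f\|_{L^r([0,1]^m)}$, any $G$ with $\widehat G$ supported in $\mathcal S_\delta$ (over base coordinate in $[0,1]^m$) satisfies $\|G\|_{L^q(\R^d)}\lesssim\delta^{(d-m)/r'}\|\widehat G\|_{L^r(\R^d)}$. To prove it, parametrise the slab by $(\eta,t)\mapsto(\eta,\Pi(\eta)+t)$, $|t|<\delta$ --- a diffeomorphism with Jacobian identically $1$ --- and use $(\eta,\Pi(\eta)+t)\cdot x=(\eta,\Pi(\eta))\cdot x+t\cdot x''$ (writing $x=(x',x'')\in\R^m\times\R^{d-m}$) to write
$$G(x)=\int_{|t|<\delta}e(t\cdot x'')\,E^{\Pi}(G_t)(x)\,dt,\qquad G_t(\eta):=\widehat G\big((\eta,\Pi(\eta)+t)\big).$$
Then Minkowski's integral inequality ($q\ge1$), the extension hypothesis applied slice by slice, and Hölder in $t$ over $\{|t|<\delta\}$ (of measure $\sim\delta^{d-m}$) yield
$$\|G\|_{L^q(\R^d)}\le\int_{|t|<\delta}\|E^{\Pi}(G_t)\|_{L^q(\R^d)}\,dt\lesssim\int_{|t|<\delta}\|G_t\|_{L^r}\,dt\lesssim\delta^{(d-m)/r'}\Big(\int_{|t|<\delta}\|G_t\|_{L^r}^r\,dt\Big)^{1/r},$$
and the last factor equals $\|\widehat G\|_{L^r(\mathcal S_\delta)}=\|\widehat G\|_{L^r(\R^d)}$ since the parametrisation preserves measure.

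It then remains to apply this to $G=F$, $\delta\sim 1/R$, and to estimate $\|\widehat F\|_{L^r(\R^d)}$. The translates $\widehat{\phi_R}(\,\cdot-\xi)$ have pairwise disjoint supports (the $\xi$ are $1/R$-separated, radii $\tfrac1{4R}$), $\|\widehat{\phi_R}\|_\infty=\widehat{\phi_R}(0)=\int\phi_R\sim R^d$, and $\operatorname{supp}\widehat{\phi_R}$ has measure $\sim R^{-d}$, so $\|\widehat F\|_{L^r(\R^d)}^r=\sum_\xi|a_\xi|^r\int|\widehat{\phi_R}|^r\lesssim R^{d(r-1)}\|a_\xi\|_{l^r}^r$, i.e.\ $\|\widehat F\|_{L^r(\R^d)}\lesssim R^{d/r'}\|a_\xi\|_{l^r}$. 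Combining with the lemma, $\|F\|_{L^q(\R^d)}\lesssim\delta^{(d-m)/r'}R^{d/r'}\|a_\xi\|_{l^r}\sim R^{-(d-m)/r'}R^{d/r'}\|a_\xi\|_{l^r}=R^{m/r'}\|a_\xi\|_{l^r}$, which is the claim. The endpoints $r=1$ and $r=\infty$ go through verbatim, reading $\delta^{(d-m)/r'}$ and the Hölder step in the natural way.

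The step demanding care is the slicing lemma: one must verify that $(\eta,t)\mapsto(\eta,\Pi(\eta)+t)$ genuinely has unit Jacobian, that the modulation $e(t\cdot x'')$ factors out of the slice integral so that Minkowski applies, and --- the one mildly delicate point --- that $\widehat F$ is supported over base coordinates inside $[0,1]^m$. Near $\partial[0,1]^m$ the balls $B(\xi,\tfrac1{4R})$ protrude slightly, so I would either extend $\Pi$ smoothly to a fixed neighbourhood of $[0,1]^m$ (the extension estimate persisting on the slightly enlarged parameter cube by a routine localisation, since $\Mc$ is a piece of a compact smooth manifold) or phrase the lemma in terms of the $\tfrac1{4R}$-neighbourhood of $\Pc$ directly; this is a harmless technicality. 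Everything else --- the construction of $\phi_R$ and the disjointness bookkeeping for $\widehat F$ --- is routine.
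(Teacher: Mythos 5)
Your argument is correct and is essentially the paper's own proof: your ``thickening lemma'' is exactly the neighborhood estimate \eqref{E1}, proved the same way (slice the slab over $t$, Minkowski, the extension hypothesis, H\"older on $|t|<\delta$), and your multiplication by $\phi_R$ with $\widehat{\phi_R}$ supported in a ball of radius $\sim 1/R$ together with the disjoint-support computation $\|\widehat F\|_{L^r}\lesssim R^{d/r'}\|a_\xi\|_{l^r}$ matches the paper's choice of $\phi(x/R)$ and the ensuing bookkeeping. The boundary issue you flag (base coordinates protruding slightly beyond $[0,1]^m$) is the same harmless technicality left implicit in the paper, handled exactly as you suggest by extending $\Pi$ to a slightly larger cube.
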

	\begin{proof}
		Define the $R^{-1}$-neighborhood of $\Mc$
		$$\Nc_\Pi(R^{-1})=\{(\eta,\Pi(\eta)+t):\:\eta\in[0,1]^m,\;t\in \R^{d-m},\;|t|<1/R\}.$$	
		We first show that for each $F$ with Fourier support inside $\Nc_{\Pi}(R^{-1})$ we have
		\begin{equation}
			\label{E1}
			\|F\|_{L^q(\R^d)}\lesssim R^{-\frac{d-m}{r'}}\|\widehat{F}\|_{L^r(\Nc_\Pi(R^{-1}))}.
		\end{equation}
		To this end, note that by Fourier inversion and Fubini
		$${F}(x)=C_d\int_{|t|<R^{-1}}[\int_{[0,1]^m}\widehat{F}(\eta,\Pi(\eta)+t)e((\eta,\Pi(\eta)+t)\cdot x)d\eta] dt.$$
		Letting $f_t(\eta)=\widehat{F}(\eta,\Pi(\eta)+t)$ we find 
		$$|{F}(x)|\le C_d\int_{B_{1/R}}|E^{\Pi}f_t(x)|dt.$$
		By Minkowski's integral inequality followed by our hypothesis applied to each $f_t$, then by H\"older's inequality on $B_{1/R}$, we have
		$$\|F\|_{L^q(\R^d)}\lesssim \int_{B_{1/R}}\|E^{\Pi}f_t(x)\|_{L^q(\R^d,dx)}dt\lesssim \int_{B_{1/R}}\|f_t\|_{L^r([0,1]^m)}dt$$
		$$\lesssim |B_{1/R}|^{1/r'}\|\widehat{F}\|_{L^r(\Nc_\Pi(R^{-1}))}\sim R^{-\frac{d-m}{r'}}\|\widehat{F}\|_{L^r(\Nc_\Pi(R^{-1}))}.$$
		Next, consider a smooth function $\phi:\R^d\to\R$ such that $\phi(x)\ge 1_{[0,1]^d}(x)$ and $\supp(\widehat{\phi})\subset [0,1/10]^d$. Then $$F(x)=\sum_{\xi\in\Pc}a_\xi\phi(x/R)e(\xi\cdot x)$$
		has Fourier support in $\Nc_{\Pi}(R^{-1})$. An easy computation using the $1/R$-separation of $\xi$ shows that
		$$\|\widehat{F}\|_{L^r}\sim R^{\frac{d}{r'}}\|a_\xi\|_{l^r}.$$
		We combine this with \eqref{E1} to write 
		$$\|\sum_{\xi\in\Pc}a_\xi e(\xi\cdot x)\|_{L^q([0,R]^d)}\lesssim \|F\|_{L^q(\R^d)}\lesssim R^{-\frac{d-m}{r'}}\|\widehat{F}\|_{L^r(\Nc_\Pi(R^{-1}))}\lesssim R^{\frac{m}{r'}}\|a_\xi\|_{l^r}.$$
	\end{proof}
	We specialize this to $r=2$, and combine it with Corollary \ref{2.2} to get the following criterium.
	\begin{pr}
		\label{4ot095gtk1}	
		Assume 	
		\begin{equation}
			\label{E3}
			\|E^{\Pi}f\|_{L^q(\R^d)}\lesssim \|f\|_{L^2([0,1]^m)}
		\end{equation}
		holds for some $q> 2$ and each continuous function  $f$. Then Question Q1 has a positive answer for each $p\ge q$. 
	\end{pr}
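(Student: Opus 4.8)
The plan is to funnel everything into Corollary \ref{2.2}: once we know $K_q(\Phi_{R,\Mc})\sim R^{\frac{m}{2}-\frac{d}{q}}$, that corollary already delivers, for every $p\ge q$, a subsystem of $\Phi_{R,\Mc}$ of the required size $\sim R^{2d/p}$ with $K_p\lesssim 1$, and this subsystem is exactly a set $\Pc_R\subset\Mc$ answering Q1. So the only real task is to verify the hypothesis \eqref{E2} of Corollary \ref{2.2} from the $L^2\to L^q$ restriction bound \eqref{E3}.

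First I would apply Theorem \ref{lkgt grtithiyphip[]} with $r=2$ (so $r'=2$) to the specific $1/R$-separated collection $\Pc=\{(\eta,\Pi(\eta)):\eta\in[0,1]^m\cap(R^{-1}\Z)^m\}$, which has $|\Pc|\sim R^m$ and is precisely the frequency set underlying $\Phi_{R,\Mc}$. This gives
$$\Big\|\sum_{\xi\in\Pc}a_\xi e(\xi\cdot x)\Big\|_{L^q([0,R]^d)}\lesssim R^{m/2}\|a_\xi\|_{l^2}.$$
Next I would rescale $x\mapsto x/R$: since the elements of $\Phi_{R,\Mc}$ are $\varphi_\xi(x)=e(R\xi\cdot x)$ on the probability space $[0,1]^d$, this change of variables costs a factor $R^{-d/q}$ and turns the above into $\|\sum_\xi a_\xi\varphi_\xi\|_{L^q([0,1]^d)}\lesssim R^{\frac{m}{2}-\frac{d}{q}}\|a_\xi\|_{l^2}$, i.e. $K_q(\Phi_{R,\Mc})\lesssim R^{\frac{m}{2}-\frac{d}{q}}$. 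Combined with the matching constructive-interference lower bound $K_q(\Phi_{R,\Mc})\gtrsim R^{\frac{m}{2}-\frac{d}{q}}$ recalled just before Corollary \ref{2.2}, this is exactly \eqref{E2}, and then Corollary \ref{2.2} finishes the proof: the subsystem $\Psi_R$ it produces corresponds to a set $\Pc_R\subset\Mc$ with $|\Pc_R|\sim R^{2d/p}$, and $K_p(\Psi_R)\lesssim 1$ is literally $K_p(R\Pc_R)\lesssim 1$, which is the assertion of Q1.

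There is no genuinely hard step here; the statement is a bookkeeping combination of Theorem \ref{lkgt grtithiyphip[]} and Corollary \ref{2.2}. The only points deserving care are: tracking the powers of $R$ through the rescaling so that the exponent lands on $\frac{m}{2}-\frac{d}{q}$ rather than something slightly off; observing that \eqref{E3} is only consistent with $\frac{m}{2}\ge\frac{d}{q}$, since $K_q(\Phi_{R,\Mc})\ge K_2(\Phi_{R,\Mc})=1$, so the degenerate regime $\frac{m}{2}<\frac{d}{q}$ (which Corollary \ref{2.2} does not treat) simply cannot occur under the hypothesis; and noting that the restriction estimate is $R$-independent while its consequence \eqref{E2} holds for all $R\gg 1$ with uniform constants, which is exactly the uniformity Corollary \ref{2.2} requires to conclude with $R$-independent implicit constants.
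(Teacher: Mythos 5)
Your proposal is correct and is exactly the paper's argument: the paper proves this proposition precisely by specializing Theorem \ref{lkgt grtithiyphip[]} to $r=2$, rescaling to get the upper bound in \eqref{E2}, matching it with the constructive-interference lower bound, and then invoking Corollary \ref{2.2}. Your additional observation that \eqref{E3} rules out the regime $\frac{m}{2}<\frac{d}{q}$ (since $K_q(\Phi_{R,\Mc})\ge 1$) is a sensible bit of bookkeeping consistent with the cases treated in Corollary \ref{2.2}.
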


	Let us test the criterium with two examples.
	\smallskip
	
	\textbf{Example 1:} Assume first that  $\Mc$ is the moment curve 
	$$\Gamma^d=\{(\eta,\eta^2,\ldots,\eta^d):\;\eta\in[0,1]\}.$$
	Then inequality \eqref{E3} holds for $q\ge d(d+1)$, see e.g. \cite{Dru} for the full range of extension estimates. Thus, Q1 has a positive answer when $p\ge d(d+1)$. On the other hand, no $\Pc_R\subset \Gamma^d$ with $|\Pc_R|\sim R^{2d/p}$ can satisfy \eqref{ckljfhreughrtuifirst} when $p<d(d+1)$. To see this, we use the following general principle.   
	
	\begin{lem}(Constructive interference)
		\label{lci}	
		Let $B$ be a rectangular box in $\R^d$ with dimensions $l_1,\ldots,l_d$. Let $B^o$ be its polar box, centered at the origin and with dimensions $1/l_1,\ldots,1/l_d$. Assume $\Pc$ is a finite subset of $B$. Then
		$$|\sum_{\xi\in \Pc}e(\xi\cdot x)|\sim |\Pc|$$ 
		for $x\in\frac1{100d}B^o$.	
	\end{lem}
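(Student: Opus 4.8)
The plan is to factor out the contribution of the center of $B$ and show that all the remaining phases are uniformly tiny on $\frac1{100d}B^o$, so that the sum behaves like a sum of $|\Pc|$ nearly-aligned unit vectors. First I would choose an orthonormal frame $v_1,\ldots,v_d$ adapted to the axes of $B$, so that $B=\xi_0+\{\sum_{j=1}^d t_jv_j:\ |t_j|\le l_j/2\}$ with $\xi_0$ the center of $B$, and correspondingly $B^o=\{\sum_{j=1}^d s_jv_j:\ |s_j|\le \frac{1}{2l_j}\}$. For $\xi\in\Pc$ write $\tilde\xi=\xi-\xi_0=\sum_j t_j(\xi)v_j$ with $|t_j(\xi)|\le l_j/2$. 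Since $e(\xi_0\cdot x)$ is unimodular, $|\sum_{\xi\in\Pc}e(\xi\cdot x)|=|\sum_{\xi\in\Pc}e(\tilde\xi\cdot x)|$, which removes the (potentially large) common phase.

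Next I would estimate the individual phases. For $x\in\frac1{100d}B^o$ we have $x=\sum_j s_jv_j$ with $|s_j|\le\frac{1}{200\,d\,l_j}$, hence for every $\xi\in\Pc$
\[ |\tilde\xi\cdot x|=\Big|\sum_{j=1}^d t_j(\xi)s_j\Big|\le\sum_{j=1}^d\frac{l_j}{2}\cdot\frac{1}{200\,d\,l_j}=\frac{1}{400}. \]
Consequently $\mathrm{Re}\,e(\tilde\xi\cdot x)=\cos(2\pi\,\tilde\xi\cdot x)\ge\cos\!\big(\tfrac{2\pi}{400}\big)>\tfrac12$ for all $\xi\in\Pc$, so
\[ \Big|\sum_{\xi\in\Pc}e(\xi\cdot x)\Big|=\Big|\sum_{\xi\in\Pc}e(\tilde\xi\cdot x)\Big|\ge\mathrm{Re}\sum_{\xi\in\Pc}e(\tilde\xi\cdot x)\ge\frac{|\Pc|}{2}, \]
while the triangle inequality gives the matching upper bound $|\sum_{\xi\in\Pc}e(\xi\cdot x)|\le|\Pc|$. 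This yields $|\sum_{\xi\in\Pc}e(\xi\cdot x)|\sim|\Pc|$ on $\frac1{100d}B^o$.

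There is essentially no serious obstacle here; the argument is a routine uncertainty-principle computation. The only point meriting a line of care is bookkeeping the normalization of the polar box $B^o$ (full side length versus half side length, axis-parallel versus tilted box), but since the computation only uses the containment $\frac1{100d}B^o\subset\{x:\ |\langle x,v_j\rangle|\le\frac{1}{100\,d\,l_j}\ \forall j\}$, any reasonable reading produces a phase bound comfortably below $\tfrac14$, which is all that the real-part trick requires; one may trade the constant $100d$ for any fixed multiple of $d$ at the expense of the implicit constant in $\sim$.
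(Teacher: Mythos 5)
Your proof is correct: the paper states this lemma without proof, treating it as a standard fact, and your argument (factor out the phase of the center of $B$, bound each residual phase $|\tilde\xi\cdot x|$ by a small absolute constant on $\frac1{100d}B^o$, then take real parts for the lower bound and the triangle inequality for the upper bound) is exactly the routine computation the authors have in mind. The numerics check out ($|\tilde\xi\cdot x|\le\frac1{400}$, so $\cos(2\pi\tilde\xi\cdot x)>\frac12$), so there is nothing to add.
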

	To apply the lemma, we use pigeonholing to find an arc $\tau$ on $\Gamma^d$ containing at least $|\Pc_R|R^{-1/d}\sim R^{\frac{2d}{p}-\frac1d}$ points in $\Pc_R$. It is easy to see that $\tau$ is a subset of a box $B$ with dimensions roughly $R^{-1/d}$, $R^{-2/d}$,..., $R^{-1}$. 
	We have the volume estimate $|B^o\cap [0,R]^d|\sim R^{\frac{d+1}{2}}$. 
	Using the lemma we find that 
	$$\frac{1}{R^d}\int_{[0,R]^d}|\sum_{\xi\in\Pc_R\cap \tau} e(\xi\cdot x)|^pdx\gtrsim R^{2d-\frac{p}d}R^{\frac{1-d}{2}}.$$ 
	Testing \eqref{ckljfhreughrtuifirst} with $a_\xi=1_{\Pc_R\cap\tau}(\xi)$ shows that $p\ge d(d+1)$.
	\medskip
	
	\textbf{Example 2:} 
	Assume $\Mc$ is a hyper-surface ($m=d-1$) with $1\le k\le d-1$ everywhere nonzero principal curvatures. It was proved in \cite{Gre} that \eqref{E3} holds for $q\ge \frac{2(k+2)}{k}$. We specialize this to the cases of most interest.   
	\smallskip
	
	When $\Mc$ has non-zero Gaussian curvature ($k=d-1$),  Question Q1 has a positive answer when $p\ge \frac{2(d+1)}{d-1}$.  Moreover, a similar application of Lemma \ref{lci}, with $B$ a box of dimensions $R^{-1/2}\times \ldots\times R^{-1/2}\times R^{-1}$,  shows that Q1 has a negative answer when $p<\frac{2(d+1)}{d-1}$. 
	\smallskip
	
	When $\Mc$ is the cone ($\Pi(\eta)=|\eta|$, $k=d-2$),  Question Q1 has a positive answer when $p\ge \frac{2d}{d-2}$. Moreover,  Lemma \ref{lci}, with $B$ a box of dimensions $R^{-1/2}\times \ldots\times R^{-1/2}\times 1\times R^{-1}$,  shows that Q1 has a negative answer when $p<\frac{2d}{d-2}$. 
	\smallskip
	
	When $\Mc$ is a hyperplane, \eqref{E3} is false for each $q<\infty$. And indeed, Question Q1 has a negative answer for each $2<p<\infty$.
	\medskip
	
	We conclude this section with some clarifying remarks regarding the limiting  distribution of $\Pc_R$ on $\Mc$. 	Inequality \eqref{E3} may be reformulated as follows. Let $\nu$ be the measure on $\Mc$ that is the pullback of the Lebesgue measure on $[0,1]^m$. Then for each continuous $g$ on $\Mc$ we have
	\begin{equation}
		\label{equiv}	
		\|\widehat{gd\nu}\|_{L^q(\R^d)}\lesssim \|g\|_{L^2(d\nu)}.
	\end{equation}
	We have the following partial converse of Proposition \ref{4ot095gtk1}.
	\begin{pr}\label{okokooooko}
		Assume Question Q1 has a positive answer for some $p>2$. Then there is a Borel probability  measure
		$\nu$ on $\Mc$ such that \eqref{equiv} holds with $q=p$. 
	\end{pr}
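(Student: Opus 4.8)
The plan is a weak-$\ast$ compactness argument: realize $\nu$ as a subsequential weak-$\ast$ limit of the normalized counting measures on the sets $\Pc_R$ furnished by Question~Q1, and pass the discrete inequality \eqref{ckljfhreughrtuifirst} to the limit. First I would set up the transfer. Fix a continuous $g\colon\Mc\to\C$ and apply \eqref{ckljfhreughrtuifirst} with $a_\xi=g(\xi)$. Writing $\mu_R=|\Pc_R|^{-1}\sum_{\xi\in\Pc_R}\delta_\xi$, a Borel probability measure on the compact set $\Mc$, one has $\sum_{\xi\in\Pc_R}g(\xi)e(\xi\cdot x)=|\Pc_R|\,\widehat{g\,d\mu_R}(-x)$ and $\sum_{\xi\in\Pc_R}|g(\xi)|^2=|\Pc_R|\,\|g\|_{L^2(d\mu_R)}^2$, so \eqref{ckljfhreughrtuifirst} reads
$$\int_{[0,R]^d}|\widehat{g\,d\mu_R}(-x)|^p\,dx\lesssim R^d|\Pc_R|^{-p/2}\,\|g\|_{L^2(d\mu_R)}^p\sim \|g\|_{L^2(d\mu_R)}^p,$$
the last step using the hypothesis $|\Pc_R|\sim R^{2d/p}$; the implicit constant is uniform in $R$ (in fact only a sequence $R_j\to\infty$ is needed). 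Since \eqref{ckljfhreughrtuifirst} continues to hold, with the same constant, when $g$ is replaced by any unimodular modulation $\xi\mapsto g(\xi)e(\xi\cdot c)$, and modulation on the frequency side amounts to translating $\widehat{g\,d\mu_R}$, the estimate over $[0,R]^d$ self-improves to one over an arbitrary axis-parallel cube of side $R$, hence over the ball $B(0,R/2)$:
\begin{equation}\label{eq:transfer}
\|\widehat{g\,d\mu_R}\|_{L^p(B(0,R/2))}\lesssim\|g\|_{L^2(d\mu_R)},\qquad\text{uniformly in }R.
\end{equation}

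Next I would extract the limit. By Banach--Alaoglu (and separability of $C(\Mc)$), along a subsequence $\mu_R\to\nu$ weak-$\ast$; since each $\mu_R$ is a probability measure supported on the compact set $\Mc$, so is $\nu$. For continuous $g$, $\|g\|_{L^2(d\mu_R)}\to\|g\|_{L^2(d\nu)}$, and for each fixed $x$ the function $\xi\mapsto g(\xi)e(-\xi\cdot x)$ is continuous on $\Mc$, so $\widehat{g\,d\mu_R}(x)\to\widehat{g\,d\nu}(x)$. To convert \eqref{eq:transfer} into a bound on all of $\R^d$, fix $N\ge1$; for $R>2N$ the left side of \eqref{eq:transfer} dominates $\|\widehat{g\,d\mu_R}\|_{L^p(B(0,N))}$. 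The family $\{\xi\mapsto g(\xi)e(-\xi\cdot x):x\in B(0,N)\}$ is uniformly bounded and equicontinuous on $\Mc$ (its modulus of continuity is controlled by $N$, $\|g\|_\infty$ and the modulus of continuity of $g$), hence precompact in $C(\Mc)$ by Arzel\`a--Ascoli, and weak-$\ast$ convergence of measures is uniform over a precompact set of test functions; therefore $\widehat{g\,d\mu_R}\to\widehat{g\,d\nu}$ uniformly on $B(0,N)$. Consequently
$$\|\widehat{g\,d\nu}\|_{L^p(B(0,N))}=\lim_{R\to\infty}\|\widehat{g\,d\mu_R}\|_{L^p(B(0,N))}\lesssim\limsup_{R\to\infty}\|g\|_{L^2(d\mu_R)}=\|g\|_{L^2(d\nu)}.$$
Letting $N\to\infty$ and invoking monotone convergence yields $\|\widehat{g\,d\nu}\|_{L^p(\R^d)}\lesssim\|g\|_{L^2(d\nu)}$, i.e.\ \eqref{equiv} with $q=p$.

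I expect the only genuine obstacle to be this passage from the scale-$R$ inequality \eqref{eq:transfer} to a bound on $\R^d$: the point is that on each fixed ball the relevant oscillatory test functions form an equicontinuous family, so the weak-$\ast$ convergence is uniform there and the $L^p$ norms over $B(0,N)$ converge. Everything else is bookkeeping, whose one salient feature is that the cardinality constraint $|\Pc_R|\sim R^{2d/p}$ in Question~Q1 is exactly the normalization making \eqref{eq:transfer} independent of $R$.
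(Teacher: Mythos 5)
Your proposal is correct and follows essentially the same route as the paper: take a weak-$\ast$ limit point $\nu$ of the normalized counting measures $|\Pc_R|^{-1}\sum_{\xi\in\Pc_R}\delta_\xi$, transfer \eqref{ckljfhreughrtuifirst} with $a_\xi=g(\xi)$ using the normalization $|\Pc_R|\sim R^{2d/p}$, pass to the limit on fixed compact sets, and exhaust $\R^d$. The only differences are presentational: you justify the passage to centered balls via modulation invariance and the convergence of $L^p$ norms via Arzel\`a--Ascoli (where dominated convergence on a fixed compact set would also do), steps the paper leaves implicit.
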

	\begin{proof}
		Let $\nu$ be any weak*-limit point of the sequence of discrete measures on $\Mc$
		$$\nu_R=\frac1{|\Pc_R|}\sum_{\xi\in\Pc_R}\delta_\xi.$$
		It suffices to prove  that for each compact set $K\subset \R^d$
		$$\int_K|\widehat{gd\nu}|^p\lesssim \|g\|_{L^2(d\nu)}^p,$$
		with implicit constant independent of $g$, $K$ and $R$. Since 	 $K\subset [0,R]^d$ for large enough $R$, we have
		\begin{align*}
			\int_K|\widehat{gd\nu}|^p&=\lim_{R\to\infty} \frac{1}{|\Pc_R|^{p}}\int_K|\sum_{\xi\in\Pc_R}\widehat{g(\xi)\delta_{\xi}}|^p\sim\lim_{R\to\infty}\frac{1}{R^{2d}}\int_K|\sum_{\xi\in\Pc_R}g(\xi)e(\xi\cdot x)|^p dx\\&\le \lim_{R\to\infty}\frac{1}{R^{2d}}\int_{[0,R]^d}|\sum_{\xi\in\Pc_R}g(\xi)e(\xi\cdot x)|^p dx\lesssim \lim_{R\to\infty}\frac{1}{R^{d}}(\sum_{\xi\in\Pc_R}|g(\xi)|^2)^{p/2}\\ &\sim \lim_{R\to\infty}\|g\|_{L^2(d\nu_R)}^p=\|g\|_{L^2(d\nu)}^p.	
		\end{align*}
		
	\end{proof}
	When $\Mc$ is a hypersurface with non-zero Gaussian curvature and $p=\frac{2(d+1)}{d-1}$, each limit measure $\nu$ is absolutely continuous with respect to the surface measure on $\Mc$. This can be seen as a measure of non-concentration. Indeed, constructive interference shows that for each ball $B_r\subset \R^d$ of radius $r\ge R^{-1/2}$ we have
	$$\nu_R(B_r)\lesssim r^{d-1}.$$ 
	It follows that $\nu(B_r)\lesssim r^{d-1}$ for each $r>0$, and this shows that $\nu$ must be absolutely continuous with respect to the surface measure on $\Mc$. This is not necessarily the case for larger values of $p$. We illustrate this with the following example. The moment curve $\Gamma^3$ is contained in the hyperbolic paraboloid $\H$, see \eqref{poifuigut8hurt89}. Thus, in light of our Example 1 discussed above, tight decoupling may be achieved for $p\ge 12$ with $\Pc_R$ supported on $\Gamma^3$. All corresponding limit measures will be supported on $\Gamma^3$, thus being singular with respect to the surface measure of $\H$.
	
	The same computations show that for each $p\ge \frac{2(d+1)}{d-1}$, each limit measure (for a given hypersurface $\Mc$) satisfies 
	\begin{equation}
		\label{poeifreuf8ure8gu}
		\nu(B_r)\lesssim r^{\alpha},
	\end{equation}
	with $\alpha=\frac{2(d+1)}{p}$. Using this we observe the following. Given $0<\alpha\le d-1$ we let 
	$$C(\alpha)=\min\{q:\;\exists\; \nu\text{ probability measure on }\Mc\text { such that }\eqref{equiv}\text{ and }\eqref{poeifreuf8ure8gu}\text{ hold}\}.$$We have showed in Proposition \ref{okokooooko} that $C(\alpha)\le 2(d+1)/\alpha$. Let us prove that $C(\alpha)\ge 2(d+1)/\alpha$. The argument in Proposition 3.1 in \cite{Mi} shows that for any measure satisfying \eqref{poeifreuf8ure8gu} there is a sequence of balls $B_r$ with radii shrinking to zero, so that  $\nu(B_r)\sim r^{\alpha}.$ Let $g=1_{B_r\cap \Mc}$. Note that due to constructive interference
	$$|\widehat{gd\nu}(x)|\sim \nu(B_r\cap \Mc),$$
	for each $x\in (100d)^{-1}B$, where $B$ is a rectangular box  centered at the origin, with dimensions $r^{-1}\times\ldots\times r^{-1}\times r^{-2}$. Testing \eqref{equiv} with $g$ shows that $r^{\alpha/2}\lesssim r^{(d+1)/p}$, forcing $p\ge 2(d+1)/\alpha$. 
	
	Similar questions have been considered earlier for arbitrary measures, not necessarily supported on a given manifold. See \cite{LW} for  sharp results away from the endpoint.

	\begin{re}
		\label{fkroigr9ig9ti90}
		For each $p\ge \frac{2(d+1)}{d-1}$, a generic choice for $\Pc_R$ has all associated limit measures $\nu$ equivalent with  the surface measure on $\Mc$. This is a manifestation of the ``uniform distribution'' of $\Pc_R$, as $R\to\infty$. To prove this,  let $\Delta_R\to 0$ be such that \begin{equation}
			\label{ruioirougiotugiortu}
			\lim_{R\to\infty}\Delta_R^{d-1}e^{c\Delta_R^{d-1}|\Pc_R|}=\infty.
		\end{equation}
		Partition $\Mc$ into $\sim \Delta_R^{-(d-1)}$
		caps $\mathcal C\in\mathcal C_R$ with diameter $\sim \Delta_R$. We select the points $\Pc_R$ with equal probability $\delta_R=|\Pc_R|/R^{d-1}\sim R^{2d/p}/R^{d-1}$.
		The Chernoff-Hoeffding inequality \eqref{CH} shows that for each $\mathcal C$, 
		$$\P(|\mathcal C\cap \Pc_R|\sim \delta_R(\Delta_R^{d-1} R^{d-1}))\ge 1-e^{-c\Delta_R^{d-1}|\Pc_R|}.$$
		Combining this with \eqref{ruioirougiotugiortu} we get 
		$$\lim_{R\to \infty}\P(|\mathcal C\cap \Pc_R|\sim \delta(\Delta_R^{d-1} R^{d-1})\text{ for each }\mathcal C)=1.$$
		Due to \eqref{4rioiti5i9yi} we in fact have
		$$\P(|\mathcal C\cap \Pc_R|\sim \delta(\Delta_R^{d-1} R^{d-1})\text{ for each }\mathcal C, \text{ and } K_p(R\Pc_R)\sim 1)\sim 1,$$
		with the implicit similarity constant independent of $R$.
		It follows that $\nu_R(\mathcal C)\sim \Delta_R^{d-1}$
		for each $\mathcal C\in\mathcal C_R$. Thus, $\nu_R(B_r)\sim r^{d-1}$ for each ball $B_r$ centered on $\Mc$ with radius $1\ge r\gg \Delta_R$. Letting $R\to\infty$ we find that $\nu(B_r)\sim r^{d-1}$ for each $r\in(0,1]$.
	\end{re}

	\section{decoupling estimates; deterministic examples}
	\label{s3}
	
	The examples constructed in the previous section are non-deterministic. In this section, we introduce an alternative approach, that relies on $l^2$ decoupling. The purpose is two-fold. On the one hand, this approach will lead us to some deterministic examples, albeit with $R^\epsilon$ losses. On the other hand, $l^2$ decoupling offers a new perspective, that poses interesting challenges and motivates us to strengthen Bourgain's original probabilistic estimate. We hope that comparing the two methods will shed additional light on tight decoupling, and will stimulate further research on the topic.

	\begin{te}
		\label{t1}	
		Let $p_d= \frac{2(d+1)}{d-1}$. Consider a smooth compact hypersurface $\Sigma \in \R^d$
		$$\Sigma=\{(\eta,\Pi(\eta)):\;\eta\in[0,1]^{d-1}\},$$ 
		with positive principal  curvatures (positive second fundamental form). Then for each  $R\gg 1$  there is a set $\Pc_R\subset \Sigma$ with cardinality $\sim R^{2d/p_d}$  such that 
		\begin{equation}
			\label{ehyp}
			\frac{1}{R^d}\int_{[0,R]^d}|\sum_{\xi\in\Pc_R}a_\xi e(\xi\cdot x)|^{p_d}dx\les \|a_\xi\|_{l^2}^{p_d}
		\end{equation}
		holds for arbitrary  $a_\xi\in\C$.
	\end{te}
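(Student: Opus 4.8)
The plan is to deduce \eqref{ehyp} from $l^2$ decoupling for $\Sigma$ at the critical exponent $p_d$ together with a single ``flat'' $\Lambda(p_d)$-set, using the curvature of $\Sigma$ only to organize the frequencies into caps at the decoupling scale $R^{-1/2}$.

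\emph{Construction of $\Pc_R$.} Partition $[0,1]^{d-1}$ into the $\sim R^{(d-1)/2}$ cubes $Q$ of side $R^{-1/2}$, with lower corners $\eta_Q$. Applying Theorem \ref{tB} to the orthonormal system $\{e(n\cdot w):\ n\in\{1,\ldots,\lfloor\sqrt R\rfloor\}^{d-1}\}$ on $[0,1]^{d-1}$ produces $S_0\subset\{1,\ldots,\lfloor\sqrt R\rfloor\}^{d-1}\cap\Z^{d-1}$ with $K_{p_d}(S_0)\lesssim 1$ and $|S_0|\sim R^{(d-1)/p_d}$, with $R$-independent constants. Set
$$\Pc_R=\bigcup_Q\Big\{\big(\eta_Q+\tfrac nR,\ \Pi(\eta_Q+\tfrac nR)\big):\ n\in S_0\Big\}.$$
Since $|n|/R\le R^{-1/2}$ the parameters $\eta_Q+\tfrac nR$ stay in $Q\subset[0,1]^{d-1}$, so $\Pc_R\subset\Sigma$; the points are $\gtrsim 1/R$-separated, and multiplying the number of caps by $|S_0|$ gives $|\Pc_R|\sim R^{(d-1)/2}R^{(d-1)/p_d}=R^{2d/p_d}$.

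\emph{Decoupling and the per-cap bound.} Write $g=\sum_{\xi\in\Pc_R}a_\xi e(\xi\cdot x)$ and let $g_Q$ be the partial sum over the points coming from $Q$; its frequencies lie on the cap $\theta_Q=\{(\eta,\Pi(\eta)):\eta\in Q\}$, hence in the adapted $R^{-1/2}\times\cdots\times R^{-1/2}\times R^{-1}$ plank. Since $\Sigma$ has positive definite second fundamental form, the Bourgain--Demeter $l^2$ decoupling theorem \cite{BD} applies at $p=p_d$ and yields, with $w_{B_R}\gtrsim 1_{[0,R]^d}$ a standard rapidly decaying weight,
$$\|g\|_{L^{p_d}([0,R]^d)}\les\Big(\sum_Q\|g_Q\|_{L^{p_d}(w_{B_R})}^2\Big)^{1/2}.$$
The crucial point is that within a single cap the curvature of $\Sigma$ contributes only a modulus-preserving modulation: expanding $\Pi$ to first order at $\eta_Q$,
$$\big(\eta_Q+\tfrac nR,\ \Pi(\eta_Q+\tfrac nR)\big)\cdot x=\xi_Q\cdot x+n\cdot w_Q(x)+\psi_Q(n,s(x)),$$
where $\xi_Q=(\eta_Q,\Pi(\eta_Q))$, $w_Q(x)_i=\tfrac1R(x_i+\partial_i\Pi(\eta_Q)x_d)$, $s(x)=x_d/R$, and $\psi_Q$ is real-valued. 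After the linear change of variables $x\mapsto(w,s)=(w_Q(x),s(x))$ (Jacobian $R^{-d}$), using $\Z^{d-1}$-periodicity in $w$ so that the pushed-forward weight sums to $\lesssim 1$ over the period lattice, and using $K_{p_d}(S_0)\lesssim 1$ together with $|a_n e(\psi_Q(n,s))|=|a_n|$, one obtains
$$\|g_Q\|_{L^{p_d}(w_{B_R})}^{p_d}\lesssim R^d\int_\R\!\!\int_{[0,1]^{d-1}}\Big|\sum_{n\in S_0}a_n e(\psi_Q(n,s))e(n\cdot w)\Big|^{p_d}\!dw\,ds\lesssim R^d\,\|(a_\xi)_{\xi\in\theta_Q}\|_{l^2}^{p_d}.$$
Plugging this into the decoupling inequality and using that the caps partition $\Pc_R$, $\sum_Q\|g_Q\|_{L^{p_d}(w_{B_R})}^2\lesssim R^{2d/p_d}\|(a_\xi)\|_{l^2}^2$, hence $\|g\|_{L^{p_d}([0,R]^d)}^{p_d}\les R^d\|(a_\xi)\|_{l^2}^{p_d}$, which is \eqref{ehyp}.

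I expect the main obstacle to be conceptual rather than technical: realizing that at scale $R^{-1/2}$ the curvature enters only through a unimodular modulation of the coefficients, so that one and the same flat $\Lambda(p_d)$-set (translated into every cap) does the job, and that placing the frequencies on the $\tfrac1R$-lattice makes the passage from the average over $[0,R]^d$ to the average over $[0,1]^{d-1}$ lossless. The only nontrivial external input is the sharp $l^2$ decoupling for positively curved hypersurfaces, with its $R^\epsilon$-loss; everything else is elementary. When $p_d$ is an even integer, Theorem \ref{tB} in the construction step can be replaced by an explicit $\Lambda(p_d)$-set built from Rudin's one-dimensional construction via a digit map (the image of a one-dimensional $\Lambda(p_d)$-set is again a $\Lambda(p_d)$-set in $\Z^{d-1}$), yielding a deterministic $\Pc_R$.
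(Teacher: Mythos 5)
Your proposal is correct and takes essentially the same route as the paper: flat $\Lambda(p_d)$-sets produced by Theorem \ref{tB} at cap scale $R^{-1/2}$, recombined via Bourgain--Demeter $l^2$ decoupling at the critical exponent (the paper's Lemma \ref{l1} selects a set inside each cap and lifts it to $\Sigma$ by simply writing $e((\eta,\Pi(\eta))\cdot x)=e(\eta\cdot\bar x)e(\Pi(\eta)x_d)$ and absorbing the unimodular factor into the coefficients for fixed $x_d$, which avoids your Taylor expansion and change of variables, while your translation of a single set $S_0$ into every cap is an equally valid minor variant). The only slip is cosmetic: in the per-cap display you must retain the weight's rapid decay in the variable $s=x_d/R$ (or restrict $s$ to a bounded interval), since the unweighted integral $\int_\R ds$ of a quantity bounded below by $\|a_\xi\|_{l^2}^{p_d}$ diverges; with the weight kept, the bound and the conclusion stand.
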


	We will use two ingredients.  
	\begin{lem}
		\label{l1}	
		Let $p\ge 2$. Each set $\Pc$ of  $N$ points in $(R^{-1}\Z)^{d-1}$ contains a subset $S$ with $|S|\sim N^{2/p}$ such that	
		\begin{equation}
			\label{e1}
			\frac{1}{R^d}\int_{[0,R]^d}|\sum_{\eta\in S}a_\eta e((\eta,\Pi(\eta))\cdot x)|^pdx\lesssim \|a_\eta\|_{l^2}^p\end{equation}
		holds for arbitrary $[0,R]^d$ and $a_\eta\in\C$.	The implicit constant  is independent of $R$, $N$, $a_\eta$.
	\end{lem}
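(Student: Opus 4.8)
The plan is to deduce \eqref{e1} directly from Theorem~\ref{mi3} applied with $q=2$, after rescaling the cube $[0,R]^d$ to the unit cube. First I would substitute $x=Ry$: since $(\eta,\Pi(\eta))\cdot x=(\eta,\Pi(\eta))\cdot(Ry)=R(\eta,\Pi(\eta))\cdot y$, the left-hand side of \eqref{e1} equals
\[
\int_{[0,1]^d}\Big|\sum_{\eta\in S}a_\eta\, e\big(R(\eta,\Pi(\eta))\cdot y\big)\Big|^p\,dy ,
\]
so the relevant frequencies are the dilated vectors $s_\eta:=R(\eta,\Pi(\eta))$, $\eta\in\Pc$. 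Because $\eta\in(R^{-1}\Z)^{d-1}$, the first $d-1$ coordinates of $s_\eta$ lie in $\Z^{d-1}$; hence for distinct $\eta,\eta'\in\Pc$ the vectors $s_\eta,s_{\eta'}$ are distinct and their difference has an integer coordinate, which forces $\int_{[0,1]^d}e\big((s_\eta-s_{\eta'})\cdot y\big)\,dy=0$. Thus $\Phi:=\{e(s_\eta\cdot y):\eta\in\Pc\}$ is an orthonormal system of $N$ functions on $[0,1]^d$, uniformly bounded by $1$, and in particular $K_2(\Phi)=1$ by Plancherel.

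For $p>2$ I would then apply Theorem~\ref{mi3}(a) to $\Phi$ with $q=2$: it produces $\Psi\subset\Phi$ with $K_p(\Psi)\lesssim 1$ and $|\Psi|\sim K_2(\Phi)^{-4/p}|\Phi|^{2/p}=N^{2/p}$, the implicit constants depending only on $p$ and $d$. Letting $S\subset\Pc$ be the index set corresponding to $\Psi$ and unwinding the definition of $K_p(\Psi)$, then undoing the dilation $x=Ry$, gives precisely \eqref{e1} with a constant independent of $R$, $N$, and the coefficients. The endpoint $p=2$ is trivial: there $N^{2/p}=N$, so one simply takes $S=\Pc$, and \eqref{e1} is just the orthonormality of $\Phi$ established above; the growth hypothesis in Theorem~\ref{mi3}(b) is not needed.

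I do not expect a genuine obstacle here: the whole content of the lemma is carried by Theorem~\ref{mi3}, and the only points requiring care are the two pieces of bookkeeping above — that the dilation $x=Ry$ matches the normalization $R^{-d}\int_{[0,R]^d}$ with the $L^p([0,1]^d)$ norm entering the definition of $K_p$, and that the hypothesis $\Pc\subset(R^{-1}\Z)^{d-1}$ is exactly what turns the dilated frequencies $s_\eta$ into an orthonormal system on $[0,1]^d$. This last observation is also the precise point at which the statement uses no curvature of $\Pi$ whatsoever, which is why it holds for an arbitrary graph $(\eta,\Pi(\eta))$.
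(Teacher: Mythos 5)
Your proof is correct, and it runs on the same engine as the paper's: Bourgain's selection theorem for bounded orthonormal systems of exponentials (Theorem~\ref{tB}, equivalently Theorem~\ref{mi3}(a) with $q=2$), with the curvature of $\Pi$ playing no role. The packaging differs slightly. You rescale to $[0,1]^d$ and apply the selection theorem directly to the $d$-dimensional lifted system $\{e(R(\eta,\Pi(\eta))\cdot y)\}$, which requires checking that this lifted family is still orthonormal on the unit cube; here be a bit more careful than "the difference has an integer coordinate": what you need is a \emph{nonzero} integer coordinate, which indeed holds because distinct $\eta,\eta'\in(R^{-1}\Z)^{d-1}$ differ in one of the first $d-1$ coordinates, and that single vanishing factor kills the product integral regardless of the (generally non-integer) last coordinate $R(\Pi(\eta)-\Pi(\eta'))$. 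The paper instead applies Theorem~\ref{tB} to the projected $(d-1)$-dimensional system $\{e(R\eta\cdot\bar x)\}$ on $[0,1]^{d-1}$, whose orthonormality is immediate, and then lifts to $d$ dimensions by choosing coefficients $b_\eta=a_\eta e(\Pi(\eta)x_d)$ (unimodular modulation, so $\|b_\eta\|_{l^2}=\|a_\eta\|_{l^2}$) and integrating in $x_d$ via Fubini. The two reductions are interchangeable: yours avoids the Fubini/modulation step at the cost of the orthogonality check on the lifted frequencies, the paper's avoids that check at the cost of the extra freezing argument; both yield constants depending only on $p$ and $d$, and your separate treatment of the trivial endpoint $p=2$ (where Theorem~\ref{tB} is not stated) is a sensible small addition.
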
\begin{proof}
		Write $x=(\bar{x},x_d)\in\R^{d-1}\times \R$.	
		The functions $(e(R\eta\cdot\bar{x}))_{\eta\in\Pc}$ form an orthonormal system on $[0,1]^{d-1}$. 	
		We use Theorem \ref{tB} to find $S$ with $|S|\sim N^{2/p}$ such that for each $b_\eta\in\C$ 
		$$\frac{1}{R^{d-1}}\int_{[0,R]^{d-1}}|\sum_{\eta\in S}b_\eta e(\eta\cdot \bar{x})|^pd\bar{x}\lesssim \|b_\eta\|_{l^2}^p.$$
		We then use this inequality with $b_\eta=a_\eta e(\Pi(\eta)x_d)$ and integrate over $x_d\in[0,R]$. This gives \eqref{e1}.
		
	\end{proof}
	We  also need the $l^2$ decoupling for $\Sigma$ proved in \cite{BD}.
	\begin{te}
		\label{tbd}	
		Let $R\gg 1$. 	
		Consider a partition of $\Sigma$ into square-like caps $\theta$ with diameter $\sim R^{-1/2}$. Given a (complex) measure $\mu$ supported on $\Sigma$ we denote by $\mu_\theta$ its restriction to $\theta$.
		
		We have
		$$\|\widehat{\mu}\|_{L^{\frac{2(d+1)}{d-1}}([0,R]^d)}\les (\sum_{\theta}\|\widehat{\mu_\theta}\|_{L^{\frac{2(d+1)}{d-1}}([0,R]^d)}^2)^{1/2}.$$
	\end{te}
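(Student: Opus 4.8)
The plan is to deduce Theorem \ref{tbd} from the Bourgain--Demeter $l^2$ decoupling theorem for the paraboloid \cite{BD}, after three reductions: localizing and affinely normalizing $\Sigma$, passing from the complex measure $\mu$ to a function with Fourier support in a thin neighborhood of $\Sigma$, and absorbing the perturbation of $\Sigma$ away from an exact paraboloid into the induction on scales.

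\textbf{Step 1: localization and affine normalization.} Since $\Sigma$ is compact, smooth and has positive second fundamental form, there is a small constant $c_0=c_0(\Pi)$ so that $[0,1]^{d-1}$ is covered by $O(1)$ cubes $Q$ of side $c_0$ over each of which the following holds. Translating so that $(\eta_Q,\Pi(\eta_Q))$ sits at the origin, subtracting the tangent plane at $\eta_Q$, and applying $\bar x\mapsto (\nabla^2\Pi(\eta_Q))^{-1/2}\bar x$ in the first $d-1$ variables, the portion of $\Sigma$ over $Q$ becomes a graph $\{(\zeta,|\zeta|^2+\psi(\zeta)):\ \zeta\in Q'\}$ with $\|\psi\|_{C^2(Q')}$ small and $Q'$ a cube of side $\sim c_0$. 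These are affine maps of $\R^d$; up to constants depending only on $d$ and $\Pi$ they preserve $L^{p_d}$ norms on translates of $[0,R]^d$, carry $R^{-1/2}$-caps to $R^{-1/2}$-caps, and act linearly on complex measures supported on $\Sigma$. As there are $O(1)$ cubes $Q$, the triangle inequality together with Cauchy--Schwarz reduces matters to proving the decoupling, with an $R$-independent implicit constant, for a single such normalized perturbed cap.

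\textbf{Step 2: from measures to thin neighborhoods.} Fix a Schwartz $\chi$ with $\chi\ge 1_{[0,1]^d}$ and $\supp\widehat{\chi}\subset B(0,1/10)$, and set $\chi_R(x)=\chi(x/R)$. Then $G:=\widehat{\mu}\,\chi_R$ has $\widehat{G}=\widehat{\mu}*\widehat{\chi_R}$ supported in the $\sim R^{-1}$-neighborhood $\Nc_\Pi(R^{-1})$ of $\Sigma$, while $G=\widehat{\mu}$ wherever $\chi_R=1$, so $|G|\ge |\widehat{\mu}|$ on $[0,R]^d$; likewise $G_\theta:=\widehat{\mu_\theta}\,\chi_R$ has Fourier support in the $\sim R^{-1}$-plate over $\theta$, and since $\widehat{G_\theta}$ has bounded support, $G_\theta$ is locally constant at scale $1$, giving $\|G_\theta\|_{L^{p_d}(\R^d)}\lesssim \|\widehat{\mu_\theta}\|_{L^{p_d}([0,R]^d)}$ up to rapidly decaying tails that are harmless after summing in $\theta$. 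Thus it suffices to establish
$$\|G\|_{L^{p_d}(\R^d)}\les\Big(\sum_{\theta}\|G_\theta\|_{L^{p_d}(\R^d)}^2\Big)^{1/2}$$
for every $G$ with $\widehat{G}$ supported in $\Nc_\Pi(R^{-1})$ --- the standard Fourier-analytic form of $l^2$ decoupling at the critical exponent $p_d=\frac{2(d+1)}{d-1}$, which is exactly where the $R^\epsilon$ loss, and no better, is expected.

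\textbf{Step 3 and the main obstacle.} One now applies the Bourgain--Demeter theorem. Everything up to here is routine bookkeeping; the substantive input is the decoupling theorem itself, and the only genuine subtlety in using it for a general curved $\Sigma$ rather than the exact paraboloid is confirming that the class of surfaces $\{(\zeta,|\zeta|^2+\psi(\zeta))\}$ with $\|\psi\|_{C^2}$ small and uniformly elliptic second fundamental form is stable, with uniform quantitative constants, under the parabolic rescalings of the induction: rescaling a size-$\delta$ subcap to unit scale leaves the Hessian unchanged and only improves the lower-order data, so the perturbation stays of the same type with the same control. Hence the Bourgain--Demeter argument --- broad--narrow decomposition, the multilinear Kakeya / Bennett--Carbery--Tao inequality, ball inflation, and induction on scales --- runs with constants uniform over this class, exactly as in \cite{BD}, which in fact already states $l^2$ decoupling in the generality of hypersurfaces with nonvanishing Gaussian curvature. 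Combining this with Steps 1--2 yields Theorem \ref{tbd}; in practice one simply quotes \cite{BD} in the stated generality.
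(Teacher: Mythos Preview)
The paper gives no proof of Theorem \ref{tbd}; it is simply quoted from \cite{BD} as a known result. Your sketch --- affine normalization to a perturbed paraboloid, passage from measures to functions with Fourier support in an $R^{-1}$-slab, and the stability of the Bourgain--Demeter induction on scales under parabolic rescaling --- is the standard route and is exactly how \cite{BD} already covers general hypersurfaces with positive definite second fundamental form, so you are unpacking what the paper takes as a black box.
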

	
	In our application, $\mu$ will be a linear combination of Dirac deltas.
	\begin{proof}(of Theorem \ref{t1})

		Let $\Theta$ be a partition of $[0,1]^{d-1}$ into cubes $\theta$ with side length $\sim R^{-1/2}$. Since $|\theta\cap (R^{-1}\Z)^{d-1}|\sim R^{\frac{d-1}{2}}$,  we may use Lemma \ref{l1} to create a set $S_\theta\subset \theta$ with size $\sim (R^{\frac{d-1}{2}})^{2/p_d}$
		such that, writing $\Pc_R(\theta)=\{(\eta,\Pi(\eta)):\;\eta\in S_\theta\}$, we have for each $b_\xi$
		\begin{equation}
			\label{e2}
			\frac{1}{R^d}\int_{[0,R]^d}|\sum_{\xi\in \Pc_R(\theta)}b_\xi e(\xi\cdot x)|^{p_d}dx\lesssim \|b_\xi\|_{l^2(\Pc_R(\theta))}^{p_d}.
		\end{equation}
		We let $$\Pc_R=\bigcup_{\theta\in\Theta}\Pc_R(\theta).$$
		Note first that our choice for $p_d$ forces $$|\Pc_R|\sim |\Theta|R^{\frac{d-1}{p_d}}\sim R^{\frac{2d}{p_d}}.$$ Fix $a_\xi$. Then  Theorem \ref{tbd} implies that
		\begin{equation}
			\label{e3}
			\int_{[0,R]^d}|\sum_{\xi\in\Pc_R}a_\xi e(\xi \cdot x)|^{p_d}dx\les \left(\sum_{\theta\in\Theta}(\int_{[0,R]^d}|\sum_{\xi\in \Pc_R(\theta)}a_\xi e(\xi\cdot x)|^{p_d}dx)^{2/p_d}\right)^{p_d/2}.
		\end{equation}	
		Now \eqref{ehyp} follows by first applying \eqref{e3}, then \eqref{e2} with $b_\xi=a_\xi$.
		
		This argument relies crucially on the fact that $p=p_d$ in two ways. First, this guarantees the right size for $\Pc_R$. Second, \eqref{e3} is no longer true for $p>p_d$.
		
	\end{proof}
	The critical index $p_d$ is an even integer when $d=2$ and $d=3$. For all even integers $p$, Rudin \cite{Ru} has constructed explicit examples of $\Lambda(p)$ sets with maximal size. To keep things simple and self-contained, and since our use of decoupling inevitably introduces $R^\epsilon$-losses, we will instead use an elegant, explicit construction of an almost $\Lambda(4)$-set of maximal size in two dimensions. 
	
	\begin{lem}
		We have for each $a_{n,m}\in \C$ and each $\epsilon>0$
		$$\|\sum_{n,m=1}^{N}a_{n,m}e(n^2x+m^2y)\|_{L^4([0,1]^2)}\lesssim_{\epsilon}N^{\epsilon}\|a_{n,m}\|_{l^2}.$$
	\end{lem}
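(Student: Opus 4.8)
The plan is to use that $p=4$ is an even integer, so everything reduces to counting additive quadruples. Writing $F(x,y)=\sum_{n,m=1}^{N}a_{n,m}e(n^2x+m^2y)$, I would start from $\|F\|_{L^4([0,1]^2)}^4=\|F^2\|_{L^2([0,1]^2)}^2$ and expand the square:
\[
F^2=\sum_{k,l\in\Z}c_{k,l}\,e(kx+ly),\qquad c_{k,l}=\sum_{\substack{n_1^2+n_2^2=k\\ m_1^2+m_2^2=l}}a_{n_1,m_1}a_{n_2,m_2},
\]
where the inner sum is over $1\le n_1,n_2,m_1,m_2\le N$. Since all the frequencies $(n_1^2+n_2^2,m_1^2+m_2^2)$ are genuine lattice points and $\{e(kx+ly):(k,l)\in\Z^2\}$ is orthonormal on $[0,1]^2$, Parseval gives $\|F\|_{L^4([0,1]^2)}^4=\sum_{k,l\in\Z}|c_{k,l}|^2$.

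Next I would estimate each coefficient by Cauchy--Schwarz. Let $r(k)$ be the number of ordered pairs $(a,b)$ with $1\le a,b\le N$ and $a^2+b^2=k$; then $c_{k,l}$ has at most $r(k)r(l)$ terms, so
\[
|c_{k,l}|^2\le r(k)\,r(l)\sum_{\substack{n_1^2+n_2^2=k\\ m_1^2+m_2^2=l}}|a_{n_1,m_1}|^2|a_{n_2,m_2}|^2.
\]
The essential arithmetic input — the analogue here of the decoupling inequalities used for general curved $\Sigma$ — is the classical divisor-type bound on the sum-of-two-squares function: the number of representations of an integer $k$ as a sum of two squares is $\lesssim_\epsilon k^\epsilon$. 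Since $k\le 2N^2$ in the relevant range, this yields $r(k)\lesssim_\epsilon N^\epsilon$, and likewise for $r(l)$, hence $r(k)r(l)\lesssim_\epsilon N^\epsilon$ uniformly.

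Finally I would sum over $k,l$, pulling out the uniform factor $N^\epsilon$ and noting that the remaining sum telescopes:
\[
\sum_{k,l\in\Z}\ \sum_{\substack{n_1^2+n_2^2=k\\ m_1^2+m_2^2=l}}|a_{n_1,m_1}|^2|a_{n_2,m_2}|^2=\sum_{n_1,n_2,m_1,m_2}|a_{n_1,m_1}|^2|a_{n_2,m_2}|^2=\|a_{n,m}\|_{l^2}^4,
\]
so $\|F\|_{L^4([0,1]^2)}^4\lesssim_\epsilon N^\epsilon\|a_{n,m}\|_{l^2}^4$, and taking fourth roots and relabelling $\epsilon$ finishes the proof. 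I do not expect a genuine obstacle: the argument is elementary once one invokes the bound $r_2(k)\lesssim_\epsilon k^\epsilon$, and the only points requiring mild care are confirming that all frequencies involved are integer lattice points so that Parseval applies, and that the coefficient $c_{k,l}$ is supported where both $k$ and $l$ are sums of two squares in the admissible range.
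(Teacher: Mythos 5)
Your proposal is correct and is essentially the paper's argument: both raise to the fourth power, use orthogonality of the integer frequencies, and reduce everything to the bound $r_2(k)\lesssim_\epsilon k^\epsilon$ for representations as a sum of two squares, which is the divisor-bound input the paper cites. Your Parseval--Cauchy--Schwarz formulation is just a slightly more explicit write-up of the same quadruple-counting step.
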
  
	\begin{proof}
		This is a well-known argument. 	
		We raise to the forth power, and analyze the 4-linear expression. The only contributing terms are those for which 
		$$\begin{cases}n_1^2+n_2^2=n_3^2+n_4^2\\m_1^2+m_2^2=m_3^2+m_4^2\end{cases}.$$
		Given $n_1,n_2,m_1,m_2$, the values of $n_3,n_4,m_3,m_4$ are determined up to $N^\epsilon$ many choices, due to the classical divisor bound. 
		
	\end{proof}
	\begin{co}
		Consider a smooth compact hypersurface in $\R^3$
		$$\Sigma=\{(\eta,\Pi(\eta)):\;\eta\in[0,1]^{2}\}$$ 
		with positive principal  curvatures. For each $R\gg 1$ consider the subset of $\Sigma$
		$$\Pc_R=\{(\eta,\Pi(\eta)),\;\eta=(\frac{i}{\sqrt{R}}+\frac{n^2}{R}, \frac{j}{\sqrt{R}}+\frac{m^2}{R}):\;0\le i,j<\sqrt{R}-1,\;1\le n,m<R^{1/4} \}$$
		with size $|\Pc_R|\sim R^{3/2}$. Then 
		$$
		\frac{1}{R^3}\int_{[0,R]^3}|\sum_{\xi\in\Pc_R}a_\xi e(\xi\cdot x)|^{4}dx\les \|a_\xi\|_{l^2}^{4}
		$$
		holds for arbitrary  $a_\xi\in\C$.
	\end{co}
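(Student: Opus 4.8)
The plan is to run the proof of Theorem~\ref{t1} in the case $d=3$, $p_3=4$, but to replace the probabilistic input (Lemma~\ref{l1}, which rests on Theorem~\ref{tB}) by the explicit, arithmetic almost-$\Lambda(4)$-set furnished by the preceding Lemma. Thus I will not use Theorem~\ref{tB} at all; the only non-elementary ingredients are the $\ell^2$ decoupling of $\Sigma$ (Theorem~\ref{tbd}, which is where the positive curvature hypothesis enters) and the divisor bound hidden inside the Lemma.

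First I would organize $\Pc_R$ by $R^{-1/2}$-caps. Let $\Theta$ be the partition of $[0,1]^2$ into the squares $\theta_{i,j}=[\tfrac{i}{\sqrt R},\tfrac{i+1}{\sqrt R}]\times[\tfrac{j}{\sqrt R},\tfrac{j+1}{\sqrt R}]$ with $0\le i,j<\sqrt R-1$, and set
$$\Pc_R(\theta_{i,j})=\Big\{(\eta,\Pi(\eta)):\ \eta=\big(\tfrac{i}{\sqrt R}+\tfrac{n^2}{R},\ \tfrac{j}{\sqrt R}+\tfrac{m^2}{R}\big),\ 1\le n,m<R^{1/4}\Big\}.$$
Since $n^2/R<R^{-1/2}$ whenever $n<R^{1/4}$, the set $\Pc_R(\theta)$ lies strictly inside the cap $\theta$ (viewed on $\Sigma$ via $\eta\mapsto(\eta,\Pi(\eta))$); the caps are pairwise disjoint, $|\Pc_R(\theta)|\sim R^{1/2}$, and there are $\sim R$ of them, so indeed $|\Pc_R|\sim R^{3/2}\sim R^{2d/p_3}$, as claimed. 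In particular the decoupling partition of Theorem~\ref{tbd} may be taken to be exactly $\{\theta\}_{\theta\in\Theta}$, and the restriction of $\mu=\sum_{\xi\in\Pc_R}a_\xi\delta_\xi$ to the cap $\theta$ is precisely $\sum_{\xi\in\Pc_R(\theta)}a_\xi\delta_\xi$.

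The heart of the matter is the per-cap, square-function-free estimate: for every $\theta\in\Theta$ and every choice of coefficients,
\begin{equation}\label{localbound}
\frac{1}{R^3}\int_{[0,R]^3}\Big|\sum_{\xi\in\Pc_R(\theta)}b_\xi e(\xi\cdot x)\Big|^4dx\lesssim_\epsilon R^\epsilon\|b_\xi\|_{l^2}^4 .
\end{equation}
To prove \eqref{localbound} I would follow the reduction in the proof of Lemma~\ref{l1}, invoking the Lemma directly in place of the selection step based on Theorem~\ref{tB}. Writing $x=(\bar x,x_3)\in\R^2\times\R$ and $\xi=(\eta,\Pi(\eta))$, the factor $e(\Pi(\eta)x_3)$ is unimodular, so after fixing $x_3$ it suffices to bound the $\bar x$-integral; expanding $R\eta\cdot\bar x=(i\sqrt R)\bar x_1+(j\sqrt R)\bar x_2+n^2\bar x_1+m^2\bar x_2$, the first two summands contribute a unimodular factor independent of $(n,m)$, which drops out, and what remains is $\sum_{n,m}c_{n,m}e(n^2\bar x_1+m^2\bar x_2)$ with $|c_{n,m}|=|b_\xi|$. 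Since $n^2,m^2\in\Z$ this integrand is $1$-periodic in each of $\bar x_1,\bar x_2$, so $\int_{[0,R]^2}(\cdot)\,d\bar x\lesssim R^2\int_{[0,1]^2}(\cdot)\,d\bar x$ (no integrality of $R$ is needed), and the Lemma with $N\sim R^{1/4}$ bounds the $L^4([0,1]^2)$-norm of the sum by $\lesssim_\epsilon R^\epsilon\|c_{n,m}\|_{l^2}$. Integrating back over $x_3\in[0,R]$ produces one more factor $R$, and dividing by $R^3$ gives \eqref{localbound}.

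Finally I would apply Theorem~\ref{tbd} at the critical exponent $\frac{2(d+1)}{d-1}=4$ to $\mu$, exactly as in \eqref{e3}:
$$\int_{[0,R]^3}\Big|\sum_{\xi\in\Pc_R}a_\xi e(\xi\cdot x)\Big|^4dx\les\Big(\sum_{\theta\in\Theta}\Big(\int_{[0,R]^3}\Big|\sum_{\xi\in\Pc_R(\theta)}a_\xi e(\xi\cdot x)\Big|^4dx\Big)^{1/2}\Big)^2.$$
Substituting \eqref{localbound} with $b_\xi=a_\xi$, each inner integral is $\lesssim_\epsilon R^{3+\epsilon}(\sum_{\xi\in\Pc_R(\theta)}|a_\xi|^2)^2$; taking square roots, summing the resulting series over the disjoint blocks $\Pc_R(\theta)$ (which reassembles $\|a_\xi\|_{l^2(\Pc_R)}^2$), squaring, and dividing by $R^3$ yields $\les\|a_\xi\|_{l^2}^4$, after renaming the small exponents. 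I do not expect a genuine obstacle here: the argument is a faithful copy of Theorem~\ref{t1}'s proof with explicit input. The only points that need care are (i) checking that the shifted progressions of squares stay inside their $R^{-1/2}$-caps, so that the combinatorial blocks and the decoupling caps coincide, and (ii) the modulation/periodization bookkeeping behind \eqref{localbound}; both are routine. As in Theorem~\ref{t1}, the exponent $4$ is doubly critical: it is what forces $|\Pc_R|\sim R^{3/2}$ to be maximal, and it is the exponent at which Theorem~\ref{tbd} holds.
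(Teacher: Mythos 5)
Your proposal is correct and is exactly the argument the paper intends (and leaves implicit): run the proof of Theorem \ref{t1} at $d=3$, $p_3=4$, replacing the probabilistic per-cap input of Lemma \ref{l1} by the explicit divisor-bound Lemma for $e(n^2x+m^2y)$ inside each $R^{-1/2}$-cap, and then decouple across caps via Theorem \ref{tbd}. The per-cap reduction (absorbing $e(\Pi(\eta)x_3)$ into the coefficients, dropping the common modulation $e((i\sqrt R,\,j\sqrt R)\cdot\bar x)$, and rescaling/periodizing to $[0,1]^2$) and the final bookkeeping are all as in the paper, so nothing further is needed.
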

	The best known examples of such $\Sigma$ are the sphere and the elliptic paraboloid. 
	
	An interesting contrast is provided by the hyperbolic paraboloid in $\R^3$
	\begin{equation}
		\label{poifuigut8hurt89}
		\H=\{(\eta_1,\eta_2,\eta_1\eta_2):\;(\eta_1,\eta_2)\in[0,1]^2\}.
	\end{equation}
	Its two principal curvatures have opposite signs.
	The method in this section fails to produce a deterministic example of an almost $\Lambda(4)$-set $\Pc_R\subset \H$ of maximal size $\sim R^{3/2}$, and we will shortly see why. Nevertheless, we are able to produce non-explicit examples. The argument combines an interesting decoupling recently proved in \cite{GMO} with a refinement of Bourgain's probabilistic estimate, which is of independent interest.  
	
	\begin{te}
		\label{last}	
		For each  $R\gg 1$  there is a set $\Pc_R\subset \H$ with cardinality $\sim R^{3/2}$  such that 
		$$
		\frac{1}{R^3}\int_{[0,R]^3}|\sum_{\xi\in\Pc_R}a_\xi e(\xi\cdot x)|^4dx\les \|a_\xi\|_{l^2}^4
		$$
		holds for arbitrary  $a_\xi\in\C$.
	\end{te}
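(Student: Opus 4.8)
The plan is to run the argument of Theorem \ref{t1}, but with two substitutions forced on us by the ruled structure of $\H$. I would first record why the method of Section \ref{s3} breaks down, since this dictates what must change. Running that argument produces a set of the form $\Pc_R=\{(\eta,\eta_1\eta_2):\eta=(\tfrac{i}{\sqrt R}+\tfrac{n^2}{R},\tfrac{j}{\sqrt R}+\tfrac{m^2}{R})\}$ — or any comparable deterministic, product-type input. But fixing $j$ and $m$ places $\sim R^{3/4}$ of the frequencies on a single line $\ell\subset\H$, along which the exponentials carry no curvature; since a length-$R$ arithmetic progression supports no $\Lambda(4)$-set of size larger than $\sim R^{1/2}$ (by the additive-energy lower bound), testing \eqref{ehyp} with $a_\xi=1_\ell$ forces $K_4(R\Pc_R)\gtrsim R^{1/8}$. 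The same example, unpacked, shows that the $l^2L^4$-decoupling of $\H$ into $R^{-1/2}$-caps — the engine of Theorem \ref{t1}, encoded in \eqref{e3} — simply fails for $\H$. So I must replace the decoupling, and also abandon deterministic inputs.

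For the decoupling I would invoke the decoupling of \cite{GMO} for $\H$, which is valid at the $L^4$ level with an $l^2$ right-hand side over a family of pieces $\{\tau\}$ adapted to the lines on $\H$ (and, where needed, under a non-concentration hypothesis on the frequency set of the type a well-spread $\Pc_R$ satisfies). Applied to $\Pc_R$ this reduces \eqref{ehyp} to: (i) $|\Pc_R|\sim R^{3/2}$; and (ii) $K_4\big(R(\Pc_R\cap\tau)\big)\lesssim 1$ for every piece $\tau$. I would then build $\Pc_R$ probabilistically — fix an $\sim R^{-1}$-separated net on $\H$, of cardinality $\sim R^2$, and keep each point independently with probability $\sim R^{-1/2}$. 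A Chernoff--Hoeffding estimate (as in \eqref{CH}) handles (i) and simultaneously pins down the per-line counts of $\Pc_R$, so that the non-concentration hypothesis holds with overwhelming probability; the content is (ii), which must hold simultaneously over all (polynomially many) pieces $\tau$.

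This simultaneity is precisely where Bourgain's original theorem is too weak, and where the sharpened probabilistic estimate of Section \ref{prob} — the one behind \eqref{4rioiti5i9yi} — enters: it yields $K_4\lesssim 1$ for the restriction of a random net-subset to each fixed piece $\tau$ with a failure probability decaying superpolynomially in the number of frequencies lying on $\tau$, which is enough to survive a union bound over all the $\tau$'s together with the Chernoff events above. Fixing one $\Pc_R$ in the resulting full-measure event, \cite{GMO}'s decoupling followed by the per-piece bounds gives $K_4(R\Pc_R)\les 1$, with $|\Pc_R|\sim R^{3/2}$. The main obstacle — the technical heart of the argument — is establishing this sharpened probabilistic input in a form quantitatively strong enough for the union bound; a secondary point is verifying that a random net-subset genuinely meets the hypothesis under which the \cite{GMO} decoupling applies, and that the per-piece $\Lambda(4)$-sets it produces do add up to the maximal cardinality $R^{3/2}$.
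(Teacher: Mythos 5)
Your proposal follows essentially the same route as the paper: the $l^2(L^4)$ decoupling of \cite{GMO} for $\H$ over the line-adapted (overlapping) family of pieces, combined with a random selection of density $R^{-1/2}$ from the $R^{-1}$-lattice on $\H$, where an exponential-tail refinement of Bourgain's estimate (Theorem \ref{mir2}(a), packaged as Proposition \ref{p3}) supplies the union bound over all pieces and Chernoff--Hoeffding fixes the cardinality. The only cosmetic deviations are that the decoupling of \cite{GMO} used here is unconditional, so no non-concentration hypothesis on $\Pc_R$ is required, and that the sharpened probabilistic input is Theorem \ref{mir2}(a) rather than \eqref{4rioiti5i9yi}, which is the weaker expected-value bound that the paper points out is insufficient for the union bound.
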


	It was observed in \cite{BD2} that $l^2(L^p)$ decoupling into square-like caps $\theta$ (the analog of Theorem \ref{tbd}) is false for each $p>2$. The reason is that $\H$ contains lines. Each measure $\mu$ supported on such a line would fail $l^2$ decoupling, due to the lack of curvature.
	
	In fact, through each point on $\H$ there are two lines on $\H$. Their projections onto the $(\eta_1,\eta_2)$ plane are parallel to either the $\eta_1$ or the $\eta_2$ axis. Remarkably, there is an almost partition of $\H$ which leads to a successful $l^2$ decoupling on the critical space $L^4(\R^3)$. Let us describe it.
	
	For each dyadic number $R^{-1}\le 2^l\le 1$, we consider a partition $\Uc_l$ of $[0,1]^{2}$ into axis-parallel rectangles $U$ with dimensions roughly $(2^l,R^{-1}2^{-l})$. We call $\Sc_l$ the collection of vertical projections $S$ of these rectangles onto $\H$, and write $\Sc=\cup_{l}\Sc_l$. While $\Sc$ is not a partition, it is immediate that each $\xi\in\H$ is contained in only $\sim \log R$ many $S\in\Sc$.
	\medskip

	The following is Theorem 2.5 in \cite{GMO}. It will be one of the main ingredients in our proof of Theorem \ref{last}.
	\begin{te}
		\label{gmo}	
		Let $R\gg 1$. 	
		Given a (complex) measure $\mu$ supported on $\H$ we denote by $\mu_S$ its restriction to the set $S\in \Sc$.
		
		We have
		$$\|\widehat{\mu}\|_{L^{4}([0,R]^3)}\les (\sum_{S\in\Sc}\|\widehat{\mu_S}\|_{L^{4}([0,R]^3)}^2)^{1/2}.$$
	\end{te}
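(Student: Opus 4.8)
The plan is to prove this as a \emph{multi-scale} decoupling, organised by the ruled geometry of $\H$ together with an induction on scales. As a first step I would reduce to the standard functional formulation: it suffices to show that whenever $f\colon\R^3\to\C$ has Fourier transform supported in the $R^{-1}$-vertical neighbourhood $\Nc$ of $\H$, then
$$\|f\|_{L^4(w_{B_R})}\les\Big(\sum_{S\in\Sc}\|f_S\|_{L^4(w_{B_R})}^2\Big)^{1/2},$$
where $f_S$ is the Fourier restriction of $f$ to the $R^{-1}$-slab over $S$ and $w_{B_R}$ is a Schwartz weight concentrated on $B(0,R)$. Passing from the measure version to this one, and replacing the cover $\Sc$ (which has overlap only $\sim\log R$) by an honest partition, is routine and costs at most $R^\epsilon$. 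Two geometric facts drive everything: (i) through every point of $\H$ pass two lines of $\H$, and their projections to the $(\eta_1,\eta_2)$-plane are axis-parallel; (ii) for every dyadic $R^{-1}\le 2^l\le 1$, the axis-parallel rectangle of dimensions $2^l\times R^{-1}2^{-l}$ is, up to an affine shear, the largest box over which $\H$ is flat at scale $R^{-1}$. Thus $\Sc=\bigcup_l\Sc_l$ interpolates between the two ``extreme'' plate shapes $1\times R^{-1}$ and $R^{-1}\times 1$, and each line of $\H$ lies inside a single plate of one of the two extreme families.

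One cannot prove the estimate by decoupling at a single scale: $l^2L^4$ decoupling of $\H$ into $R^{-1/2}$-squares (the naive analogue of Theorem \ref{tbd}) is false, precisely because a measure supported on a line of $\H$ would then be subject to flat $l^2L^4$ decoupling of an interval into $R^{1/2}$ pieces, which loses $R^{1/8}$. For the same reason the classical ``$L^4=L^2\circ L^2$'' argument for the parabola has no direct analogue here, since $\H$ is a surface and the additive fibre of $\H+\H$ is one-dimensional (in suitable coordinates it is the graph $ts=\mathrm{const}$). The multi-scale family $\Sc$ is designed exactly to bypass these obstructions.

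The core is a broad/narrow (Bourgain--Guth) dichotomy run at every dyadic scale $R^{-1}\le\delta<1$. At scale $\delta$ one either finds that the mass of $f$ concentrates in a \emph{narrow} region --- a thin axis-parallel wedge around one of the lines of $\H$ --- or it is \emph{broad}, i.e.\ spread over pairs of $\delta$-caps whose tangent planes are quantitatively transverse. In the narrow case one shears the wedge (sending the piece $\{(\eta_1,\eta_2,\eta_1\eta_2):\eta_2\in[c,c+\delta]\}$ to $\{(\eta_1,\eta_2-c,\eta_1(\eta_2-c))\}$) and rescales it anisotropically by the dilation $(\eta_1,\eta_2,\eta_1\eta_2)\mapsto(\eta_1,\delta^{-1}\eta_2,\delta^{-1}\eta_1\eta_2)$; this maps the piece of $\H$ over the wedge onto a full copy of $\H$ at the strictly coarser resolution $\delta R$, and --- crucially --- it maps the target caps $\Sc_{l'}$ at resolution $\delta R$ back to genuine $\Sc$-caps of $\H$ at resolution $R$, so no new cap shapes are introduced. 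Hence the narrow contribution is governed by the same decoupling at a coarser resolution, with a \emph{different} aspect ratio; iterating this produces the various scales $2^l$ in $\Sc$, and terminates at the extreme plates $1\times R^{-1}$. In the broad case the relevant two-cap pieces of $\H$ have effective two-dimensional curvature, so a bilinear $L^4$ estimate applies: from the $L^2$-orthogonality of the products $\widehat{\mu_S}\,\widehat{\mu_{S'}}$ for transverse $S,S'$ (a Córdoba-type square function / bilinear restriction estimate for surfaces in $\R^3$) one gains, and then upgrades to the desired linear bound by the standard Bourgain--Guth passage, again descending one dyadic scale. Feeding the two cases into an induction on $\log R$ and bootstrapping the decoupling constant in the usual way yields the $R^\epsilon$ loss; the curved lower-dimensional sub-problems that appear after shearing are handled by the $l^2L^4$ decoupling for the parabola, a consequence of \cite{BD}.

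The step I expect to be the main obstacle is making the induction close with the correct geometry: one must verify that the narrow wedges are genuinely axis-parallel of the right shape (this is where fact (i) about the two rulings is essential), that the anisotropic rescaling sends $\Sc$-caps to $\Sc$-caps so that the bookkeeping of accumulated aspect ratios reproduces precisely $\bigcup_l\Sc_l$ and nothing larger, and that the process really terminates at the extreme plates. The second delicate point is that $p=4$ is the critical exponent for a surface in $\R^3$, so the broad estimate is borderline and must be carried out with the bilinear method rather than naive $L^4$ orthogonality, while keeping every constant at the level $R^\epsilon$ throughout the $\sim\log R$ iterations.
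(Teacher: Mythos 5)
A point of context before anything else: the paper does not prove this statement. Theorem \ref{gmo} is imported verbatim as Theorem 2.5 of \cite{GMO} and is used as a black box in the proof of Theorem \ref{last}, so there is no in-paper argument to compare yours against; what you have sketched is a new proof of the Guth--Maldague--Oh theorem, and it must be judged as such. Two ingredients of your sketch are sound: the family $\Sc$ is indeed invariant under the one-variable shear-and-rescale $(\eta_1,\eta_2)\mapsto(\eta_1,\delta^{-1}(\eta_2-c))$ (a cap of dimensions $2^{l'}\times(\delta R)^{-1}2^{-l'}$ for the rescaled problem pulls back to a cap of dimensions $2^{l'}\times R^{-1}2^{-l'}$ of the original, so the bookkeeping you describe is at least self-consistent), and you correctly identify the rulings of $\H$ as the reason square-cap $l^2L^4$ decoupling fails.

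The genuine gap is the broad case. For the hyperbolic paraboloid, ``tangent planes quantitatively transverse'' does not yield the C\'ordoba-type $L^4$ biorthogonality you invoke. The relevant quadratic form $\eta_1\eta_2$ has null directions along the coordinate axes: two $\delta$-caps separated in an axis-parallel direction have well-separated normals (hence transverse tangent planes), yet for such pairs the sum map $(\xi,\xi')\mapsto\xi+\xi'$ degenerates and there is no bilinear $L^4$ gain --- these are precisely the pairs that cause the failure of naive $L^4$ orthogonality in the first place, not the pairs lying on a common ruling inside one cap. Consequently the dichotomy cannot be ``broad versus a single thin wedge around one line'': the bad configurations at every scale are those aligned with \emph{either} of the two rulings, so the narrow alternative must record concentration along axis-parallel strips in both variables, possibly simultaneously, and the induction has to close for a two-parameter family of aspect ratios rather than the one-directional iteration you describe; if instead you strengthen ``broad'' to mean separation in both coordinates, then the broad estimate you appeal to is exactly the unproved heart of the matter. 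This is the difficulty that makes the GMO theorem nontrivial (and its higher-dimensional analogue false, cf.\ Theorem B.1 in \cite{GMO}). The critical-exponent bookkeeping at $p=4$ over the $\sim\log R$ scales is likewise asserted rather than carried out, but the transversality step is the one that, as written, fails.
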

	The following proposition records the second ingredient needed in the proof of Theorem \ref{last}. Since the sets in $\Sc$ have some overlap, the selection of one exponential $e(\xi\cdot x)$ will affect the $K_p$ constant of all the sets in $\Sc$ containing $\xi$. Addressing this needs some extra care.  
	\begin{pr}
		\label{p3}	
		Let $p>2$.	
		Consider an orthonormal system $\Phi$ consisting of $M$ complex exponentials  $\varphi_1,\ldots,\varphi_M$ on $[0,1]^d$. For each subset $S$ of $\{1,2,\ldots,M\}$ we denote by  $\Phi_S$  the set $(\varphi_i)_{i\in S}$.
		
		Let $\Sc$ be a collection of subsets of $\{1,\ldots,M\}$, each having cardinality $N$. Assume that, say, $|\Sc|\le N^2$. Then there is a subset $G\subset\{1,\ldots,M\}$ with cardinality $\sim MN^{\frac2p-1}$ such that 
		$$K_p(\Phi_{G\cap S})\lesssim \log N$$
		for each $S\in\Sc$. 
	\end{pr}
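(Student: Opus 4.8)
The plan is to mimic Bourgain's probabilistic selection from Theorem \ref{mi3}(a)/Theorem \ref{tB}, but to run it simultaneously against all sets $S\in\Sc$, paying only a logarithmic price because each point lies in boundedly many (in fact all) of the $|\Sc|$ sets. First I would recall the mechanism behind Theorem \ref{tB}: one chooses a random subset $G$ by including each index independently with probability $\delta\sim N^{2/p-1}$, so that $\E|G|\sim M\delta\sim MN^{2/p-1}$, and then shows that with positive (in fact overwhelming) probability $K_p(\Phi_G)\lesssim 1$. The key quantitative input is a tail/moment bound: for a fixed orthonormal system $\Psi=\{\psi_1,\dots,\psi_n\}$ with $\|\psi_j\|_\infty\le 1$ and a random $\delta$-selector $G$, one has $\E\|\sum_{j\in G}a_j\psi_j\|_p^p\lesssim (\delta n)^{p/2}\|a_j\|_{\ell^\infty}^p$ uniformly, or more precisely a subexponential concentration statement for $K_p(\Psi_G)$ around its mean $\sim 1$ (this is exactly the content of Bourgain's argument, and is what Theorem \ref{tnew}/Remark \ref{refinal} refine). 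The precise statement I want is: $\P\big(K_p(\Phi_{G\cap S})\gtrsim C\log N\big)\le N^{-10}$, say, for each \emph{fixed} $S\in\Sc$, once $C$ is large enough depending on $p,d$.

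Granting such a per-$S$ failure bound, the proof is a union bound. Since $|\Sc|\le N^2\le M^2$ is only polynomially large,
\[
\P\big(\exists\, S\in\Sc:\ K_p(\Phi_{G\cap S})\gtrsim C\log N\big)\le |\Sc|\cdot N^{-10}\le N^{-8},
\]
which is $o(1)$; meanwhile $\P(|G|\sim MN^{2/p-1})\to 1$ by a Chernoff bound \eqref{CH} (here one also needs $MN^{2/p-1}$ to be large, i.e. $M\gg N^{1-2/p}$, which one may assume since otherwise the conclusion is vacuous or $G$ can be taken to be a single point). Hence there exists a realization of $G$ with the right cardinality and $K_p(\Phi_{G\cap S})\lesssim \log N$ for every $S\in\Sc$ simultaneously. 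The logarithm in the conclusion is precisely the slack one must allow so that a polynomial union bound closes; with only $K_p(\Phi_{G\cap S})\lesssim 1$ demanded, a single bad $S$ could ruin things, but the $\log N$ cushion makes the failure probability for each $S$ beat $|\Sc|^{-1}$.

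The main obstacle is establishing the sharp per-set concentration estimate $\P(K_p(\Phi_{G\cap S})\gtrsim C\log N)\le N^{-\text{large}}$ with the correct dependence — this is the quantitative heart of Bourgain's method and presumably where the refinements of Section \ref{prob} (Theorem \ref{tnew}) are invoked. One route is to bound the $m$-th moment $\E\,K_p(\Phi_{G\cap S})^{m}$ with $m\sim \log N$ by iterating Bourgain's estimate on the $p$-th moment of $\|\sum_{i\in G\cap S}a_i\varphi_i\|_p$, discretizing the supremum over coefficient vectors $a$ on the unit $\ell^2$-sphere of $\C^N$ via an $N$-net (costing a $C^N$ union bound that is absorbed by the subgaussian gain $e^{-cN}$ in the individual deviation bound), and then using Markov's inequality. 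The bookkeeping — keeping the implicit constants independent of $M$, $N$, $R$, and making sure the net argument interacts correctly with the random support $G$ — is the delicate part; everything else (union bound over $\Sc$, Chernoff bound on $|G|$) is routine. I would isolate the single-set statement as a lemma, deduce it from Theorem \ref{tnew}, and then spend two lines on the union bound.
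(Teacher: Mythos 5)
Your proposal is correct and follows essentially the same route as the paper: independent selectors with probability $N^{2/p-1}$, a per-set tail bound for $K_p(\Phi_{G\cap S})$ at threshold $\log N$, a union bound over $|\Sc|\le N^2$, and a Chernoff bound \eqref{CH} on $|G|$. The only remark is that the concentration estimate you flag as the main obstacle need not be re-derived via moments and nets: it is precisely Theorem \ref{mir2}(a) with $q=2$ (not Theorem \ref{tnew}, which only gives existence), whose bound $\P(K_p(\Phi_{G\cap S})\ge \log N)\lesssim N^{-C_p\log N}$ already beats the polynomial union bound with room to spare.
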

	\begin{proof}
		We consider $\{0,1\}$-valued i.i.d.'s $\delta_i$, $1\le i\le M$, with $\P(\delta_i(\omega)=1)=N^{\frac2p-1}$. We let $G(\omega)=\{i:\;\delta_i(\omega)=1\}$ and note the expected value $\E_\omega(|G(\omega)|)=MN^{\frac2p-1}$.
		Then we have (see \eqref{CH})
		\begin{equation}
			\label{jriufreuiorugio}
			\P(|G(\omega)|\sim MN^{\frac2p-1})>\frac12.
		\end{equation}
		
		On the other hand, Theorem \ref{mir2} (a) with $q=2$ implies that $\P(K_p(\Phi_{G(\omega)\cap S})\ge \log N)\lesssim N^{-C_p\log N}$  for each $S\in\Sc$ and some $C_p>0$. Thus,
		$$\P(K_s(\Phi_{G(\omega)\cap S})\le \log N,\;\forall S\in\Sc)\gtrsim 1-|\Sc|N^{-C_p\log N}>\frac12.$$
		Combining this with \eqref{jriufreuiorugio} leads to the desired conclusion.
		
	\end{proof}
	\begin{proof}(of Theorem \ref{last})
		
		We consider the orthonormal system $\Phi$ on $[0,1]^3$ consisting of the functions $$\varphi_{n,m}(x)=e(R(\frac{n}{R},\frac{m}R,\frac{nm}{R^2})\cdot x),\; 1\le n,m\le R.$$ Proposition \ref{p3} applied to $\Phi$ with $p=4$, $M=R^2$, $N=R$ delivers a set $G\subset \{1,\ldots,R\}^2$ with cardinality $\sim R^{3/2}$ such that
		\begin{equation}
			\label{edkui95tu95}
			K_4(\Phi_{G\cap U})\le \log R\end{equation}
		for each $U\in \cup_{l}\Uc_l$. The $\log R$ loss is harmless, as it is absorbed in the notation $\les$.

		We define $$\Pc_R=\{(\frac{n}{R},\frac{m}{R},\frac{nm}{R^2}):\;(n,m)\in G\}.$$
		Then \eqref{edkui95tu95} means that for each $S\in\Sc$ and $a_\xi$
		$$\frac{1}{R^3}\int_{[0,R]^3}|\sum_{\xi\in \Pc_R\cap S}a_{\xi}e(\xi\cdot x)|^4dx\le \|a_{\xi}\|_{l^2(\Pc_R\cap S)}^4.$$
		On the other hand, Theorem \ref{gmo} implies that 
		$$\int_{[0,R]^3}|\sum_{\xi\in \Pc_R}a_{\xi}e(\xi\cdot x)|^4dx\les \left(\sum_{S\in\Sc}(\int_{[0,R]^3}|\sum_{\xi\in \Pc_R\cap S}a_{\xi}e(\xi\cdot x)|^4dx)^{1/2}\right)^2.$$
		We combine the last two inequalities, and use the fact that each $\xi\in\Pc_R$ appears in $\sim \log R$ many sets $S$, to conclude that
		$$\frac1{R^3}\int_{[0,R]^3}|\sum_{\xi\in \Pc_R}a_{\xi}e(\xi\cdot x)|^4dx\les\|a_\xi\|_{l^2}^4.$$
	\end{proof}
	It would be interesting to find an explicit set $\Pc_R$ satisfying  Theorem \ref{last}. Also, rather intriguingly, the most natural analog of Theorem \ref{gmo} for higher dimensional hyperbolic paraboloids is false, see Theorem B.1 in \cite{GMO}. It nevertheless seems reasonable to ask whether there is a decoupling-type approach to the existence of $\Lambda(p)$-sets on these manifolds.
	\medskip
	
	A similar question may be asked for the moment curve $\Gamma^d$ at the critical exponent $\gamma_d=d(d+1)$, for $d\ge 3$. Even though $\gamma_d$ is an  even integer, we do not know how to construct explicit examples. In order to help the reader understand the difficulty of this task, we present a decoupling proof of the following theorem.	
	\begin{te}
		\label{t2}
		For each $p\ge p_d$ and $R\gg 1$ there is a set $\Pc_R\subset \Gamma^d$ with $\sim R^{2d/p}$ points satisfying
		\begin{equation}
			\label{e4}
			\frac{1}{R^d}\int_{[0,R]^d}|\sum_{\xi\in\Pc_R}a_\xi e(\xi\cdot x)|^pdx\les \|a_\xi\|_{l^2}^p.
		\end{equation}
	\end{te}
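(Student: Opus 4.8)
The plan is to mimic the strategy from the proof of Theorem \ref{t1}, replacing the $l^2$ decoupling for the hypersurface $\Sigma$ by the $l^2$ decoupling for the moment curve $\Gamma^d$ at the critical exponent $p_d = \frac{2(d+1)}{d-1}$, and replacing the one-dimensional building blocks by the curve analog of Lemma \ref{l1}. First, I would establish a moment-curve version of Lemma \ref{l1}: any set $\Pc$ of $N$ points on $\Gamma^d$ lying in $(R^{-1}\Z)$ (i.e. of the form $(\eta,\eta^2,\ldots,\eta^d)$ with $\eta\in R^{-1}\Z$) contains a subset $S$ with $|S|\sim N^{2/p}$ such that $K_p(RS)\lesssim 1$, for any $p\ge 2$. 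This follows verbatim from the argument for Lemma \ref{l1}: the functions $e(R\eta\cdot x_1)$ form a uniformly bounded orthonormal system on $[0,1]$ in the first variable, so Theorem \ref{tB} produces $S$ with $|S|\sim N^{2/p}$ and $\frac1R\int_{[0,R]}|\sum_{\eta\in S}b_\eta e(\eta x_1)|^p dx_1\lesssim\|b_\eta\|_{l^2}^p$; substituting $b_\eta = a_\eta e((\eta^2 x_2+\cdots+\eta^d x_d))$ and integrating over $(x_2,\ldots,x_d)\in[0,R]^{d-1}$ gives \eqref{e1} for the curve.

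Next I would partition $[0,1]$ into $\sim R^{1/2}$ arcs of length $\sim R^{-1/2}$; call the corresponding arcs $\tau$ on $\Gamma^d$. Each $\tau$ contains $\sim R^{1/2}$ points of $(R^{-1}\Z)$, so the curve analog of Lemma \ref{l1} yields $\Pc_R(\tau)\subset\tau$ with $|\Pc_R(\tau)|\sim (R^{1/2})^{2/p}$ and \eqref{e2} (with $p$ in place of $p_d$). Set $\Pc_R = \bigcup_\tau \Pc_R(\tau)$, so $|\Pc_R|\sim R^{1/2}\cdot R^{1/p}$. When $p=p_d$ this is exactly $R^{2d/p_d}$ (using $d = \frac{2(d+1)}{p_d}$, equivalently $\frac12 + \frac1{p_d} = \frac{d}{p_d}$... wait, that needs $p_d = 2(d+1)/(d-1)$, so $\frac{d}{p_d} = \frac{d(d-1)}{2(d+1)}$ and $\frac12+\frac1{p_d} = \frac{(d+1)+2}{2(d+1)} = \frac{d+3}{2(d+1)}$, which is not equal unless... ). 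Here I would be careful: the correct grouping for $\Gamma^d$ is not arcs of length $R^{-1/2}$ but rather the anisotropic boxes of dimensions $R^{-1/d}\times R^{-2/d}\times\cdots\times R^{-1}$ dictated by the moment curve scaling, matching the $l^2$ decoupling of Bourgain--Demeter--Guth for $\Gamma^d$ at $\gamma_d = d(d+1)$. So for the endpoint $p=\gamma_d$ I would instead partition $[0,1]$ into $\sim R^{1/d}$ arcs $\tau$ of length $R^{-1/d}$, use the curve Lemma on each (each $\tau$ contains $\sim R^{1-1/d}$ points), producing $|\Pc_R(\tau)|\sim R^{(1-1/d)\cdot 2/\gamma_d}$, and then $|\Pc_R|\sim R^{1/d}\cdot R^{(1-1/d)2/\gamma_d} = R^{2d/\gamma_d}$ after checking $\frac1d + \frac{2(d-1)}{d\gamma_d} = \frac{2d}{\gamma_d}$, i.e. $\frac{\gamma_d}{d} + \frac{2(d-1)}{d} = 2d$, i.e. $\gamma_d + 2(d-1) = 2d^2$, i.e. $\gamma_d = 2d^2 - 2d + 2$... hmm. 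The reconciliation of cardinalities is the bookkeeping step I'd need to get exactly right, and for general $p\ge p_d$ one interpolates between these regimes or argues monotonically.

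With $\Pc_R$ in hand, the main inequality \eqref{e4} follows by applying $l^2$ decoupling for $\Gamma^d$ into the appropriate caps $\tau$ (Bourgain--Demeter--Guth at the critical exponent, with $R^\epsilon$ loss), which gives
\[
\int_{[0,R]^d}\Big|\sum_{\xi\in\Pc_R}a_\xi e(\xi\cdot x)\Big|^p dx \les \Big(\sum_\tau\Big(\int_{[0,R]^d}\Big|\sum_{\xi\in\Pc_R(\tau)}a_\xi e(\xi\cdot x)\Big|^p dx\Big)^{2/p}\Big)^{p/2},
\]
valid for $p$ up to the critical exponent of the relevant decoupling, and then applying \eqref{e2} to each inner term. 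For $p$ strictly larger than the critical exponent one first passes to a coarser flat partition (arcs where the curve is essentially flat at scale $R^{-1}$ in the higher coordinates) so that the flat $\Lambda(p)$ input remains available; extending from $p=p_d$ to all $p\ge p_d$ is done by the same subset-of-a-$\Lambda(p)$-set-is-a-$\Lambda(q)$-set monotonicity used implicitly elsewhere, or by noting \eqref{e4} for $p$ implies it for larger $p$ after adjusting cardinality downward via Theorem \ref{tB}. The main obstacle I anticipate is precisely matching the cap geometry used in the $l^2$ decoupling theorem for $\Gamma^d$ with the cap geometry in which the flat building-block Lemma produces $\Lambda(p)$-sets of the right size — getting the exponent arithmetic $|\Pc_R|\sim R^{2d/p}$ to come out exactly, simultaneously with the decoupling constant being $\les 1$, is where all the constraints collide, and it is the reason the critical exponent $p_d$ (rather than $\gamma_d$) appears as the threshold in the statement.
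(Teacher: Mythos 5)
Your plan---mimic Theorem \ref{t1} by decoupling $\Gamma^d$ into arcs at the canonical scale and filling each arc with a flat $\Lambda(p)$-set obtained from Theorem \ref{tB} applied in the first coordinate---is the natural first attempt, but the cardinality count that you yourself flagged does not close, and this is a genuine gap rather than bookkeeping. With arcs $\tau$ of length $\sim R^{-1/d}$ (the scale of Theorem \ref{tbdg}) and $p=\gamma_d=d(d+1)$, the flat input produces only $\sim (R^{1-1/d})^{2/\gamma_d}$ points per arc, whereas one needs $\sim R^{\frac{2d}{\gamma_d}-\frac1d}$ per arc to reach total size $R^{2d/\gamma_d}$; the two exponents agree exactly when $d=2$ (this is the base case) and fall short for every $d\ge 3$ (for $d=3$: $R^{1/9}$ versus the required $R^{1/6}$). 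The paper closes this deficit with an induction on dimension (Proposition \ref{p1}): inside an arc of $\Gamma^{d+1}$ of length $R^{-1/(d+1)}$ one first builds, via the induction hypothesis for $\Gamma^d$ and Fubini (cylindrical lifting through $\pi_d$), $\Lambda(\gamma_d)$-sets on sub-arcs of length $R^{-1/d}$; then cylindrical $l^2(L^{\gamma_d})$ decoupling combines them into a set on the whole arc with $K_{\gamma_d}\les 1$; and finally Theorem \ref{mi3} with $q=\gamma_d$, $p=\gamma_{d+1}$ extracts from that set a $\Lambda(\gamma_{d+1})$-subset of exactly the right size. This repeated use of the probabilistic extraction theorem at the intermediate critical exponents $\gamma_2<\gamma_3<\cdots<\gamma_d$ is the mechanism missing from your proposal: no single application of decoupling plus a flat one-dimensional input can produce enough points, which is precisely why the paper's example is non-explicit even though $\gamma_d$ is an even integer.

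Two further corrections. The threshold in the theorem is $\gamma_d=d(d+1)$, not $p_d=\frac{2(d+1)}{d-1}$ (the ``$p_d$'' in the statement is evidently a slip): for $p<\gamma_d$ the constructive-interference argument of Example 1 rules out \eqref{e4} even with $R^\epsilon$ losses, and the paper's proof treats $p=\gamma_d$ first and then $p>\gamma_d$; so there is no regime between $p_d$ and $\gamma_d$ to ``interpolate'' into. For the passage to $p>\gamma_d$, your suggestion to adjust the cardinality downward ``via Theorem \ref{tB}'' also undercounts: applying the $q=2$ result to a set of size $R^{2d/\gamma_d}$ yields only $\sim R^{4d/(\gamma_d p)}\ll R^{2d/p}$ points; one must instead apply Theorem \ref{mi3} with $q=\gamma_d$ to the $\Lambda(\gamma_d)$-set already constructed, exploiting that its $K_{\gamma_d}$ constant is $\les 1$.
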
	
	
	We will use the following $l^2$ decoupling for the moment curve proved in \cite{BDG}. 
	\begin{te}
		\label{tbdg}	
		Let $R\gg 1$. 	
		Consider a partition of $\Gamma^d$ into arcs $\tau$ with length $\sim R^{-1/d}$. Given a (complex) measure $\mu$ supported on $\Gamma^d$ we denote by $\mu_\tau$ its restriction to $\tau$.
		
		We have
		$$\|\widehat{\mu}\|_{L^{\gamma_d}([0,R]^d)}\les (\sum_{\tau}\|\widehat{\mu_\tau}\|_{L^{\gamma_d}([0,R]^d)}^2)^{1/2}.$$
	\end{te}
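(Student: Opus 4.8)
The stated inequality is the Bourgain--Demeter--Guth $l^2$ decoupling theorem for the moment curve, written at scale $\delta=R^{-1/d}$: an arc of length $\sim R^{-1/d}$ is a $\delta$-arc, the cube $[0,R]^d$ is a ball of radius $\delta^{-d}$, and the exponent $\gamma_d=d(d+1)$ is critical. A full proof is long, so I will only describe its architecture. First I would pass from the measure formulation to the extension-operator formulation by the same mollification used earlier to transfer restriction estimates into exponential-sum estimates (cf.\ \eqref{E1}): thickening the Fourier support of $\mu$ to an $L^1$-normalized bump on the $R^{-1}$-box around $\Gamma^d$ and writing $\mu=\sum_\tau\mu_\tau$ reduces matters to $\|Ef\|_{L^{\gamma_d}(B_R)}\les(\sum_\tau\|E_\tau f\|_{L^{\gamma_d}(B_R)}^2)^{1/2}$, where $E$ is the extension operator of $\Gamma^d$ and $f\in C([0,1])$. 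Equivalently, it suffices to show that the best constant $\Dec_d(\delta)$ in the $l^2(L^{\gamma_d})$ decoupling of $\Gamma^d$ into $\delta$-arcs satisfies $\Dec_d(\delta)\les\delta^{-\epsilon}$.

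The proof is a double induction. The outer induction is on the dimension $d$: the base case $d=1$ is $L^2$-orthogonality, and $d=2$ recovers the parabola decoupling of \cite{BD}. As input for the step from $d-1$ to $d$ one uses $l^2(L^p)$ decoupling for the lower moment curves $\Gamma^1,\dots,\Gamma^{d-1}$ in the full subcritical range $2\le p\le j(j+1)$, which follows from the critical estimates at each dimension $j<d$ by interpolation with the trivial $L^2$ bound. The inner bootstrapping is run through a $d$-linear decoupling constant: call $\delta$-arcs $\tau_1,\dots,\tau_d$ \emph{$\nu$-transverse} if the tangent lines of $\Gamma^d$ along them span a parallelepiped of volume $\gtrsim\nu$, and let $\Dec^{\mathrm{mult}}_d(\delta)$ be the best constant in $\|\prod_{i=1}^d|E_{\tau_i}f|^{1/d}\|_{L^{\gamma_d}(B_{\delta^{-d}})}\les(\prod_{i=1}^d\sum_{\tau\subset\tau_i}\|E_\tau f\|_{L^{\gamma_d}}^2)^{1/(2d)}$.

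Two estimates do the work. The first is the multilinear Kakeya inequality of Bennett--Carbery--Tao and Guth, which for $\nu$-transverse arcs controls the overlap of the anisotropic plates of dimensions $\delta^{-1}\times\delta^{-2}\times\cdots\times\delta^{-d}$ dual to $\delta$-arcs, and hence bounds the $d$-linear expression at the coarsest scale with an essentially lossless constant. The second is a \emph{ball-inflation lemma}: for an intermediate scale $\delta<\rho<1$, the $L^{\gamma_d}$-average of $\prod_i|E_{\tau_i}f|^{1/d}$ over balls of radius $\delta^{-d}$ inside a ball of radius $\rho^{-d}$ is dominated by the same quantity at scale $\rho^{-d}$, at the cost of one application of $l^2$ decoupling for each of $\Gamma^1,\dots,\Gamma^{d-1}$ to the corresponding lower-dimensional ``slices''. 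The one place the geometry of $\Gamma^d$ is genuinely used is the verification that the Brascamp--Lieb datum appearing here is nondegenerate with uniformly bounded constant (via the Bennett--Carbery--Christ--Tao theory): this reduces to the non-vanishing of the generalized Vandermonde determinants formed from $1,\eta,\dots,\eta^d$ along an arc, i.e.\ to $\Gamma^d$ having non-vanishing curvatures of all orders, and it is exactly this balance that forces $\gamma_d=d(d+1)$. One also uses the affine rescaling symmetry of $\Gamma^d$ --- the unipotent group of affine maps of $\R^d$ preserving the moment curve --- under which a $\delta$-arc becomes all of $\Gamma^d$ and a $\rho$-subarc becomes a $(\rho/\delta)$-arc, so that $l^2(L^{\gamma_d})$ decoupling constants are scale-invariant.

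It remains to assemble these. A Bourgain--Guth broad/narrow decomposition bounds $\Dec_d(\delta)$ by $\Dec^{\mathrm{mult}}_d(\delta)$ plus the contribution of non-transverse $d$-tuples; the latter are concentrated $O(\delta)$-close to a hyperplane, and after projecting onto that hyperplane they are controlled by decoupling for a $\Gamma^{d-1}$ composed with a trivial flat decoupling, hence by the outer induction hypothesis. Then one iterates the multilinear Kakeya bound and the ball-inflation lemma across the dyadic scales between $\delta$ and $1$, using the rescaling to reset each step; after a H\"older interpolation to land precisely at $p=\gamma_d$, this yields a self-improving inequality of the shape $\Dec_d(\delta)\le C_\epsilon\,\delta^{-\epsilon}\,\Dec_d(\delta)^{1-\theta}\cdot(\text{lower-dimensional terms})$ for some $\theta=\theta(d)>0$, and since the lower-dimensional terms are $\les\delta^{-\epsilon}$ by the outer induction, a standard bootstrapping argument gives $\Dec_d(\delta)\les\delta^{-\epsilon}$. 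Undoing the mollification of the first step returns the estimate for $\widehat\mu$. The main obstacle is the bookkeeping of this multi-scale iteration --- keeping the compounding $\delta^{-\epsilon}$-losses under control over $\sim\log(1/\delta)$ steps, and, most delicately, certifying that every Brascamp--Lieb constant appearing in the ball-inflation step is finite and uniformly bounded; this is the technical heart of \cite{BDG} and the precise point where the non-degeneracy of the moment curve pins down the critical exponent.
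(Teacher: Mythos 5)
The paper does not prove this statement at all: it is quoted verbatim as the $l^2$ decoupling theorem for the moment curve from \cite{BDG}, so there is no internal argument to compare against. Your outline correctly identifies the result and faithfully sketches the architecture of the Bourgain--Demeter--Guth proof (reduction to the extension operator, induction on dimension, multilinear decoupling with transversality, multilinear Kakeya/Brascamp--Lieb input, ball inflation, parabolic rescaling, and the bootstrapping across scales), which is exactly the source the paper relies on.
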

	The key step is the following proposition.
	\begin{pr}
		\label{p1}	
		Each arc $\tau\subset \Gamma^d$ with length $\sim R^{-1/d}$ contains a set $\Pc_R(\tau)$ with cardinality $\sim R^{\frac{2d}{\gamma_d}-\frac1d}$ such that for each $a_\xi\in\C$
		\begin{equation}
			\label{e7}
			\frac{1}{R^d}\int_{[0,R]^d}|\sum_{\xi\in\Pc_R(\tau)} a_\xi e(\xi\cdot x)|^{\gamma_d}dx\les \|a_\xi\|_{l^2}^{\gamma_d}.
		\end{equation}
		Moreover the points in $\Pc_R(\tau)$ have the first coordinate in the set $R^{-1}\Z$.
	\end{pr}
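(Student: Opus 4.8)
The plan is to present the $1/R$-separated points of $\tau$ as an explicit orthonormal system of exponentials on $[0,1]^d$, to identify its $L^{\gamma_d}$ orthogonality constant via the sharp Fourier extension inequality for $\Gamma^d$, and then to extract $\Pc_R(\tau)$ using the endpoint case (b) of Theorem \ref{mi3}.

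First I set up the system. Write $\tau=\{(\eta,\eta^2,\dots,\eta^d):\eta\in I\}$ with $|I|\sim R^{-1/d}$, and put
$$\Pc=\{(\eta,\eta^2,\dots,\eta^d):\ \eta\in R^{-1}\Z\cap I\},\qquad N:=|\Pc|\sim R^{1-1/d}.$$
Every point of $\Pc$ has first coordinate in $R^{-1}\Z$, which is the last assertion of the proposition. For distinct $\xi,\xi'\in\Pc$ the vector $R\xi-R\xi'$ has a nonzero integer first coordinate, so $\Phi_\tau:=\{e(R\xi\cdot x):\xi\in\Pc\}$ is an orthonormal system of complex exponentials on $[0,1]^d$ with $|\Phi_\tau|=N$; unwinding the definition, $K_{\gamma_d}(\Phi_\tau)$ is the best constant $K$ in
$$\frac1{R^d}\int_{[0,R]^d}\Big|\sum_{\xi\in\Pc}a_\xi e(\xi\cdot x)\Big|^{\gamma_d}dx\ \le\ K^{\gamma_d}\,\|a_\xi\|_{l^2}^{\gamma_d},$$
so \eqref{e7} (indeed with no $R^\epsilon$ loss) is equivalent to finding $\Psi\subset\Phi_\tau$ with $K_{\gamma_d}(\Psi)\lesssim1$ and $|\Psi|\sim R^{\frac{2d}{\gamma_d}-\frac1d}$.

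The heart of the matter is the value of $K_{\gamma_d}(\Phi_\tau)$. On the one hand, $\Pc$ is a set of $1/R$-separated points on $\Gamma^d$ — it is immaterial that they cluster in the short arc $\tau$ — and the $L^2\to L^{\gamma_d}$ extension bound \eqref{E3} holds for $\Gamma^d$ and fails for smaller exponents (Example 1); hence Theorem \ref{lkgt grtithiyphip[]}, applied with $\Mc=\Gamma^d$, $m=1$, $r=2$, $q=\gamma_d$, gives $\|\sum_{\xi\in\Pc}a_\xi e(\xi\cdot x)\|_{L^{\gamma_d}([0,R]^d)}\lesssim R^{1/2}\|a_\xi\|_{l^2}$, and therefore $K_{\gamma_d}(\Phi_\tau)\lesssim R^{\frac12-\frac{d}{\gamma_d}}=R^{\frac12-\frac1{d+1}}$. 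On the other hand, after a volume-preserving affine change of frequency coordinates $\Pc$ lies in a box of dimensions $\sim R^{-1/d}\times R^{-2/d}\times\dots\times R^{-1}$, so testing the same inequality with $a_\xi\equiv1$ and invoking constructive interference (Lemma \ref{lci}) on the corresponding polar box — whose intersection with $[0,R]^d$ has measure $\sim R^{(d+1)/2}$ — yields $K_{\gamma_d}(\Phi_\tau)\gtrsim R^{\frac12-\frac1{d+1}}$ as well. In particular there is $\kappa=\kappa(d)>0$ with $|\Phi_\tau|^{\kappa}\le K_{\gamma_d}(\Phi_\tau)^2$ once $R$ is large (since $N^{1/2}=R^{\frac{d-1}{2d}}$ while $K_{\gamma_d}(\Phi_\tau)^2\gtrsim R^{\frac{d-1}{d+1}}$ and $\frac{d-1}{2d}<\frac{d-1}{d+1}$), so the hypothesis of Theorem \ref{mi3}(b) is verified. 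Note that Corollary \ref{2.2} and Proposition \ref{form} are not directly usable here, because $\Phi_\tau$ is a proper subsystem of $\Phi_{R,\Gamma^d}$ whose orthogonality constant is too large relative to its cardinality for those statements with ambient parameter $R^d$.

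Finally I apply Theorem \ref{mi3}(b) to $\Phi_\tau$ with $q=\gamma_d$. It delivers $\Psi\subset\Phi_\tau$ with $K_{\gamma_d}(\Psi)\lesssim1$ and
$$|\Psi|\sim K_{\gamma_d}(\Phi_\tau)^{-2}|\Phi_\tau|\sim R^{-\frac{d-1}{d+1}}\cdot R^{\frac{d-1}{d}}=R^{\frac{d-1}{d(d+1)}}=R^{\frac{2d}{\gamma_d}-\frac1d},$$
with implicit constants depending only on $d$. Declaring $\Pc_R(\tau)\subset\tau$ to be the subset indexed by $\Psi$ then completes the proof. The one genuinely delicate feature is that the argument lives exactly at the endpoint $p=q=\gamma_d$, forced by the failure of \eqref{E3} below $\gamma_d$; part (a) of Theorem \ref{mi3} is therefore inapplicable, and everything rests on the new endpoint part (b) together with the observation that constructive interference supplies precisely the lower bound on $K_{\gamma_d}(\Phi_\tau)$ that its hypothesis demands.
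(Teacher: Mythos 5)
Your route is genuinely different from the paper's. The paper proves Proposition \ref{p1} by induction on $d$: it splits the arc into sub-arcs of length $R^{-1/d}$, applies the induction hypothesis on each, glues them with cylindrical $l^2$ decoupling for the moment curve (Theorem \ref{tbdg}), and then upgrades from exponent $\gamma_d$ to $\gamma_{d+1}$ via Theorem \ref{mi3} with $q=\gamma_d<p=\gamma_{d+1}$ (so part (a)), the base case $d=2$ coming from Theorem \ref{tB}; this is deliberately a decoupling argument and it loses $R^\epsilon$. You instead treat the arc in one stroke: the $1/R$-separated points of $\tau$ form an orthonormal system whose $K_{\gamma_d}$ constant you read off from the $L^2\to L^{\gamma_d}$ extension estimate for $\Gamma^d$ (Theorem \ref{lkgt grtithiyphip[]} with $r=2$, $q=\gamma_d$, as in Example 1), and you select with the endpoint Theorem \ref{mi3}(b). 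If completed, this is shorter, avoids decoupling entirely, gives \eqref{e7} with no $R^\epsilon$ loss, and is essentially a localized version of the Section \ref{s2} argument, with part (b) doing the work that the induction does in the paper. Your numerology (upper bound $K_{\gamma_d}(\Phi_\tau)\lesssim R^{\frac12-\frac1{d+1}}$, target size $K^{-2}N\sim R^{\frac{d-1}{d(d+1)}}=R^{\frac{2d}{\gamma_d}-\frac1d}$) is correct.

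There is, however, a gap in your verification of the hypothesis of Theorem \ref{mi3}(b), i.e.\ of the lower bound $K_{\gamma_d}(\Phi_\tau)\gtrsim R^{\frac12-\frac1{d+1}}$. Testing with $a_\xi\equiv 1$ gives constructive interference on a dilate of the polar box $B^o$ of the sheared box $B\supset\tau$, which is centered at the origin; your claim that $|B^o\cap[0,R]^d|\sim R^{\frac{d+1}{2}}$ fails when the arc sits at parameter $c$ bounded away from $0$, and the ``volume-preserving affine change of frequency coordinates'' does not rescue it, because the dual change of the spatial variable is a shear that does not preserve $[0,R]^d$. Concretely, for $d=2$ and $c\sim 1$ the interference region inside $[0,R]^2$ is essentially $\{x_1+2cx_2\le \epsilon R^{1/2}\}$, of measure $\sim R$ rather than $R^{3/2}$; for $d\ge 3$ even the translated resonance regions coming from the $R^{-1}\Z$ structure do not obviously restore the full measure, so constant coefficients do not visibly produce the bound you need. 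The fix is cheap: take unimodular modulated coefficients $a_\xi=e(-\xi\cdot v)$ with $v=(R/2,\ldots,R/2)$, which recenters the spatial window; since a suitable dilate $c_dB^o$ lies in $[-R/2,R/2]^d$, this yields $K_{\gamma_d}(\Phi_\tau)\gtrsim N^{1/2}R^{\frac{1-d}{2\gamma_d}}=R^{\frac{d-1}{2(d+1)}}$ uniformly in the position of the arc. (Alternatively, any fixed power-of-$N$ lower bound suffices for the $\kappa$-hypothesis, and if $K_{\gamma_d}(\Phi_\tau)$ were smaller, Theorem \ref{mi3}(b) would simply produce a larger $\Psi$, from which you pass to a subset of the required cardinality, using that $K_p$ only decreases under taking subsets.) With this repair your argument is correct.
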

	\begin{proof}
		Let us denote by $\pi_d:\Gamma^{d+1}\to\Gamma^d$ the bijection
		$$\pi_d(t,\ldots,t^{d+1})=(t,\ldots,t^d).$$	
		We use induction on $d$. The case $d=2$ follows by using Theorem \ref{tB}, as in the proof of Theorem \ref{t1}. Indeed, there are $N\sim R^{1/2}$ points on $\tau$ with first coordinate in $R^{-1}\Z$. It suffices to note that $N^{\frac2{\gamma_2}}\sim R^{\frac{4}{\gamma_2}-\frac12}$.
		
		Assume we have verified the conclusion for some $d$, and let us prove it for $d+1$. Consider $\tau\subset \Gamma^{d+1}$ with length $\sim R^{-\frac1{d+1}}$. We partition $\tau$ into arcs $\tau'$ with length $\sim R^{-\frac1d}$. 
		\\
		\\
		Step 1.
		The arc  $\pi_d(\tau')$  on $\Gamma^d$ has length $\sim R^{-\frac1{d}}$.  Using our induction hypothesis and Fubini we may find $\Pc_R(\tau')\subset \tau'$ with cardinality $\sim R^{\frac{2d}{\gamma_d}-\frac1d}$ such that for each $a_\xi$
		\begin{equation}
			\label{e6}
			\frac{1}{R^{d+1}}\int_{[0,R]^{d+1}}|\sum_{\xi\in\Pc_R(\tau')} a_\xi e(\xi\cdot x)|^{\gamma_d}dx\les \|a_\xi\|_{l^2}^{\gamma_d}.
		\end{equation}
		More precisely, the set $\Pc_R(\tau')$ is the preimage $\pi_d^{-1}$ of the set on $\pi_d(\tau')$ guaranteed by the induction hypothesis. 
		Let $$\Sc_R(\tau)=\bigcup_{\tau'\subset \tau}\Pc_R(\tau').$$
		Note first that $$|\Sc_R(\tau)|\sim R^{\frac1{d}-\frac1{d+1}}R^{\frac{2d}{\gamma_d}-\frac1d}.$$
		Also, the points in $\Sc_R(\tau)$ have the first coordinate in  $R^{-1}\Z$.
		\\
		\\
		Step 2.  We use Theorem \ref{tbdg} to separate the contribution from $\tau$ into the contributions from various $\tau'$. Via Fubini, this naturally extends to $l^2(L^{\gamma_d})$ decoupling (sometimes referred to as {\em cylindrical decoupling}) for $\Gamma^{d+1}$, since $\pi_d(\Gamma^{d+1})=\Gamma^d$.
		We may thus write
		$$\int_{[0,R]^{d+1}}|\sum_{\xi\in\Sc_R(\tau)} a_\xi e(\xi\cdot x)|^{\gamma_d}dx\les\left(\sum_{\tau'\subset \tau}(\int_{[0,R]^{d+1}}|\sum_{\xi\in\Pc_R(\tau')} a_\xi e(\xi\cdot x)|^{\gamma_d}dx)^{2/\gamma_d}\right)^{\gamma_d/2}.$$
		This together with \eqref{e6} implies that 
		$$\frac1{R^{d+1}}\int_{[0,R]^{d+1}}|\sum_{\xi\in\Sc_R(\tau)} a_\xi e(\xi\cdot x)|^{\gamma_d}dx\les\|a_\xi\|_{l^2}^{\gamma_d}.$$
		\\
		\\
		Step 3.  We apply Theorem \ref{mi3} to the  orthonormal system $\Phi(\tau)$ consisting of $e(R\xi\cdot x)$ with $\xi\in\Sc_R(\tau)$.  We use  the conclusion of Step 2 asserting that $K_{\gamma_d}(\Phi(\tau))\les 1$. Theorem \ref{mi3} with $q=\gamma_d$ and $p=\gamma_{d+1}$ delivers a subset $\Phi^*(\tau)\subset \Phi(\tau)$ with size $\sim |\Sc_R(\tau)|^{\gamma_d/\gamma_{d+1}}$ and $K_{\gamma_{d+1}}(\Phi^*(\tau))\les 1$. We call $\Pc_R(\tau)$ the frequencies corresponding to $\Phi^*(\tau)$. An easy computation shows that $|\Pc_R(\tau)|\sim R^{\frac{2(d+1)}{\gamma_{d+1}}-\frac1{d+1}}$, as desired.
		
	\end{proof}
	\begin{proof}(of Theorem \ref{t2})
		We first consider the case $p=\gamma_d$.	
		Consider a partition of $\Gamma^d$ into arcs $\tau$ of length $\sim R^{-1/d}$. Let $\Pc_R(\tau)$ be as in Proposition \ref{p1}, and write
		$\Pc_R=\bigcup_{\tau}\Pc_R(\tau)$. Note that $|\Pc_R|\sim R^{\frac{2d}{\gamma_d}}$, as desired. Combining \eqref{e7} with $l^2(L^{\gamma_d})$ decoupling for $\Gamma^d$ verifies \eqref{e4} for $\gamma_d$.
		
		The validity of \eqref{e4} for $p>\gamma_d$ follows by invoking again Theorem \ref{mi3}.
		
	\end{proof}		
	
	\section{$l^p$ analogs}
	\label{s4}
	To address the range of $p$ for which the $K_p$ constants cannot be $\les 1$, we replace the $l^2$ norm with the $l^p$ norm. We illustrate this with two more questions.
	\begin{qu}
		\label{q2}
		Consider a smooth manifold $\Mc$ in $[0,1]^d$, and $p\ge 2$. 
		\begin{itemize}
			
			\item Q4. Is there a set $\Pc_R\subset \Mc$ for $R\gg 1$, such that $|\Pc_R|\sim R^{2d/p}$ and 
			\begin{equation}
				\label{ckljfhreughrtuifirsthkktopptohi}
				\frac{1}{R^d}\int_{[0,R]^d}|\sum_{\xi\in\Pc_R}a_\xi e(\xi\cdot x)|^pdx\lesssim |\Pc_R|^{\frac{p}{2}-1}\|a_\xi\|_{l^p}^p
			\end{equation}
			holds for each  $a_\xi\in\C$?
			
			\item Q5. How about if we allow $R^\epsilon$ losses
			\begin{equation}
				\label{ckljfhreughrtui0h0h0-}
				\frac{1}{R^d}\int_{[0,R]^d}|\sum_{\xi\in\Pc_R}a_\xi e(\xi\cdot x)|^pdx\les |\Pc_R|^{\frac{p}2-1}\|a_\xi\|_{l^p}^p.
			\end{equation}
		\end{itemize}
	\end{qu}
	Given $p\ge 2$ and a collection $\Phi$ of functions $\varphi_1,\ldots,\varphi_N$, we let $K_p^{*}(\Phi)$ be the smallest constant for which
	$$\|\sum_{n=1}^Na_n\varphi_n\|_p\le K_p^*(\Phi)N^{\frac12-\frac1p}(\sum_{n=1}^{N}|a_n|^p)^{1/p}$$
	holds for each $a_n\in\C$. Note that $K_p^*(\Phi)\le K_p(\Phi)$, so our earlier results for Question Q1 provide answers to Question Q4 in a certain range of $p$.
	\smallskip
	
	We make the following conjecture. It is analogous to part (a) of Theorem \ref{mi3}. While part of the proof of Theorem \ref{mi3} translates naturally to the $l^p$-setting, there are critical steps that do not.
	\begin{con}
		\label{mir4}	
		Let $2\le q< p<\infty$ and let $\Phi$ be a finite, uniformly bounded orthonormal system. Then there exists $\Psi\subset \Phi$ satisfying 
		$$K_p^*(\Psi)\lesssim 1\text{  and  }|\Psi|\sim K_q^*(\Phi)^{-2q/p}|\Phi|^{q/p}.$$   	
	\end{con}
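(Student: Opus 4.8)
The natural plan is to mimic the proof of part (a) of Theorem \ref{mi3}, whose $l^2$ version already produces a set $\Psi$ of the asserted size with $K_p(\Psi)\lesssim 1$, hence a fortiori $K_p^*(\Psi)\lesssim 1$; thus the hard case is genuinely when one can only afford an $l^p$-type hypothesis on $\Phi$, i.e.\ when $K_q^*(\Phi)$ is much smaller than $K_q(\Phi)$. First I would set $\delta = |\Phi|^{q/p-1}K_q^*(\Phi)^{-2q/p}$ and choose a random subset $\Psi=\Psi(\omega)$ by keeping each $\varphi_n$ independently with probability $\delta$, so that $\E|\Psi(\omega)|\sim \delta|\Phi| \sim K_q^*(\Phi)^{-2q/p}|\Phi|^{q/p}$ and, by Chernoff--Hoeffding (cf.\ \eqref{CH}), $|\Psi(\omega)|$ concentrates around this value with overwhelming probability. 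The goal is then to show that $\P(K_p^*(\Psi(\omega))\lesssim 1)$ is also close to $1$, so that both events hold simultaneously.

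The key analytic step is the probabilistic estimate controlling $\E_\omega\|\sum_{n}\delta_n(\omega)a_n\varphi_n\|_p^p$, or more robustly its higher moments, uniformly over coefficient vectors $a$ with $\|a\|_{l^p}\le 1$. In the $l^2$ setting one uses the entropy/chaining machinery of Bourgain (the selector process $\sum \delta_n a_n\varphi_n$) together with the hypothesis $K_q(\Phi)$, which enters through a majorizing-measure or Dudley-type bound on the relevant metric. Here I would instead run the same chaining argument but with the $l^p$ ellipsoid $\{\|a\|_{l^p}\le 1\}$ replacing the $l^2$ ball: the hypothesis $K_q^*(\Phi)\lesssim$ small translates into a bound on $\|\sum a_n\varphi_n\|_q$ for all such $a$, which feeds the increments of the process at the ``$q$-scale.'' One interpolates between this $q$-information and the trivial $L^\infty$ bound $\|\varphi_n\|_\infty\le 1$ (which controls the process at the finest scale) to obtain the desired $L^p$ bound with the right constant. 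A discretization/peeling over dyadic ranges of $|a_n|$, exactly as in Bourgain's treatment, reduces matters to the case where the nonzero $a_n$ are comparable, where the $l^p$ and $l^2$ normalizations differ only by the factor $N^{1/2-1/p}$ that is built into the definition of $K_p^*$.

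The main obstacle — and the reason this is only a conjecture — is precisely the step the authors flag: in the $l^2$ proof one exploits that the $l^2$ norm is Hilbertian, so that the relevant random quadratic form can be decoupled, symmetrized, and estimated via Gaussian comparison and a clean dual formulation of $K_q$; several of these tools (e.g.\ the passage from $\E\|\cdot\|_p$ to a supremum over an $l^2$-ellipsoid, and the use of the Hilbert structure to iterate the selection) have no exact $l^p$ analogue for $p\ne 2$. Concretely, the hardest point is to show that the $l^p$-normalized selector process still satisfies a \emph{self-improving} bound — that selecting a density-$\delta$ subset reduces the operator norm from $\{l^p\to L^q\}$ to $\{l^p\to L^p\}$ at the same rate as in the $l^2$ case — without access to orthogonality-based cancellation. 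I expect that for even integers $p$ one can bypass this by expanding the $p$-th power and counting additive tuples (as in the $\Lambda(4)$ lemma above), which would at least settle the conjecture in that subfamily; the general $p$ case would require a genuinely new substitute for the Hilbertian step.
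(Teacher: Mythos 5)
This statement is stated in the paper as a conjecture, with no proof given; the authors explicitly say that while part of the proof of Theorem \ref{mi3} translates to the $l^p$ setting, critical steps do not. Your proposal, read carefully, is not a proof either: it is a strategy outline that correctly identifies the plan (random selection with density $\delta=|\Phi|^{q/p-1}K_q^*(\Phi)^{-2q/p}$, concentration of $|\Psi(\omega)|$, and a probabilistic bound on the selector process over the $l^p$ ball) and then concedes that the central analytic step --- the ``self-improving'' bound for the $l^p$-normalized selector process --- is exactly what is missing. That concession is the gap, and it is genuine, not a presentational shortcut.

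To make the obstruction concrete in the language of the paper's own machinery: the proof of Theorem \ref{tnew}/\ref{mi3} runs through Talagrand's Theorem \ref{Tal_keythm}, where the norms $\|U_S\|_C$ and the sets $X^*_{1,C}$ are defined by the $l^2$ constraint $\sum_n|x^*(x_n)|^2\le C$, and where $2$-convexity of $X^*$ is the structural input; moreover, the decisive interpolation step in Lemma \ref{f_C1C2} splits $f=v_1+v_2$ with $v_2=\sum\beta_n\varphi_n$ having small $l^2$ coefficient norm and then uses $\|v_2\|_q\lesssim K_q(\Phi)\bigl(\sum|\beta_n|^2\bigr)^{1/2}$. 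The hypothesis $K_q^*(\Phi)$ controls $\|\sum\beta_n\varphi_n\|_q$ only through $\|\beta\|_{l^q}$ with the $N^{1/2-1/q}$ normalization, and there is no way to feed this into that decomposition, since the coefficient information produced by the $l^2$-dual constraint cannot be converted into $l^q$ information without losing exactly the factor one is trying to save (your interpolation of the $q$-scale bound with the $L^\infty$ bound runs into the same loss). Your correct observation that the $l^2$ theorem already settles the case $K_q^*(\Phi)\sim K_q(\Phi)$ does not help in the genuinely $l^p$ regime, and your suggestion that even integer $p$ might be handled by expanding the $p$-th power is plausible but unsubstantiated. So the proposal should be regarded as a (reasonable) heuristic discussion of why the statement is plausible and hard, not as a proof; as such it is consistent with the paper, which leaves the statement open.
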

	We have the following analog of Proposition \ref{4ot095gtk1}.
	\begin{pr}
		\label{4ot095gtk2}	
		Assume 	
		\begin{equation}
			\label{E4}
			\|E^{\Pi}f\|_{L^q(\R^d)}\lesssim \|f\|_{L^q([0,1]^m)}
		\end{equation}
		holds for some $q\ge 2$. Then, conditional to Conjecture \ref{mir4}, Question Q4 has a positive answer for each $p>q$. 
	\end{pr}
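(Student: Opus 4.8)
The strategy is to rerun the proof of Proposition~\ref{4ot095gtk1} with two substitutions: the $L^2\to L^q$ transfer is replaced by its $L^q\to L^q$ counterpart, and Theorem~\ref{mi3}(a) is replaced by Conjecture~\ref{mir4}. Write $N=|\Phi_{R,\Mc}|\sim R^m$. The first task is to show that \eqref{E4} forces
$$K_q^*(\Phi_{R,\Mc})\sim R^{\frac m2-\frac dq},$$
so that $K_q^*(\Phi_{R,\Mc})$ is as small as constructive interference permits for a system of this size; one then feeds this into Conjecture~\ref{mir4} exactly as Corollary~\ref{2.2} fed maximal $L^q$ orthogonality into Theorem~\ref{mi3}.

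For the upper bound, apply Theorem~\ref{lkgt grtithiyphip[]} with $r=q$ to the $\gtrsim R^{-1}$-separated points $\{(\eta,\Pi(\eta)):\eta\in[0,1]^m\cap(R^{-1}\Z)^m\}$ on $\Mc$: its hypothesis is precisely \eqref{E4}, and it yields $\|\sum_\xi a_\xi e(\xi\cdot y)\|_{L^q([0,R]^d)}\lesssim R^{m/q'}\|a_\xi\|_{l^q}$. Undoing the dilation $y=Rx$ rewrites the left side as $R^{d/q}\|\sum_\xi a_\xi\varphi_\xi\|_{L^q([0,1]^d)}$, so $\|\sum_\xi a_\xi\varphi_\xi\|_q\lesssim R^{m/q'-d/q}\|a_\xi\|_{l^q}$; dividing by $N^{1/2-1/q}\|a_\xi\|_{l^q}\sim R^{m/2-m/q}\|a_\xi\|_{l^q}$ and simplifying the exponent gives $K_q^*(\Phi_{R,\Mc})\lesssim R^{m/2-d/q}$. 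For the matching lower bound, observe that for constant coefficients the normalizations $\|a_\xi\|_{l^2}$ and $N^{1/2-1/q}\|a_\xi\|_{l^q}$ agree, so the constructive-interference estimate $K_q(\Phi_{R,\Mc})\gtrsim R^{m/2-d/q}$ stated just before Corollary~\ref{2.2} (a consequence of Lemma~\ref{lci}) gives $K_q^*(\Phi_{R,\Mc})\gtrsim R^{m/2-d/q}$ verbatim.

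Now invoke Conjecture~\ref{mir4} for the finite, uniformly bounded orthonormal system $\Phi_{R,\Mc}$ and exponents $q<p$: it produces $\Psi\subset\Phi_{R,\Mc}$ with $K_p^*(\Psi)\lesssim 1$ and $|\Psi|\sim K_q^*(\Phi_{R,\Mc})^{-2q/p}N^{q/p}\sim R^{2d/p}$, the exponent being $-\tfrac{2q}{p}\bigl(\tfrac m2-\tfrac dq\bigr)+\tfrac{mq}{p}=\tfrac{2d}{p}$. Let $\Pc_R\subset\Mc$ be the set for which $R\Pc_R$ is the frequency set of $\Psi$; then $|\Pc_R|\sim R^{2d/p}$, and unwinding $K_p^*(\Psi)\lesssim 1$ through the dilation $y=Rx$ produces exactly \eqref{ckljfhreughrtuifirsthkktopptohi}, so Q4 holds for every $p>q$. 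Both estimates on $K_q^*(\Phi_{R,\Mc})$ are unconditional; the sole obstruction --- and the reason the statement is only conditional --- is Conjecture~\ref{mir4}, which is precisely the place where the proof of Theorem~\ref{mi3} does not transfer to the $l^p$ setting.
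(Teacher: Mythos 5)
Your proposal is correct and follows essentially the same route as the paper: apply Theorem \ref{lkgt grtithiyphip[]} with $r=q$ to obtain $K_q^*(\Phi_{R,\Mc})\lesssim R^{\frac m2-\frac dq}$, then feed this into Conjecture \ref{mir4} and rescale. Your extra care with the matching constructive-interference lower bound (so that $|\Psi|\sim R^{2d/p}$ exactly) is a detail the paper leaves implicit, but it does not change the argument.
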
 
	\begin{proof}
		Theorem \ref{lkgt grtithiyphip[]} with $r=q$ implies that 
		$$K_q^*(\Phi_{R,\Mc})\lesssim R^{\frac{m}{2}-\frac{d}{q}}.$$
		The result follows by combining this estimate with Conjecture \ref{mir4}.
	\end{proof}
	Let us examine the previous examples. 
	\smallskip
	
	\textbf{Example 1}: When $\Mc=\Gamma^d$, \eqref{E4} is known to hold for $q>\frac{d^2+d+2}{2}$. See \cite{Dru}. Thus, conditional to Conjecture \ref{mir4}, Question Q4 has a positive answer in the range $p>\frac{d^2+d+2}{2}$. This was verified in \cite{D} for constant coefficients. On the other hand, it was proved in \cite{D} that the answer is negative if $p<\frac{d^2+d+2}{2}$. Thus, apart from the critical index, the question is (conditionally) settled.
	
	\smallskip
	
	\textbf{Example 2:} When $\Mc$ is a hypersurface with non-zero Gaussian curvature,
	\eqref{E4} is conjectured to hold in the range $q>\frac{2d}{d-1}$. This has only been verified when $d=2$. 
	Thus, we expect Question Q4 to have a positive answer in the range $p>\frac{2d}{d-1}$. For constant coefficients this was confirmed in \cite{D}.
	Also, the result in \cite{D} together with Theorem 1 in \cite{AN} shows that Question Q4 has a negative answer when $p\le \frac{2d}{d-1}$.
	\medskip
	
	We present some partial unconditional answers to Question Q5, for low dimensional curves. In particular, we fully settle Question Q5 for the parabola.
	\begin{te}
		For each $4\le p<6$	there is $\Pc_R\subset \Gamma^2$ with $|\Pc_R|\sim R^{4/p}$,  such that \eqref{ckljfhreughrtui0h0h0-} holds.
		
		For each $10\le p<12$ there is $\Pc_R\subset \Gamma^3$ with $|\Pc_R|\sim R^{6/p}$,  such that \eqref{ckljfhreughrtui0h0h0-} holds.
	\end{te}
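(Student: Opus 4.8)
The plan is to build $\Pc_R$ by iterating parabolic (for $\Gamma^2$), respectively cubic (for $\Gamma^3$), rescaling, invoking $l^2$ decoupling below the critical exponent at each stage; I describe $\Gamma^2$ in detail, as $\Gamma^3$ is treated the same way. For $S\ge 1$ and $1\le N\le S$ let $D(S,N)$ denote the infimum, over $S^{-1}$-separated subsets $\Pc\subset\Gamma^2$ with $|\Pc|=N$, of the smallest constant $C$ with $\frac1{S^2}\int_{[0,S]^2}|\sum_{\xi\in\Pc}a_\xi e(\xi\cdot x)|^p\,dx\le C\|a_\xi\|_{l^p}^p$ for all $a_\xi$; the assertion of the theorem for $\Gamma^2$ is exactly $D(R,R^{4/p})\les R^{2-4/p}$, which is \eqref{ckljfhreughrtui0h0h0-} with $d=2$. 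The only analytic input is $l^2$ decoupling for $\Gamma^2$ (Theorem \ref{tbd} with $d=2$), which, interpolated with the trivial $L^2$ orthogonality identity, holds without loss for all $2\le p\le p_2=6$ — and $4\le p<6$ lies in this range. I would prove two facts. (Base) If $N\le S^{1/2}$ then $D(S,N)\les N^{p/2-1}$: put the $N$ frequencies in distinct $S^{-1/2}$-caps, decouple, note that each piece is a single exponential, and pass from $l^2$ to $l^p$ by Hölder. (Step) If $S^{1/2}\le N\le S$ then $D(S,N)\les D(S^{1/2},N S^{-1/2})\,S^{p/4-1/2}$: decouple $\Gamma^2$ into its $S^{1/2}$ canonical $S^{-1/2}$-caps, parabolically rescale each cap so that the scale-$S$ problem on it becomes a scale-$S^{1/2}$ problem carrying the inductively chosen $N S^{-1/2}$-point subset, apply the inductive bound there, and close with Hölder over the $S^{1/2}$ caps.

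Iterating (Step) from $(S,N)=(R,R^{4/p})$, the scale at stage $k$ is $R^{2^{-k}}$ and the number of frequencies inside each cap is $R^{4/p-1+2^{-k}}$; the hypothesis $p\ge 4$ keeps this below the number $R^{2^{-k}}$ of available frequencies at every stage, and after $\sim\log_2\frac{p}{p-4}$ stages the count drops below the square root of the current scale, at which point one applies (Base). The accumulated exponent telescopes: the factors $S_k^{p/4-1/2}$ multiply to $R^{(2-2\cdot2^{-k^\ast})(p/4-1/2)}$, which together with the terminal $N_{k^\ast}^{p/2-1}$ loses all dependence on the stopping stage $k^\ast$ and equals $R^{(p/2-1)\cdot 4/p}=R^{2-4/p}$. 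The union of the inductively selected subsets is the required $\Pc_R$, with $R^{4/p}$ points. For $\Gamma^3$ one runs the same scheme: canonical caps now have length $S^{-1/3}$, the decoupling is Theorem \ref{tbdg} with $d=3$ (lossless for $p\le\gamma_3=12$, which forces $p<12$), the affine rescaling of a length-$S^{-1/3}$ arc of the moment curve takes a scale-$S$ problem to a scale-$S^{2/3}$ one, so the recursion reads $D(S,N)\les D(S^{2/3},N S^{-1/3})\,S^{(p/2-1)/3}$, and the corresponding telescoping gives $D(R,R^{6/p})\les R^{3-6/p}$ with $|\Pc_R|\sim R^{6/p}$, in the stated range $10\le p<12$.

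The step I expect to be the main obstacle is making the rescaling in (Step) rigorous. One should run decoupling with rapidly decaying weights $w_B$ rather than sharp cutoffs of $[0,S]^2$, and, after parabolically (resp.\ cubically) rescaling a single cap, verify that the $L^p$ norm of that cap's contribution over the anisotropic sheared region produced by the rescaling is comparable to its norm over the standard box appearing in the inductive hypothesis; this uses that the contribution is locally constant at the dual scale and is routine cylindrical-decoupling bookkeeping, but it is precisely where the normalizations have to balance. A secondary, purely arithmetic matter is to track the two competing constraints — that $l^2$ decoupling be lossless ($p<p_2$, resp.\ $p<\gamma_d$) and that at every intermediate scale the number of selected frequencies not exceed the number of available lattice points — since their interplay is what produces the two stated endpoints, the lower one for $\Gamma^3$ being the more delicate of the two.
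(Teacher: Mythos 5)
Your reduction to the two facts (Base) and (Step) is where the argument lives, and (Base) plus the telescoping arithmetic are fine; the gap is (Step), which is both unjustified by the sketch you give and, in the generality in which you state it, false. Concretely: after parabolically rescaling a canonical $S^{-1/2}$-cap, the spatial box $[0,S]^2$ does not become the box $[0,S^{1/2}]^2$ of the inductive hypothesis; it becomes a sheared slab of dimensions roughly $S^{1/2}\times 1$, i.e.\ an $S^{-1/2}$-fraction of that box. Local constancy at the unit (dual) scale only tells you that the slab average is an average of $\sim S^{1/2}$ local values while the inductive hypothesis controls an average of $\sim S$ of them; bounding the slab average by the box average is exactly a non-concentration statement about how much $L^p$ mass a cap's contribution can place on a thin dual slab, and this is not ``routine cylindrical-decoupling bookkeeping'' --- it is the whole content of small cap decoupling. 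The honest version of your step, using the crude containment of the slab in a single $S^{1/2}$-box, gives only $D(S,N)\les S^{p/4}\,D(S^{1/2},NS^{-1/2})$, and the accumulated extra $S_k^{1/2}$ losses cost nearly a full power of $R$ at the end, so the scheme does not close.

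Moreover (Step) as stated (for all $S^{1/2}\le N\le S$) cannot be repaired by bookkeeping: take $N\sim S$, i.e.\ a maximal $S^{-1}$-separated set on $\Gamma^2$. Iterating your recursion all the way down (the hypotheses stay satisfied at every stage) gives $D(S,S)\les S^{(p/4-1/2)(1+\frac12+\frac14+\cdots)}=S^{p/2-1}$, while constant coefficients and constructive interference on a unit ball force $\frac1{S^2}\int_{[0,S]^2}|\sum_\xi e(\xi\cdot x)|^p dx\gtrsim S^{p-2}$, hence $D(S,S)\gtrsim S^{p-3}$, contradicting $S^{p/2-1}$ for every $p>4$. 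Since nothing in your proposed proof of (Step) uses the sparsity $N\ll S$, this shows the missing ingredient is genuine. It is precisely what the paper imports: it partitions the curve into arcs of length $R^{-\beta}$ with $\beta=\frac{2}{p-2}$ for $\Gamma^2$ (resp.\ $\beta=\frac{2}{p-6}$ for $\Gamma^3$), selects inside each arc a maximal $\Lambda(p)$-subset via Theorem \ref{tB} (resp.\ the construction of Section \ref{s3}), and then assembles the arcs using the sharp small cap decoupling theorems of \cite{DGW} and \cite{GM}, whose $l^p$ constant $R^{\beta(\frac p2-1)}$ matches \eqref{ckljfhreughrtui0h0h0-} exactly; the stated ranges $4\le p<6$ and $10\le p<12$ are the conditions $\beta\in(\frac12,1]$ and $\beta\in(\frac13,\frac12]$ (the latter ensuring the Guth--Maldague spatial cube is $[0,R]^3$), constraints your canonical-scale recursion never sees.
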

	\begin{proof}
		We first prove the result for $\Gamma^2$. Partition the parabola into arcs $\gamma$ of length $\sim R^{-\beta}$ with $\beta=\frac{2}{p-2}$. Note that $\beta\in(\frac12,1]$. Theorem \ref{tB} guaranteed the existence of $\Pc_R(\gamma)\subset \gamma$ with size $\sim R^{(1-\beta)\frac2p}$ such that
		\begin{equation}
			\label{rokfitugijuhi}
			\frac1{R^2}\int_{[0,R]^2}|\sum_{\xi\in\Pc_R(\gamma)}a_\xi e(\xi\cdot x)|^pdx\lesssim \|a_\xi\|_{l^2}^p\le |\Pc_R(\gamma)|^{\frac{p}{2}-1}\|a_\xi\|_{l^p}^p.
		\end{equation}
		Then $\Pc_R=\cup_\gamma\Pc_R(\gamma)$ has the right size.
		
		The key new ingredient  is the small cap decoupling for the parabola proved in \cite{DGW}, that we  recall below. 
		
		For each $\beta\in(\frac12,1]$ and each complex measure $\mu$ supported on $\Gamma^2$ we have
		$$\|\widehat{\mu}\|_{L^p([0,R]^2)}^p\les R^{\beta(\frac{p}{2}-1)}\sum_{\gamma}\|\widehat{\mu_\gamma}\|^p_{L^p([0,R]^2)}.$$
		When applied to $\mu=\sum_{\xi\in\Pc_R}a_\xi\delta_\xi$ and combined with \eqref{rokfitugijuhi}, it finishes the proof in this case. 
		\smallskip	
		
		We next focus on $\Gamma^3$. Let $\beta=\frac{2}{p-6}$. Note that $\beta\in(\frac13,\frac12]$.	
		The argument in Section \ref{s3}  shows that each arc $\gamma\subset \Gamma^3$ of length $\sim R^{-\beta}$ contains a set $\Pc_R(\gamma)$ with $|\Pc_R(\gamma)|\sim (R^{\frac23-\beta})^{6/p}$ such that 
		\begin{equation}
			\label{rokfitugijuhiou}
			\frac1{R^3}\int_{[0,R]^3}|\sum_{\xi\in\Pc_R(\gamma)}a_\xi e(\xi\cdot x)|^pdx\lesssim \|a_\xi\|_{l^2}^p\le |\Pc_R(\gamma)|^{\frac{p}{2}-1}\|a_\xi\|_{l^p}^p.
		\end{equation}
		Consider a partition of $\Gamma^3$ into arcs $\gamma$ of length $R^{-\beta}$.
		If we let $\Pc_R=\cup_{\gamma}\Pc_R(\gamma)$, then $|\Pc_R|\sim R^{6/p}$.
		
		The key new ingredient is the following small cap decoupling for $\Gamma^3$ proved in \cite{GM}. Let $r=R^{\max\{1,2\beta\}}$. For each $\beta\in(\frac13,1]$ and each complex measure $\mu$ supported on $\Gamma^3$ we have
		$$\|\widehat{\mu}\|_{L^p([0,r]^3)}^p\les R^{\beta(\frac{p}{2}-1)}\sum_{\gamma}\|\widehat{\mu_\gamma}\|^p_{L^p([0,r]^3)}.$$
		Since in our case $2\beta\le 1$, we have $r=R$. Combining this with \eqref{rokfitugijuhiou} concludes the proof of \eqref{ckljfhreughrtui0h0h0-} in this case. 
		
	\end{proof}
	It seems difficult to cover the full range $\frac{d^2+d+2}{2}<p<d(d+1)$ for \eqref{ckljfhreughrtui0h0h0-}  using decoupling methods, when $d\ge3$. When $d=3$, the  Guth--Maldague small cap decoupling holds in the range $p\in [8,12)$, but it is inefficient for us in the range $[8,10)$, since the spatial cube demanded by their decoupling is larger than $[0,R]^3$. The range $p\in [7,8)$ seems even more difficult to tackle.  Things are even more dramatic in higher dimensions, where small cap results  for the moment curve are rather sparse. 
	\smallskip
	
	As for hypersurfaces $\Sigma$, the $l^p(L^p)$ decoupling proved in \cite{BD2} may be combined with the methods in this paper to prove \eqref{ckljfhreughrtui0h0h0-} for $p\ge \frac{2(d+1)}{d-1}$, when $\Sigma$ has non-zero Gaussian curvature. This includes the case of the hyperbolic paraboloid in all dimensions.  To prove \eqref{ckljfhreughrtui0h0h0-} in the remaining range would require a sharp small cap decoupling theory for hypersurfaces in the range $\frac{2d}{d-1}\le p< \frac{2(d+1)}{d-1}$. Such results have recently been proved in \cite{GMO2} in the incomplete range $\frac{2d+4}{d}\le p< \frac{2(d+1)}{d-1}$. While small cap decoupling at the endpoint $p=\frac{2d}{d-1}$ is known to be equivalent with the Restriction Conjecture for $\Sigma$, it is an interesting question as to whether the validity of \eqref{ckljfhreughrtui0h0h0-} for $\Sigma$ with $p=\frac{2d}{d-1}$ would also imply the Restriction Conjecture.
	
	\section{Probabilistic estimates}
	\label{prob}
	In order to provide a better context to the results in this section, we start by recalling the classical Chernoff-Hoeffding estimate for random selectors. Let $(X_i)_{i=1}^N$ be $\{0,1\}$-valued i.i.d. random variables on a probability space, taking value 1 with probability $\delta$. Write $X=X_1+\ldots+X_M$, and note that $\E(X)=M\delta$. Then there is $c>0$ such that for each $M\ge \delta^{-1}$ we have
	\begin{equation}
		\label{CH}
		\P(X\in [M\delta/2,3M\delta/2])\ge 1-e^{-cM\delta}.
	\end{equation}
	We prove two theorems. 
	
	In order to be able to use the results in \cite{Tal_book}, we will work with real-valued orthonormal systems. The extension of these theorems to the case of complex exponentials is elementary, and presented at the end of this section, see Remark \ref{refinal}.  
	
	To be fully consistent with working in the framework of real Banach spaces, the constants $K(\Phi)$ in this section are defined using only real coefficients in \eqref{realvscomplex}. This however is ultimately irrelevant, as the constants defined with real versus complex coefficients are comparable to each other.
	
	\begin{te}
		\label{tnew}
		Let $\Phi=\{\varphi_1,\ldots,\varphi_N\}$ be a uniformly bounded orthonormal system consisting of real-valued functions,  and let $2\le q\le  p<\infty$.\\
		(a) Assume $p>q$. Then there exists $\Psi\subset \Phi$ satisfying 
		$$K_p(\Psi)\lesssim 1\text{  and  }|\Psi|\sim K_q(\Phi)^{-2q/p}|\Phi|^{q/p}.$$ 
		The implicit constants depend on $p,q$, but not on $\Phi$.
		\\
		(b) $(p=q)$ Assume that there exists $\kappa > 0$ such that $|\Phi|^{\kappa}\leq K_q (\Phi)^2$. Then there exists $\Psi\subset \Phi$ satisfying 
		$$K_q(\Psi)\lesssim 1\text{  and  }|\Psi|\sim K_q(\Phi)^{-2}|\Phi|.$$
		The implicit constants  depend on $q$ and $\kappa$, but not on $\Phi$.
	\end{te}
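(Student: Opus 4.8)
The plan is to follow Bourgain's selector method, but to extract the sharpest possible probabilistic bound by passing through the theory of random sections of a Banach space, as developed in \cite{Bo} and refined in \cite{Tal_book}. Let $\delta\in(0,1]$ be a parameter to be chosen, and let $(\delta_i)_{i=1}^N$ be i.i.d.\ $\{0,1\}$-selectors with $\P(\delta_i=1)=\delta$; set $\omega\mapsto\Psi(\omega)=\{\varphi_i:\delta_i(\omega)=1\}$, so that $\E|\Psi(\omega)|=N\delta$. By \eqref{CH}, with overwhelming probability $|\Psi(\omega)|\sim N\delta$, so the task reduces to showing that with positive probability (in fact, probability close to $1$) one has $K_p(\Psi(\omega))\lesssim 1$. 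The central quantity is the random norm
$$
\Xi(\omega)=\E_\epsilon\Bigl\|\sum_{i}\epsilon_i\delta_i(\omega)\,\varphi_i\Bigr\|_{L^p},
$$
where $(\epsilon_i)$ are independent Rademachers; by a standard symmetrization argument (Giné–Zinn, as used by Bourgain) controlling $\E_\omega\Xi(\omega)$, together with a deviation inequality for $\Xi$ around its mean, yields the sup bound $\|\sum a_i\varphi_i\|_p\lesssim(\sum a_i^2)^{1/2}$ uniformly over the selected system.

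First I would reduce the $L^p$ norm estimate to a chaining estimate. Writing $T=\{a\in\R^N:\sum a_i^2\le 1\}$ and viewing $\sum_i\epsilon_i\delta_i(\omega)\varphi_i$ as a stochastic process indexed by a finite subset of $L^{p'}$, one bounds $\Xi(\omega)$ by Dudley's entropy integral, or more efficiently by the majorizing-measure / $\gamma_2$-functional machinery of \cite{Tal_book}. The key structural input is the two-sided comparison: for the restricted system, the $L^q(\Phi)$ hypothesis $K_q(\Phi)\le M$ controls the metric entropy of the index set at the relevant scales, and one optimizes the split between the "small coefficients" regime (handled by the $L^2$ orthogonality, i.e.\ Khintchine) and the "large coefficients" regime (handled by interpolating $L^q$ against $L^\infty$, using $\|\varphi_i\|_\infty\le 1$). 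This is exactly where the exponent $|\Psi|\sim K_q(\Phi)^{-2q/p}|\Phi|^{q/p}$ emerges: choosing $\delta\sim K_q(\Phi)^{-2q/p}|\Phi|^{q/p-1}$ balances the contributions so that $\E_\omega\Xi(\omega)\lesssim(N\delta)^{1/2}$, which is the correct normalization for $K_p\lesssim1$.

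For part (b), the equality case $p=q$, the above balancing degenerates: the "gain" from interpolating $L^q$ against $L^2$ vanishes, and one can no longer absorb the logarithmic losses that chaining produces. This is where the hypothesis $|\Phi|^\kappa\le K_q(\Phi)^2$ enters. The point is that when $K_q(\Phi)$ is at least a small power of $N$, the target size $|\Psi|\sim K_q(\Phi)^{-2}|\Phi|$ is at most $N^{1-\kappa}$, so there is enough room in the deviation estimate \eqref{CH} — whose error term is $e^{-cN\delta}$ with $N\delta\sim K_q(\Phi)^{-2}N\ge N^{\kappa}$ — to beat the $e^{O(\log^2 N)}$-type losses coming from the entropy estimate at the endpoint. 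Concretely, I would run the same selector argument but track constants carefully, showing $\P(K_q(\Psi(\omega))\gtrsim\log N\text{-power})$ decays like $e^{-cN^{\kappa}}$, and then (optionally) iterate a bounded number of times, or invoke a self-improvement/bootstrap as in Bourgain, to remove the logarithmic factor and land at $K_q(\Psi)\lesssim1$.

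The main obstacle will be the endpoint part (b): obtaining the deviation inequality for $\Xi(\omega)$ with the right (sub-exponential in $N\delta$) tail, and then removing the unavoidable logarithmic losses of the chaining argument. This is precisely the technical heart referred to in the excerpt as the new contribution, and I expect it to require the concentration-of-measure results for empirical processes from \cite{Tal_book} (Talagrand's inequality for suprema of empirical processes) rather than the cruder moment methods, together with the bootstrap that trades a $\log$-power loss for a slightly smaller selection probability. The passage from real-valued systems to complex exponentials is routine and is deferred to Remark \ref{refinal}, as the excerpt indicates.
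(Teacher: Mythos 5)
Your overall strategy (random $\{0,1\}$-selectors with $\P(\delta_i=1)=\delta\sim K_q(\Phi)^{-2q/p}N^{q/p-1}$, size control via \eqref{CH}, and Talagrand's machinery from \cite{Tal_book}) is the route the paper takes, but two of your key steps have genuine gaps. First, the central quantity you propose, $\Xi(\omega)=\E_\epsilon\|\sum_i\epsilon_i\delta_i(\omega)\varphi_i\|_{L^p}$, is a Rademacher average with unit coefficients; controlling it, even with a deviation inequality around its mean, does not give $K_p(\Psi(\omega))\lesssim 1$, which is a supremum over the entire $\ell^2$ unit ball of coefficients (this is precisely the gap between constant-coefficient and arbitrary-coefficient estimates that the paper is at pains to distinguish). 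What is actually needed is control of the empirical-process quantity $\|U_{\Psi(\omega)}\|_C=\sup\,(\sum_{i\in\Psi(\omega)}|x^*(\varphi_i)|^2)^{1/2}$ over a truncated dual ball of $X=L^{p_1}$ with $p_1>p$ (Theorem \ref{Tal_keythm}), at two truncation levels $C_1,C_2$, followed by a deterministic conversion step (Lemma \ref{f_C1C2} and Corollary \ref{rjutgurtu}) that splits $f$ into an $L^{p_1}$-bounded piece and a spectral piece whose $L^q$ norm is estimated via $K_q(\Phi)$ and interpolated against $L^\infty$; your sketch gestures at a process indexed by a subset of $L^{p'}$, but the reduction from $K_p$ to the quantity you actually define is missing and, as stated, false.

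Second, your explanation of the endpoint hypothesis in (b) is backwards. With $\delta=K_q(\Phi)^{-2}$, the assumption $N^\kappa\le K_q(\Phi)^2$ gives $N\delta\le N^{1-\kappa}$, not $N\delta\ge N^\kappa$; a power lower bound $N\delta\gtrsim N^{2/q}$ holds unconditionally since $K_q(\Phi)\le N^{1/2-1/q}$, so "Chernoff beating entropy losses" is not where the hypothesis is needed. Its real role is to create the power-of-$N$ gap that Talagrand's selection theorem requires: one must have $\delta\le (B\epsilon N^{\epsilon})^{-1}$ for a fixed $\epsilon>0$, and for $p>q$ this room comes from $N^{1-q/p}$, whereas at $p=q$ it must come from $K_q(\Phi)^2$ itself being a power of $N$ (the paper takes $C_1=\delta^{-1}N^{-\kappa/2}$ and $\epsilon_1=\kappa/2$ in Corollary \ref{Cor_prob_UJ}); the same fact that $C_1\gtrsim N^{\kappa/2}$ is what makes the high-level tail $\int_D^\infty$ in the level-set analysis of $\|f\|_p^p$ harmless, which is how the logarithmic losses are avoided. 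Your proposal instead concedes a $\log$-power loss and appeals to an unspecified bootstrap "as in Bourgain" to remove it; that step is exactly the new content of part (b), it is not supplied, and it is not clear such a bootstrap exists, so as written the endpoint case is not proved.
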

	
	Theorem \ref{tB} was generalized to Lorentz spaces by Talagrand \cite{Tal}. The result (a) was  proved by Bourgain \cite{Bo2}, but it can be also proved by Talagrand's argument. Here, we use Talagrand's argument in \cite{Tal_book} where his argument was simplified. The idea in the proof of Theorem \ref{tnew} is similar to \cite{Ryou}. 
	\smallskip
	
	The same approach can be used to prove the following strengthening of Theorem \ref{tnew}, that we record for possible future applications. 
	\begin{te}
		\label{mir2}$\Phi=\{\varphi_1,\ldots,\varphi_N\}$ be a uniformly bounded orthonormal system consisting of real-valued functions, and let $2\le q\le  p<\infty$.	 Let $(\delta_i)_{i=1}^N$ be $\{0,1\}$-valued i.i.d. random variables on the probability space $\Omega$ taking value 1 with probability $K_q(\Phi)^{-\frac{2q}{p}} |\Phi|^{-1+\frac{q}{p}}$. For $\omega\in\Omega$ we write 
		$$\Psi(\omega)=\{\varphi_i:\;\delta_i(\omega)=1\}.$$
		\\
		(a) If $p >q$, we have
		$$\P(K_p (\Psi(w)) \geq u)  \lesssim_{p,q} \exp (-C_{p,q} u^2 ),$$
		with $C_{p,q}>0$ depending on $p$ and $q$ but not on $N$ and $\Phi$.
		\\
		(b) $(p=q)$ Assume that there exists $\kappa >0$ such that $|\Phi|^{\kappa}  \leq K_{q} (\Phi)^2$. Then, we have
		$$\P\left(K_q(\Psi(\omega))\ge u\right)\lesssim_{q,\kappa} \exp (-C_{q} u^2 ),$$
		with the implicit constants depending on  $q$ and $\kappa$ but not on $\Phi$.
	\end{te}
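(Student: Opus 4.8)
The plan is to prove Theorem \ref{mir2} by running the Bansal--Talagrand selector argument from \cite{Tal_book} and keeping careful track of the tail probability rather than just the expectation. The key point is that the statement is quantitatively a \emph{probabilistic} refinement of Theorem \ref{tnew}: the deterministic conclusions in Theorem \ref{tnew} follow by choosing $u$ to be a large absolute constant, but here we want exponential-in-$u^2$ control. I would first set $\delta = K_q(\Phi)^{-2q/p}|\Phi|^{-1+q/p}$ and recall the chaining/entropy estimate that bounds $\E_\omega K_p(\Psi(\omega))$ (this is exactly what Talagrand's simplified argument in \cite{Tal_book} delivers, after reduction from the Lorentz-space formulation to the $L^p$ formulation). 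The estimate for $K_p(\Psi(\omega))$ is, after dualizing, the supremum over the unit ball of $\ell^{p'}$ (or the appropriate Lorentz predual) of a random process indexed by coefficient vectors, and the selectors $\delta_i$ enter linearly. The heart of the matter is a concentration inequality for this supremum around its mean.

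The main technical step is to establish the subgaussian tail $\P(K_p(\Psi(\omega)) \ge \E K_p(\Psi(\omega)) + t) \lesssim \exp(-c t^2)$. I would obtain this from a bounded-difference / McDiarmid-type inequality or, more robustly, from Talagrand's convex-distance concentration inequality for product measures applied to the $\{0,1\}^N$-valued selector vector. The function $\omega \mapsto K_p(\Psi(\omega))$ is not Lipschitz in the naive sense because flipping one coordinate can change the relevant $\ell^2$- or $\ell^p$-norm, but after the standard symmetrization and the observation that $K_p$ is a supremum of linear functionals with controlled coefficients, one gets that changing a single $\delta_i$ perturbs $K_p(\Psi(\omega))$ by at most $O(1)$ in the regime of interest (this uses $\|\varphi_i\|_\infty \le 1$ and the normalization built into the selection probability). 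Then McDiarmid gives $\exp(-c t^2/N \cdot (\text{something}))$, which is not yet good enough; the fix is to use the self-bounding / Talagrand convex-distance version, which replaces $N$ by the value of the supremum itself, yielding the desired $\exp(-ct^2)$ with no $N$-dependence. Combining with $\E K_p(\Psi(\omega)) \lesssim_{p,q} 1$ (part (a)) or $\lesssim_{q,\kappa} 1$ (part (b), where the hypothesis $|\Phi|^\kappa \le K_q(\Phi)^2$ is needed exactly as in Theorem \ref{tnew} to close the endpoint chaining), one concludes that for $u$ larger than a constant multiple of that mean, $\P(K_p(\Psi(\omega)) \ge u) \le \P(K_p(\Psi(\omega)) \ge \E K_p(\Psi(\omega)) + u/2) \lesssim \exp(-c u^2)$, and for small $u$ the bound is trivial after adjusting constants.

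For part (b) the only additional wrinkle is that the entropy integral in Talagrand's argument diverges logarithmically at the endpoint $p=q$, and the hypothesis $|\Phi|^\kappa \le K_q(\Phi)^2$ is precisely what truncates this divergence: it forces $\delta|\Phi| = K_q(\Phi)^{-2}|\Phi| \le |\Phi|^{1-\kappa}$, so the effective number of selected functions is polynomially smaller than $|\Phi|$, and the $\log$ factors in the chaining bound get absorbed. I would carry this out by following the $p=q$ case of Theorem \ref{tnew} verbatim for the expectation bound, then applying the same concentration inequality as in part (a); the concentration step is insensitive to whether $p>q$ or $p=q$, so no new idea is needed there.

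The step I expect to be the main obstacle is getting the concentration inequality with a constant independent of $N$ and $\Phi$ — a crude bounded-difference argument loses a factor of $\sqrt{N}$, and one genuinely needs either Talagrand's convex-distance inequality or a self-bounding property of $\omega \mapsto K_p(\Psi(\omega))^2$ to beat that. Verifying the self-bounding property (or equivalently, that the ``certificate'' for the supremum being large involves only boundedly many coordinates weighted appropriately) is where the work lies, and it is exactly the place where the normalization $\delta = K_q(\Phi)^{-2q/p}|\Phi|^{-1+q/p}$ is used in an essential, non-negotiable way. Once that is in hand, the rest is bookkeeping, and I would relegate the deduction of Theorem \ref{tnew} from Theorem \ref{mir2} to a one-line corollary (integrate the tail, or take $u$ a large constant and note the selected set has the claimed size with positive probability via \eqref{CH}).
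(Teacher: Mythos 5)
Your proposal is essentially correct in outline, but it reaches the exponential tail by a genuinely different route than the paper. The paper does not run any concentration-around-the-mean argument for $K_p(\Psi(\omega))$ itself: it imports the tail estimate \eqref{rjirjgiigoti2} (Talagrand's Theorem 19.3.3 together with (19.124)) for the quantities $\|U_{\Psi(\omega)}\|_{C_1}^2$ and $\|U_{\Psi(\omega)}\|_{C_2}^2$, and then feeds these into the deterministic bridge $K_p(\Psi(\omega))\lesssim 1+\|U_{\Psi(\omega)}\|_{C_1}+\|U_{\Psi(\omega)}\|_{C_2}/\sqrt{\log N}$ (Corollary \ref{rjutgurtu}); the event $K_p\ge u$ then forces one of the $\|U\|_{C_i}^2$ to be $\gtrsim u^2$ (resp.\ $\gtrsim u^2\log N$), each of which has probability $\lesssim\exp(-Cu^2)$. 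Your route instead keeps only the \emph{expectation} (or median) bound from the chaining machinery and supplies the tail via a generic concentration inequality. That can be made to work, but with two corrections to how you phrase it. First, the workable version is the one you mention in passing: apply convex-distance/self-bounding concentration to $K_p(\Psi(\omega))^2=\sup_{\|g\|_{p'}\le1}\sum_i\delta_i|\langle\varphi_i,g\rangle|^2$, which is a supremum of linear functionals of the selectors with nonnegative coefficients bounded by $1$; this yields an exponential tail for $K_p^2$ beyond its median, i.e.\ $\P(K_p\ge u)\lesssim\exp(-cu^2)$, whereas a direct ``subgaussian around the mean'' statement for $K_p$ is not what these inequalities give in the large-deviation regime (they are Bernstein/Poissonian there), so the reduction through the square is not optional. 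Second, the selection probability $\delta=K_q(\Phi)^{-2q/p}|\Phi|^{-1+q/p}$ plays no role in the self-bounding/certifiable structure, which holds for any $\delta$; it (together with the endpoint hypothesis $|\Phi|^\kappa\le K_q(\Phi)^2$ when $p=q$) is used only in the mean/median bound, which is exactly the content of Theorem \ref{tnew} and its supporting apparatus (the norms $\|\cdot\|_{C_1},\|\cdot\|_{C_2}$, Lemma \ref{f_C1C2}, Corollary \ref{rjutgurtu}) — so that part of your plan is reusing the paper's hard step as a black box rather than replacing it. What your approach buys is independence from the specific tail refinement in \cite{Tal_book}: only the first-moment output of the chaining is needed, with the tail coming from soft concentration; what the paper's approach buys is that no additional concentration input is needed at all, since the book already provides the stronger tail for $\|U_{\Psi(\omega)}\|_C^2$.
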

	In our applications we only use (a) in Theorem \ref{mir2} with $q=2$. We remark that \cite{Bo} only proves the expected value
	\begin{equation}
		\label{4rioiti5i9yi}
		\E_\omega(K_p(\Psi(\omega))^p)\lesssim_p 1.\end{equation}
	This is weaker than the exponential estimate in (a), and proves insufficient for our application in the proof of Proposition \ref{p3}.
	\medskip

	We start by recalling the setup in \cite{Tal_book}. Let $(X,\|\cdot\|_X)$ be a real Banach space. Its dual $X^*$ is a  Banach space equipped with the canonical dual norm
	$$\|x^*\|_{X^*}=\sup_{\|x\|_X\le 1}|x^*(x)|.$$
	Alternatively, we may recover the norm on $X$ from its dual norm via the identity
	$$\|x\|_X=\sup_{\|x^*\|_{X^*}\le 1}|x^*(x)|.$$
	The closed unit ball of $(X,\|\cdot\|_X)$
	$$X_1=\{x\in X:\;\|x\|_{X}\le 1\}$$
	is a convex set with the property that for each $x\in X\setminus\{0\}$
	$$\{\lambda\in\R:\;\lambda x\subset X_1\}$$
	is a compact interval of positive length, centered at the origin. Conversely, given any $B\subset X$ that is convex and such that $\{\lambda\in\R:\;\lambda x\subset B\}$ is a compact, symmetric interval of positive length for each $x\in X\setminus \{0\}$, the Minkowski functional of $B$
	$$p_B(x)=\inf\{\alpha>0:\;x\in\alpha B\}$$
	defines a norm on $X$ whose closed unit ball is $B$.
	\smallskip
	
	We use this strategy to introduce a family of alternative norms on $X^*$ as follows. Fix some distinct  $x_1,\ldots,x_N\in X$. Then for each $C>0$, we consider the set 
	$$X_{1,C}^{*}=\{x^*\in X^*: \;\|x^*\|_{X^*}\le 1\text{ and }\sum_{n=1}^N|x^*(x_n)|^2\le C\}.$$
	This set can be seen to coincide with the closed unit ball of $X^*$ when equipped with the norm
	$$\|x^*\|_{X^{*}_C}=\inf\{\alpha>0:\;x^*\in \alpha X_{1,C}^*\}.$$This  is the dual norm of the norm on $X$ given by
	$$\|x\|_{X_C}=\sup\{|x^*(x)|:\;\|x^*\|_{X^*}\le 1\text{ and }\sum_{n=1}^N|x^*(x_n)|^2\le C\}.$$ 
	\smallskip
	
	We call $(X,\|\cdot \|_X)$ \textit{$ 2 $-convex} if there exists a constant $ \eta >0 $ such that 
	\begin{equation}\label{p_convex}
		\left\|\frac{x+y}{2}\right\|_X \leq 1- \eta \|x-y\|_X^2
	\end{equation}
	for any $ \|x\|_X,\|y\|_X \leq 1 $.
	
	In our application, we will consider $X=L^t([0,1]^d)$, for $t>2$. The 2-convexity of its dual space $X^*=L^{t'}([0,1]^d)$ was proved by Hanner \cite{Han} for $d=1$, but his proof can be easily extended to any probability measure space, in particular to $[0,1]^d$ for $d\ge 2$.

	For each $S\subset\{1,2,\ldots,N\}$ we define
	\begin{align*}
		\|U_{S}\|_C &=\sup_{x^* \in X_{1,C}^\ast} ( \sum_{n \in S} |x^{*}(x_n)|^2)^{1/2}\\&=\sup\{(\sum_{n\in S}|x^*(x_n)|^2)^{1/2}:\;\|x^*\|_{X^*}\le 1,\;\sum_{n=1}^N|x^*(x_n)|^2\le C\}.
	\end{align*}
	
	Let $(\delta_i)_{i=1}^N$ be $\{0,1\}$-valued i.i.d. random variables on the probability space $\Omega$ taking value $1$ with probability $\delta$. For $\omega \in \Omega$, we write
	\[
	\Psi(\omega) = \{ x_i : \delta_i(\omega) = 1 \}.
	\]

	Now we state the key result in  Talagrand's argument, as presented in \cite{Tal_book}. 
	\begin{te}\label{Tal_keythm}
		Consider a real Banach space $ X $ with a norm $ \|\cdot\|_X $ such that $ X^\ast $ is $ 2 $-convex with corresponding constant $ \eta $ in \eqref{p_convex}. Suppose that $ \|x_n\|_X \leq 1 $ for all $1 \leq  n \leq N$. Then, there exists a number $ K(\eta) $ that depends only on $ \eta $ with the following property. Consider a number $ C>0$ and define $ B = \max(C, K(\eta)\log N) $. Consider a number $ \delta>0 $ and assume that for some $ \epsilon >0 $,
		\[ \delta \leq \frac{1}{B\epsilon N^\epsilon } \leq 1. \]
		Given the random variables $ (\delta_i)_{1\leq i \leq N} $  defined above, we have
		\begin{equation}
			\label{rjirjgiigoti}
			\E_\omega(\|U_{\Psi(\omega)}\|_C^2) \lesssim \frac{K(\eta)}{\epsilon}. 
		\end{equation}
		Moreover, for $v \gtrsim 1$, there exists a constant $L$ independent of $\epsilon,v,C$,  such that
		\begin{equation}
			\label{rjirjgiigoti2}
			\P\left( \|U_{\Psi(\omega)}\|^2_C \gtrsim \frac{v K(\eta)}{\epsilon }  \right) \leq L \exp(-v/L).
		\end{equation}
	\end{te}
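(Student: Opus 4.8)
The object to control is the supremum of a selector (empirical) process:
$$\|U_{\Psi(\omega)}\|_C^2=\sup_{x^*\in X^*_{1,C}}\sum_{n=1}^N\delta_n(\omega)\,|x^*(x_n)|^2,$$
indexed by the convex body $X^*_{1,C}$. The plan is the standard two‑step scheme of Talagrand's argument: first bound the expectation \eqref{rjirjgiigoti} by a generic chaining estimate that converts the $2$‑convexity of $X^*$ into metric‑entropy control of the index set, and then upgrade the expectation bound to the exponential tail \eqref{rjirjgiigoti2} via a Bernstein/Talagrand concentration inequality for suprema of sums of independent, nonnegative, uniformly bounded random variables.

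For the expectation, identify each $x^*$ with the vector $(x^*(x_1),\dots,x^*(x_N))\in\R^N$ and let $T$ be the image of $X^*_{1,C}$. Since $\|x_n\|_X\le 1$, every $t\in T$ satisfies $\|t\|_\infty\le 1$, and the defining constraint of $X^*_{1,C}$ gives $\|t\|_2\le\sqrt{C}$. The crucial input is that $2$‑convexity of $X^*$ with constant $\eta$ (via the inequality \eqref{p_convex} and duality) forces the unit ball of $X^*$, hence $T$, to be ``round'' enough that its covering numbers in $\ell^2$ decay at a rate controlled by $\eta$; this feeds Talagrand's chaining machinery and yields a majorizing‑measure bound with a constant one calls $K(\eta)$, together with the unavoidable $\log N$ factor coming from the ambient dimension $N$ (this is exactly why $B$ is taken $\ge K(\eta)\log N$, so that the logarithmic loss is absorbed). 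One then splits each coordinate of $t$ into a ``large part'' ($|t_n|>\rho$, of which there are at most $C/\rho^2\le B$) and a ``small part'': the hypothesis $\delta\le(B\epsilon N^\epsilon)^{-1}$ makes the selectors activate essentially none of the large coordinates in expectation, so their contribution is $\lesssim 1$, while the small part is estimated by chaining with the entropy bound above, the factor $N^{-\epsilon}$ killing the per‑scale error terms. Balancing the three regimes produces $\E_\omega\|U_{\Psi(\omega)}\|_C^2\lesssim K(\eta)/\epsilon$.

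For the tail, fix $x^*\in X^*_{1,C}$: the summands $\delta_n|x^*(x_n)|^2$ are independent, lie in $[0,1]$ because $|x^*(x_n)|\le\|x^*\|_{X^*}\|x_n\|_X\le 1$, and have weak variance $\sigma^2:=\sup_{x^*}\sum_n\E(\delta_n)|x^*(x_n)|^4\le\delta\sup_{x^*}\sum_n|x^*(x_n)|^2\le\delta C\le 1$, so $\sigma^2\lesssim\E_\omega\|U_{\Psi(\omega)}\|_C^2$. Talagrand's concentration inequality for the supremum of such a process then gives, for $s>0$,
$$\P\Big(\|U_{\Psi(\omega)}\|_C^2\ge 2\,\E_\omega\|U_{\Psi(\omega)}\|_C^2+s\Big)\lesssim\exp\Big(-c\,\min\big(s^2/\E_\omega\|U_{\Psi(\omega)}\|_C^2,\;s\big)\Big),$$
and inserting the expectation bound and choosing $s$ of size $\sim vK(\eta)/\epsilon$ collapses the right‑hand side to $L\exp(-v/L)$ after adjusting absolute constants.

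The main obstacle is the expectation bound, and inside it the step that turns the abstract $2$‑convexity inequality \eqref{p_convex} into a quantitative entropy/chaining estimate for $T$ with the correct dependence on $\eta$: this is the genuinely Talagrand‑style part, and it is also where the interplay of the three scales — the few large coordinates tamed by the smallness of $\delta$, the $\ell^2$‑bulk of size $\le\sqrt{C}\le\sqrt{B}$, and the $\log N$ loss absorbed into $B$ — must be orchestrated precisely to manufacture the constant $K(\eta)$. (Since this is Theorem in \cite{Tal_book}, we do not reproduce the full argument here.)
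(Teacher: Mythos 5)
The paper does not actually prove this theorem: it is quoted from Talagrand's book, with \eqref{rjirjgiigoti} cited as Theorem 19.3.1 there and \eqref{rjirjgiigoti2} obtained by combining Theorem 19.3.3 with (19.124) of the same book. Your sketch of the expectation bound is a fair description of the shape of Talagrand's argument (the selector process over the image of $X_{1,C}^*$ in $\R^N$, the split into large and small coordinates, $2$-convexity feeding the chaining), but the step you yourself identify as the main obstacle---turning \eqref{p_convex} into quantitative control of the index set---is precisely what you do not carry out, and in the book it proceeds through majorizing-measure/functional estimates rather than a plain covering-number decay; so for \eqref{rjirjgiigoti} your proposal is a road map resting on the same citation, which matches the paper's treatment. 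Where you genuinely diverge is the tail: the paper gets \eqref{rjirjgiigoti2} from Talagrand's own deviation statement, while you propose to upgrade the expectation bound via Talagrand--Bousquet concentration for suprema of bounded empirical processes. That route is legitimate: the summands $\delta_n|x^*(x_n)|^2$ lie in $[0,1]$, the variance proxy is at most $\delta C\le \delta B\le 1$, and the non-centered part $\delta\sum_n|x^*(x_n)|^2\le 1$ is harmless, so a Bernstein-type bound at deviation $s\sim vK(\eta)/\epsilon$ yields $\exp(-cv)$ \emph{provided} $K(\eta)/\epsilon\gtrsim 1$; this is satisfied in every application in the paper (where $\epsilon$ is a small fixed constant), but it is a mild restriction not present in the cited formulation, which controls the tail directly inside the chaining argument. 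In short: same source and same mechanism for the expectation, a softer and more modular---but slightly less general---derivation for the tail.
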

	Inequality \eqref{rjirjgiigoti} is proved in Theorem 19.3.1 in \cite{Tal_book}. Inequality \eqref{rjirjgiigoti2} is a strengthening of \eqref{rjirjgiigoti}. It follows by combining  Theorem 19.3.3  and (19.124) in \cite{Tal_book}. 
	\medskip
	
	We first prove  Theorem \ref{tnew}.
	Let $p$, $q$ and $\Phi$ be as in Theorem \ref{tnew}. There is nothing to prove if $p=2$, so we may assume $p>2$.

	We fix some arbitrary  $p_1 >p$ and  apply this theorem to  $X = L^{p_1}([0,1]^d)$ and $x_n=\varphi_n$. The value of $p_1$ is irrelevant, we may for example work with $p_1=3p/2$. While implicit constants in various computations will depend on $p_1$, by making the choice of $p_1$ explicit, they will in fact depend on $p$.
	\begin{co}\label{Cor_prob_UJ}
		Let
		\[
		\delta = K_q(\Phi)^{-\frac{2q}{p}} |\Phi|^{-1+\frac{q}{p}} \qquad  \text{and}  \qquad C_2 =\frac{K(\eta)}{\delta}\log N.    
		\]
		If $p>q$ we let
		$$C_1 = \frac{1}{\delta} N^{-\frac{1}{2}(1-\frac{q}{p})}.$$
		If $q=p$, we assume that there exist $\kappa >0$ such that
		\[
		N^\kappa  \le K_q(\Phi)^2,
		\]
		and we let
		\[
		C_1= \frac{1}{\delta} N^{-\kappa/2}.
		\]
		Then we have the estimates
		\begin{equation*}\label{EU_JC_1}
			\E(\norm{U_{\Psi(\omega)}}_{C_1}^2 ) \lesssim_{p_1,p,q}  \begin{cases}
				1 & \text{if} \ q>p
				\\
				\kappa^{-1} & \text{if} \ q=p
			\end{cases}
		\end{equation*}
		and
		\begin{equation*}\label{EU_JC_2}
			\E(\norm{U_{\Psi(\omega)}}_{C_2}^2 ) \lesssim_{p_1} \log N.
		\end{equation*}
	\end{co}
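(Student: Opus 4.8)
The plan is to invoke Theorem \ref{Tal_keythm} twice, once with $C=C_1$ and once with $C=C_2$, applied to the Banach space $X=L^{p_1}([0,1]^d)$ (with $p_1=3p/2$ fixed) and the vectors $x_n=\varphi_n$. The hypotheses are in place: $\|\varphi_n\|_{L^{p_1}}\le 1$ since $\|\varphi_n\|_\infty\le 1$ on a probability space, and $X^*=L^{p_1'}([0,1]^d)$ with $p_1'<2$ is $2$-convex with a constant $\eta=\eta(p_1)$ by Hanner's inequality \cite{Han}. Two standing observations will be used repeatedly: $\delta=K_q(\Phi)^{-2q/p}N^{-1+q/p}\le 1$ always (because $K_q(\Phi)\ge 1$ and $N^{q/p-1}\le 1$), and in fact $\delta\to 0$ as $N\to\infty$, since $\delta\le N^{q/p-1}$ when $p>q$ while $\delta=K_q(\Phi)^{-2}\le N^{-\kappa}$ when $p=q$. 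All the bounds will be established for $N$ large; for $N$ in a bounded range the conclusions are trivial, as $\|U_{\Psi(\omega)}\|_C^2\le C$ by definition and $C_1,C_2$ are then bounded.

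For the $C_2$ estimate, first note that $B=\max(C_2,K(\eta)\log N)=C_2$, because $\delta\le 1$ forces $C_2=\tfrac{K(\eta)}{\delta}\log N\ge K(\eta)\log N$. I would take $\epsilon=c/\log N$ for a small constant $c=c(\eta)>0$ chosen so that $K(\eta)\,c\,e^{c}\le 1$. Then $\delta B\epsilon N^\epsilon=K(\eta)(\log N)\epsilon N^\epsilon=K(\eta)c e^{c}\le 1$, which is the inequality $\delta\le\frac{1}{B\epsilon N^\epsilon}$, while $B\epsilon N^\epsilon=\tfrac{K(\eta)c e^{c}}{\delta}\to\infty$ since $\delta\to 0$, so $\frac{1}{B\epsilon N^\epsilon}\le 1$ for $N$ large. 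Now \eqref{rjirjgiigoti} yields $\E(\|U_{\Psi(\omega)}\|_{C_2}^2)\lesssim\tfrac{K(\eta)}{\epsilon}\lesssim_{p_1}\log N$.

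For the $C_1$ estimate, the first task is to check that $B=\max(C_1,K(\eta)\log N)=C_1$ once $N$ is large. Writing $\tfrac1\delta=K_q(\Phi)^{2q/p}N^{1-q/p}$ and using $K_q(\Phi)\ge 1$, one gets $C_1\ge N^{\frac12(1-q/p)}$ when $p>q$, and $C_1\ge N^{\kappa/2}$ when $p=q$ (here the standing assumption $N^\kappa\le K_q(\Phi)^2$ enters), so in either case $C_1\ge K(\eta)\log N$ for $N$ large. Then I would take $\epsilon$ to be the constant $\tfrac14(1-q/p)$ when $p>q$, and $\tfrac{\kappa}{4}$ when $p=q$. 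Using $\delta C_1=N^{-\frac12(1-q/p)}$ (resp. $\delta C_1=N^{-\kappa/2}$), one verifies $\delta B\epsilon N^\epsilon=\epsilon N^{-\frac14(1-q/p)}\to 0$ (resp. $\epsilon N^{-\kappa/4}\to 0$), while $B\epsilon N^\epsilon\ge\epsilon N^{\frac12(1-q/p)}\to\infty$ (resp. $\ge\epsilon N^{\kappa/2}\to\infty$). Thus the admissibility condition of Theorem \ref{Tal_keythm} holds for $N$ large, and \eqref{rjirjgiigoti} gives $\E(\|U_{\Psi(\omega)}\|_{C_1}^2)\lesssim\tfrac{K(\eta)}{\epsilon}$, which is $\lesssim_{p,q}1$ in the case $p>q$ and $\lesssim_q\kappa^{-1}$ in the case $p=q$.

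Nothing in this argument is deep: the substance lies entirely in Theorem \ref{Tal_keythm}, quoted from \cite{Tal_book}. The one point demanding attention is the bookkeeping of the admissibility chain $\delta\le\frac{1}{B\epsilon N^\epsilon}\le 1$: the left inequality caps $\epsilon$ from above, the right inequality caps it from below, and one must verify that a nonempty window exists for the explicit $\epsilon$ chosen in each regime — this is precisely where $\delta\to 0$ gets used. A secondary subtlety, specific to the $C_1$ bound, is confirming that $B=C_1$ rather than $K(\eta)\log N$; if $B$ were $K(\eta)\log N$ one would be forced to take $\epsilon\sim 1/\log N$ and lose a factor of $\log N$, which is why the estimate is only asserted for $N$ large and the small-$N$ case is disposed of by the trivial bound $\|U_{\Psi(\omega)}\|_C^2\le C$.
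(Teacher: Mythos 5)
Your proposal is correct and follows essentially the same route as the paper: both apply Theorem \ref{Tal_keythm} with $X=L^{p_1}([0,1]^d)$, verify $B_i=C_i$ for large $N$, and choose $\epsilon_1$ a constant depending on $p,q$ (or $\kappa$) and $\epsilon_2\sim 1/\log N$, then read off the expectation bounds from \eqref{rjirjgiigoti}. Your only differences are cosmetic (slightly different explicit constants for $\epsilon$, and spelling out the side conditions $\frac{1}{B\epsilon N^\epsilon}\le 1$ and the small-$N$ case), so nothing further is needed.
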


	\begin{proof}
		Note that $\eta$ and $K(\eta)$ are constants that only depend on $p_1$. 
		For sufficiently large $N$ we have
		\[
		B_1 = \max( C_1, K(\eta) \log N) = C_1 \qquad \text{and} \qquad B_2 = \max (C_2, K(\eta) \log N )= C_2.
		\]
		Let $\epsilon_1  = 1/2 (1-q/p)$ if $q >p$ and $\epsilon_1 =\kappa/2 $ if $q=p$ and let $\epsilon_2 = (100K(\eta)\log N)^{-1}$. Then
		\[
		\delta  \leq \frac{1}{B_1 \epsilon_1  N^{\epsilon_1} } \qquad \text{and} \qquad \delta   \leq \frac{1}{B_2 \epsilon_2  N^{\epsilon_2} }.
		\]
		By \eqref{rjirjgiigoti} in Theorem \ref{Tal_keythm}, we obtain the desired estimates.		
	\end{proof}
	\bigskip
	
	Following the recipe presented at the beginning of this section, we introduce the family of norms on $X^{*}=L^{p_1'}([0,1])$, whose closed unit balls are 
	$$X_{1,C}^*=\{g\in L^{p_1'}([0,1]^d):\;\|g\|_{p_1'}\le 1\text{ and }\sum_{n=1}^N|\int_{[0,1]^d}\varphi_n(x){g(x)}dx|^2\le C\},$$
	and the corresponding family of norms on $X=L^{p_1}([0,1]^d)$  
	\[
	\|f\|_C =  \sup_{g \in X_{1,C}^\ast } |\int_{[0,1]^d} f(x) {g(x)}dx|.
	\]
	
	Let us see the connection between these norms and the quantities $\|U_S\|_{C}$ introduced earlier. 
	\begin{lem}
		\label{lcon}	
		Assume $f= \sum_{n \in S}a_n \varphi_n $ such that $\sum_{n \in S} |a_n|^2 \leq 1$. Then, we have $\|f\|_C\le \|U_S\|_C$.
	\end{lem}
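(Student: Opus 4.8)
The plan is to unwind both definitions and close the gap with a single application of the Cauchy--Schwarz inequality in $\ell^2(S)$. First I would recall that, under the canonical isometric identification $X^*=L^{p_1'}([0,1]^d)$, every functional $x^*\in X^*$ acts by integration against its density $g\in L^{p_1'}$, so that $x^*(\varphi_n)=\int_{[0,1]^d}\varphi_n(x)g(x)\,dx$ for each $n$. Consequently the constraint set $X_{1,C}^*$ and the quantity $\|U_S\|_C$ are both governed by the scalars $\int_{[0,1]^d}\varphi_n g$, and these are precisely the scalars that surface when one expands the pairing $\int fg$ using $f=\sum_{n\in S}a_n\varphi_n$.

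Then I would fix an arbitrary $g\in X_{1,C}^*$ and write, by linearity,
$$\int_{[0,1]^d}f(x)g(x)\,dx=\sum_{n\in S}a_n\int_{[0,1]^d}\varphi_n(x)g(x)\,dx,$$
and estimate by Cauchy--Schwarz in $\ell^2(S)$:
$$\Bigl|\int_{[0,1]^d}f(x)g(x)\,dx\Bigr|\le\Bigl(\sum_{n\in S}|a_n|^2\Bigr)^{1/2}\Bigl(\sum_{n\in S}\Bigl|\int_{[0,1]^d}\varphi_n g\Bigr|^2\Bigr)^{1/2}\le\Bigl(\sum_{n\in S}\Bigl|\int_{[0,1]^d}\varphi_n g\Bigr|^2\Bigr)^{1/2},$$
where the last step uses the hypothesis $\sum_{n\in S}|a_n|^2\le 1$. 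Since $g\in X_{1,C}^*$, the final factor is at most $\|U_S\|_C$ by the very definition of $\|U_S\|_C$ as a supremum over $X_{1,C}^*$. Taking the supremum of the left-hand side over all $g\in X_{1,C}^*$ then yields $\|f\|_C\le\|U_S\|_C$.

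I do not expect a genuine obstacle here; the only point requiring care is the bookkeeping observation that the supremum defining $\|f\|_C$ ranges over exactly the same set $X_{1,C}^*$ that enters the definition of $\|U_S\|_C$, which is immediate from the definitions recorded just above the lemma. Since the $\varphi_n$ are real-valued in this section, no complex-conjugation subtleties intervene in identifying $x^*(\varphi_n)$ with $\int\varphi_n g$.
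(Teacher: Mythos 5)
Your proof is correct and follows the same route as the paper: expand $\int fg$ by linearity, apply Cauchy--Schwarz in $\ell^2(S)$, invoke $\sum_{n\in S}|a_n|^2\le 1$, and take the supremum over $g\in X_{1,C}^*$, which is precisely the supremum defining $\|U_S\|_C$. No gaps.
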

	\begin{proof}
		\begin{equation*}
			\begin{split}
				\norm{f}_{C} &= \sup_{g \in X_{1,C}^\ast} |\int f(x) {g(x)}dx|  =\sup_{g \in X_{1,C}^\ast} |\sum_{n \in S} a_n {\int \varphi_n(x) {g(x)} dx }|\\
				&\leq \left( \sum_{n \in S} |a_n|^2 \right)^{1/2}\left( \sup_{g \in X_{1,C}^\ast} \sum_{n \in S} \left|\int \varphi_n(x) {g(x)} dx \right|^2 \right)^{1/2}\\& \le \norm{U_S}_C.
			\end{split}
		\end{equation*}
	\end{proof}

	The next lemma is similar to Lemma 19.3.10 in \cite{Tal_book}. It is the proof of this lemma that justifies our choice of $p_1>p$.
	\begin{lem}\label{f_C1C2}
		We consider the same Banach space $X$ and constants $C_1 $ and $C_2$ as in Corollary \ref{Cor_prob_UJ}. If  $\norm{f}_{C_1} \leq 1$ and $\norm{f}_{C_2} \leq \sqrt{\log N}$, then $\norm{f}_{L^p([0,1]^d)} \lesssim 1$. The implicit constant is independent of $N$.
	\end{lem}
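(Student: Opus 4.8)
Because $p<p_1$ and the measure is a probability measure, every $g\in L^{p'}([0,1]^d)$ with $\|g\|_{p'}\le1$ also satisfies $\|g\|_{p_1'}\le1$, so by duality $\|f\|_{L^p([0,1]^d)}=\sup\{|\int fg|:\ \|g\|_{p'}\le1\}$ and it suffices to bound $|\int fg|$ for such $g$. The plan is to split $g$ according to the size of $|g|$ so that the ``small'' part lands in $X^*_{1,C_1}$ and the ``large'' part lands in a negligible dilate of $X^*_{1,C_2}$. I would first record two inputs. For a block $h=g\mathbf 1_{\lambda<|g|\le2\lambda}$, writing $\sigma=\|h\|_{p'}^{p'}$ one has $\|h\|_{p_1'}\lesssim\lambda^{1-p'/p_1'}\sigma^{1/p_1'}$, and, for the orthonormal system $\{\varphi_n\}$, Bessel together with $|\int\varphi_n h|\le\|h\|_1$ gives $\sum_n|\int\varphi_n h|^2\le\min\{\|h\|_2^2,\ N\|h\|_1^2\}\lesssim\min\{\lambda^{2-p'}\sigma,\ N\lambda^{2-2p'}\sigma^2\}$. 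Secondly, the hypotheses force $C_1\gtrsim N^{c}$ and $C_2/C_1\gtrsim N^{c}$ for some $c=c(p,q,\kappa)>0$ (if $p>q$ because $C_1\ge N^{\frac12(1-q/p)}$; if $p=q$ because $N^\kappa\le K_q(\Phi)^2$ gives $C_1\ge N^{\kappa/2}$); these are the only facts about the magnitudes of $C_1,C_2$ that I would use.

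Set $M:=C_1^{1/(2-p')}$, a positive power of $N$, and write $g=g^\flat+g^\sharp$ with $g^\flat=g\mathbf 1_{|g|\le M}$. Since $\|g^\flat\|_\infty\le M$ and $p'\le2$, Bessel yields $\sum_n|\int\varphi_n g^\flat|^2\le\|g^\flat\|_2^2\le M^{2-p'}\|g^\flat\|_{p'}^{p'}\le C_1$; as also $\|g^\flat\|_{p_1'}\le1$, this places $g^\flat\in X^*_{1,C_1}$, so $|\int fg^\flat|\le\|f\|_{C_1}\le1$. For the tail, decompose $g^\sharp=\sum_{j\ge1}g_j$ on the scales $2^{j-1}M<|g|\le2^jM$, with $\sigma_j:=\|g_j\|_{p'}^{p'}$ and $\sum_j\sigma_j\le1$, and dilate each block by $\mu_j:=\max\{\|g_j\|_{p_1'},\ (C_2^{-1}\sum_n|\int\varphi_n g_j|^2)^{1/2}\}$ so that $\mu_j^{-1}g_j\in X^*_{1,C_2}$; then $|\int fg_j|\le\mu_j\sqrt{\log N}$. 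The contribution of the first term in the max is controlled by $\sum_j(2^jM)^{1-p'/p_1'}\sigma_j^{1/p_1'}\lesssim M^{1-p'/p_1'}$, a negative power of $N$; here the summability in $j$ is exactly where $p_1>p$ enters, since it is equivalent to the exponent $(1-p'/p_1')p_1=p'-p_1(p'-1)$ being negative, and one uses $\sum_j\sigma_j\le1$ through H\"older. The contribution of the $\sum_n|\int\varphi_n g_j|^2$-term is handled by using, on each scale, the better of the two bounds above — the Bessel bound below a crossover scale and the $\|\cdot\|_1$-bound above it — both of which produce geometric series in $2^j$; summing these and invoking $C_1\gtrsim N^c$, $C_2/C_1\gtrsim N^c$ again bounds this piece by a negative power of $N$. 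Altogether $|\int fg|\le1+\sqrt{\log N}\sum_j\mu_j\lesssim1$.

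I expect the second half of Step 2 to be the main obstacle: the crude per-block routing of $g_j$ into $X^*_{1,C_2}$ gives a dilation factor that is only barely summable, and making the estimate close requires exploiting the precise numerology relating $2-p'$, $p'/p$, $C_1$ and $C_2$ (in particular the identity $p+p'=pp'$, the strict inequality $2/p_1'-1>0$, and $p_1>p$). This balancing act is precisely the content of Lemma 19.3.10 in \cite{Tal_book}, whose argument I would follow in place of the crude bound sketched above; it is also the reason the whole construction is engineered around the auxiliary exponent $p_1>p$.
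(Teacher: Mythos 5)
Your treatment of the bounded part $g^\flat$ is fine (Bessel plus $\|g^\flat\|_{p_1'}\le\|g^\flat\|_{p'}\le 1$ does place it in $X^*_{1,C_1}$), but the second half, which you yourself flag as the main obstacle, is a genuine gap, and the fix you propose — "follow Lemma 19.3.10 in \cite{Tal_book}" — does not close it. The reason is quantitative: your routing of the blocks $g_j$ into dilates of $X^*_{1,C_2}$ uses only orthonormality and uniform boundedness, i.e. $\sum_n|\int\varphi_n g_j|^2\le\min\{\|g_j\|_2^2,\,N\|g_j\|_1^2\}$. At the crossover scale $2^jM\sim (N\sigma_j)^{1/p'}$ with $\sigma_j\sim 1$ this yields a contribution of order $\sqrt{\log N}\,\bigl(N^{1-2/p}/C_2\bigr)^{1/2}$, so the argument needs $C_2\gtrsim N^{1-2/p}\log N$. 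But here $C_2\sim K_q(\Phi)^{2q/p}N^{1-q/p}\log N$, so this requirement is equivalent to $K_q(\Phi)\gtrsim N^{\frac12-\frac1q}$, i.e. to $K_q(\Phi)$ being essentially maximal. That holds in Talagrand's setting $q=2$ (where $K_2(\Phi)=1$ and $C_2\sim N^{1-2/p}\log N$), which is why Lemma 19.3.10 in \cite{Tal_book} goes through with exactly this "barely summable" numerology; but for $q>2$ with non-maximal $K_q(\Phi)$ — precisely the situation in this paper's applications, e.g. $K_q(\Phi_{R,\Mc})\sim N^{\frac12-\frac{d}{mq}}$ with $d>m$ — your bound loses a positive power of $N$. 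In short, your argument never uses $K_q(\Phi)$ beyond the crude facts $C_1\gtrsim N^c$, $C_2/C_1\gtrsim N^c$, and the lemma is not provable from those facts alone; the cited Talagrand lemma is only the $q=2$ prototype of the statement being proved here.

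For comparison, the paper does not dualize in $g$ at all: it splits the layer-cake integral of $|f|^p$ at heights $1$ and $D$ with $D^{p-2}=C_1$, uses Lemma \ref{split_f} to write $f=u_1+u_2$ (resp. $f=v_1+v_2$) with the second piece an orthonormal sum $\sum\beta_n\varphi_n$ having $\|\beta\|_{\ell^2}^2\le C_1^{-1}$ (resp. $\le C_2^{-1}\log N$), and in the decisive step bounds $\|v_2\|_{L^p}$ by interpolating $\|v_2\|_{L^q}\le K_q(\Phi)\|\beta\|_{\ell^2}$ against $\|v_2\|_{L^\infty}\le N^{1/2}\|\beta\|_{\ell^2}$; this is exactly calibrated to $C_2\sim\delta^{-1}\log N$ and is where the definition of $K_q(\Phi)$ enters irreplaceably. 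If you want to salvage your dual route, you would have to import the same information on the dual side, e.g. via $\sum_n|\int\varphi_n g_j|^2\le K_q(\Phi)^2\|g_j\|_{q'}^2$, and then redo the block summation with this third bound in the minimum — at which point you are reproving the paper's key step rather than quoting Talagrand. As written, the proposal does not establish the lemma in the stated generality $2\le q\le p$.
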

	In the proof of Lemma \ref{f_C1C2}, we will use the following lemma which is a particular case of Lemma 19.3.11 in \cite{Tal_book}. 
	
	\begin{lem}\label{split_f}
		If   $f \in L^{p_1}([0,1]^d)$ and $\norm{f}_C \leq 1$, then $f \in \mathcal{C}:= \mathrm{conv}(\mathcal{C}_1 \cup \mathcal{C}_{2})$ where $\mathcal{C}_1 = \{ g : \norm{g}_{L^{p_1}([0,1]^d)} \leq 1\}$ and $\mathcal{C}_{2} = \{ \sum_{1 \leq n \leq N} \beta_n \varphi_n : \sum_{1 \leq n \leq N} |\beta_n|^2 \leq C^{-1} \} $.
	\end{lem}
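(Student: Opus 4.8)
The plan is to prove the lemma by contraposition. Identifying $X^{\ast}=L^{p_1'}([0,1]^d)$ via the pairing $\langle f,g\rangle=\int_{[0,1]^d}fg$, recall that by construction $\|f\|_C=\sup\{|\langle f,g\rangle|:\ g\in X_{1,C}^{\ast}\}$. It therefore suffices to show that if $f\in L^{p_1}([0,1]^d)$ does \emph{not} lie in $\mathcal C=\mathrm{conv}(\mathcal C_1\cup\mathcal C_2)$, then there is some $g\in X_{1,C}^{\ast}$ with $\langle f,g\rangle>1$, for then $\|f\|_C>1$. Since $\mathcal C$ is convex, such a $g$ will be produced by the Hahn--Banach separation theorem, provided $\mathcal C$ is norm-closed in $L^{p_1}$. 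So the two ingredients are: closedness of $\mathcal C$, and then a separation-plus-unwinding argument.

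First I would verify that $\mathcal C$ is closed. Since $\mathcal C_1$ and $\mathcal C_2$ are both convex, every point of $\mathcal C$ has the form $(1-t)u+tv$ with $t\in[0,1]$, $u\in\mathcal C_1$, $v\in\mathcal C_2$. Here $\mathcal C_1$ is the closed unit ball of $L^{p_1}$, while $\mathcal C_2$ is the continuous image of the closed Euclidean ball of radius $C^{-1/2}$ under $\beta\mapsto\sum_{n}\beta_n\varphi_n$, hence a compact subset of the finite-dimensional space $\mathrm{span}\{\varphi_1,\dots,\varphi_N\}$. Given a norm-convergent sequence $(1-t_k)u_k+t_kv_k\to f$, I would pass to a subsequence along which $t_k\to t\in[0,1]$ and $v_k\to v\in\mathcal C_2$. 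If $t<1$, then $u_k\to(f-tv)/(1-t)=:u$ in norm, so $\|u\|_{p_1}\le 1$ and $f=(1-t)u+tv\in\mathcal C$; if $t=1$, then $\|(1-t_k)u_k\|_{p_1}\le 1-t_k\to 0$ forces $f=v\in\mathcal C_2\subset\mathcal C$. Thus $\mathcal C$ is sequentially closed, hence closed.

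Now suppose $f\notin\mathcal C$. Strict Hahn--Banach separation of the point $f$ from the nonempty closed convex set $\mathcal C$ (we have $0\in\mathcal C$) yields $g\in L^{p_1'}$ and $\gamma\in\R$ with $\langle f,g\rangle>\gamma>\langle h,g\rangle$ for every $h\in\mathcal C$. Since $0\in\mathcal C$ we get $\gamma>0$, and since $\mathcal C$ is symmetric (both $\mathcal C_1$ and $\mathcal C_2$ are) we get $|\langle h,g\rangle|\le\gamma$ for all $h\in\mathcal C$; rescaling $g$ by $\gamma^{-1}$, I may assume $\langle f,g\rangle>1$ and $\sup_{h\in\mathcal C}|\langle h,g\rangle|\le 1$. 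Letting $h$ range over $\mathcal C_1$ gives $\|g\|_{p_1'}=\sup_{\|u\|_{p_1}\le 1}\langle u,g\rangle\le 1$, and letting $h$ range over $\mathcal C_2$ gives, by the equality case of the Cauchy--Schwarz inequality, $C^{-1/2}\big(\sum_{n=1}^N|\langle\varphi_n,g\rangle|^2\big)^{1/2}=\sup\{\sum_{n}\beta_n\langle\varphi_n,g\rangle:\ \sum_n|\beta_n|^2\le C^{-1}\}\le 1$, i.e. $\sum_{n=1}^N|\langle\varphi_n,g\rangle|^2\le C$. Hence $g\in X_{1,C}^{\ast}$, so $\|f\|_C\ge\langle f,g\rangle>1$, which is exactly the contrapositive of the assertion.

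The one delicate step is the closedness of $\mathcal C$: without it the separation argument only shows $f\in\overline{\mathcal C}$. The reason it works is that precisely one of the two generating sets, $\mathcal C_2$, lies in a finite-dimensional subspace and is therefore compact, which is enough to keep $\mathrm{conv}(\mathcal C_1\cup\mathcal C_2)$ closed even though $\mathcal C_1$ is only weakly compact; this elementary sequential argument avoids any appeal to reflexivity of $L^{p_1}$ or to the bipolar theorem. Everything else is just the standard dictionary relating the norm $\|\cdot\|_C$, its defining set $X_{1,C}^{\ast}$, and the sets $\mathcal C_1,\mathcal C_2$.
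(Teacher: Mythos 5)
Your proof is correct. Note that the paper does not prove this lemma at all: it is quoted as a special case of Lemma 19.3.11 in Talagrand's book, so there is no in-paper argument to compare against. Your self-contained proof is the natural duality argument (and is in the same spirit as Talagrand's): since $\|f\|_C=\sup_{g\in X_{1,C}^{\ast}}|\langle f,g\rangle|$ and $X_{1,C}^{\ast}$ is exactly the set of $g$ with $\|g\|_{p_1'}\le 1$ and $\sum_n|\langle\varphi_n,g\rangle|^2\le C$, the statement is a polarity claim, and you prove it by Hahn--Banach separation. The step that genuinely needs care, and which you handle correctly, is that $\mathrm{conv}(\mathcal C_1\cup\mathcal C_2)$ is norm-closed (otherwise separation only places $f$ in the closure): your sequential argument, using that $\mathcal C_2$ is compact because it sits in the finite-dimensional span of $\varphi_1,\dots,\varphi_N$, together with the dichotomy $t<1$ versus $t=1$, is sound, and the subsequent unwinding of the separating functional (symmetry of $\mathcal C$, rescaling by $\gamma$, testing against $\mathcal C_1$ to get $\|g\|_{p_1'}\le 1$ and against $\mathcal C_2$ with the equality case of Cauchy--Schwarz to get $\sum_n|\langle\varphi_n,g\rangle|^2\le C$) is exactly right. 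One could instead quote the bipolar theorem, but your elementary route has the advantage of making the closedness issue explicit, and it is consistent with the paper's real-scalar framework in this section.
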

	
	Let us now prove Lemma \ref{f_C1C2}.
	\begin{proof}[Proof of Lemma \ref{f_C1C2}]
		Note that 
		\[ \norm{f}_{L^p([0,1]^d)}^p= p \int_0^\infty t^{p-1} \mu(\{|f| \geq t \} )dt  \]
		where $\mu$ is the Lebesgue measure restricted to $[0,1]^d$. We choose a constant $D$ such that $D^{p-2} =C_1 $ (this is possible since $p\ge q>2$) and split $\norm{f}_{L^p([0,1]^d)}^p$ according to the range of $t$ as follows
		\[
		\norm{f}_{L^p([0,1]^d)}^p \approx \int_0^1 t^{p-1} \mu(\{|f| \geq t \} )dt + \int_1^D t^{p-1} \mu(\{|f| \geq t \} )dt + \int_D^\infty t^{p-1} \mu(\{|f| \geq t \} )dt.
		\]
		It is easy to see that 
		\[
		\int_0^1 t^{p-1} \mu(\{|f| \geq t \} )dt \lesssim 1.
		\]
		We turn to the second term. Since $\norm{f}_{C_1} \leq 1$, Lemma \ref{split_f} implies that $f =u_1 +u_2$ where $\norm{u_1}_{L^{p_1}([0,1]^d)} \leq 1$ and $u_2 = \sum_{1 \leq n \leq N} \beta_{1,n}\varphi_n$ such that $\sum_{1 \leq n \leq N} |\beta_{1,n}|^2 \leq C_1^{-1}$. In particular, we have $\norm{u_2}^2_{L^{2}([0,1]^d)} \leq C_1^{-1}$. Thus
		\begin{equation*}
			\begin{split}
				\int_1^D t^{p-1}\mu(\{ |f| \geq t \} ) dt &\le \int_1^D t^{p-1}\mu(\{ |u_1| \geq t/2 \} ) dt +\int_1^D t^{p-1}\mu(\{ |u_2| \geq t/2 \} ) dt\\
				&\leq  \int_1^D t^{p-p_1-1} dt + C_1^{-1} \int_1^D  t^{p-3}dt \lesssim 1.
			\end{split}
		\end{equation*}
		Lastly, since $\norm{f}_{C_2} \leq \sqrt{\log N}$, Lemma \ref{split_f} imply that $f = v_1 +v_2$ such that $\norm{v_1}_{L^{p_1}([0,1]^d)} \leq \sqrt{\log N}$ and  $v_2 = \sum_{1 \leq n \leq N} \beta_{2,n}\varphi_n $ such that $\sum_{1 \leq n \leq N} |\beta_{2, n}|^2 \leq C_2^{-1}\log N  $. Similarly, 
		\begin{equation*}
			\int_D^\infty t^{p-1}\mu (\{ |f| \geq t \} ) dt \le \int_D^\infty t^{p-1} \mu(\{ |v_1| \geq t/2 \} ) dt +\int_D^\infty t^{p-1}\mu(\{ |v_2| \geq t/2 \} ) dt.
		\end{equation*}

		Note that $C_1 \gtrsim N^{c}$,  where the constant $c$ depends on $p$, $q$ and $\kappa$. If $p=q$, then we may take $c= \kappa/2$. Therefore, we get
		\[
		\int_D^\infty t^{p-1}\mu(\{ |v_1| \geq t/2 \} ) dt  \lesssim (\log N)^{p_1/2} \int_D^\infty t^{p-p_1-1} dt \lesssim 1.
		\]
		We find
		\[
		\norm{v_2}_{L^q([0,1]^d)} \lesssim K_q(\Phi)\left(\sum_n |\beta_{2,n}|^2 \right)^{1/2} \lesssim K_q(\Phi) (\log N )^{1/2} C_2^{-1/2}.
		\]
		Also, we have that
		\[
		\norm{v_2}_{L^\infty([0,1]^d)} \leq N^{1/2} \left(\sum_{n} |\beta_{2,n}|^2\right)^{1/2}\lesssim N^{1/2}(\log N )^{1/2} C_2^{-1/2}.
		\]
		Since
		\[
		C_2^{-1}\log N \lesssim K_q(\Phi)^{-\frac{2q}{p}} N^{1- \frac{q}{p}},
		\]
		we interpolate the inequalities above to get
		$$\norm{v_2}_{L^p([0,1]^d)}\le \norm{v_2}_{L^q([0,1]^d)}^{q/p}\norm{v_2}_{L^\infty([0,1]^d)}^{1-q/p}\lesssim 1.$$
		Finally, 
		\[
		\int_D^\infty t^{p-1}\mu(\{ |v_2| \geq t/2 \} ) dt  \lesssim \norm{v_2}_{L^p([0,1]^d)}^p \lesssim 1.
		\]
		We conclude that $\norm{f}_p \lesssim 1$.
		
	\end{proof}
	\begin{co}
		\label{rjutgurtu}	
		For each set $S\subset \{1,\ldots,N\}$ we have
		$$K_p(S)\lesssim 1+\|U_S\|_{C_1}+\frac{\|U_S\|_{C_2}}{\sqrt{\log N}}.$$
		The implicit constant is independent of $N$.
	\end{co}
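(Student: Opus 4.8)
The plan is to combine Lemma \ref{f_C1C2} with the comparison in Lemma \ref{lcon} and then rescale. First I would take an arbitrary $f=\sum_{n\in S}a_n\varphi_n$ with $\sum_{n\in S}|a_n|^2\le 1$; it suffices to bound $\|f\|_{L^p([0,1]^d)}$ by the right-hand side. By Lemma \ref{lcon} we know $\|f\|_{C_1}\le \|U_S\|_{C_1}$ and $\|f\|_{C_2}\le \|U_S\|_{C_2}$. The natural move is then to normalize: set $\lambda = 1+\|U_S\|_{C_1}+\frac{\|U_S\|_{C_2}}{\sqrt{\log N}}$ and consider $g = f/\lambda$. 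By homogeneity of the norms $\|\cdot\|_{C_1}$ and $\|\cdot\|_{C_2}$, we get $\|g\|_{C_1}=\|f\|_{C_1}/\lambda\le \|U_S\|_{C_1}/\lambda\le 1$ and $\|g\|_{C_2}=\|f\|_{C_2}/\lambda\le \|U_S\|_{C_2}/\lambda\le \sqrt{\log N}$, where I used that $\lambda\ge 1$ and $\lambda\ge \|U_S\|_{C_2}/\sqrt{\log N}$, hence $\|U_S\|_{C_2}/\lambda\le\sqrt{\log N}$. Now Lemma \ref{f_C1C2} applies to $g$ and yields $\|g\|_{L^p([0,1]^d)}\lesssim 1$, i.e. $\|f\|_{L^p([0,1]^d)}\lesssim \lambda$ with an implicit constant independent of $N$.

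Since this holds for every such $f$, taking the supremum over all choices of coefficients with $\sum_{n\in S}|a_n|^2\le 1$ gives exactly $K_p(S)\lesssim 1+\|U_S\|_{C_1}+\frac{\|U_S\|_{C_2}}{\sqrt{\log N}}$ by the definition of $K_p$ (recalling that in this section $K_p$ is computed with real coefficients, which is harmless up to constants).

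One small technical point I would check: Lemma \ref{f_C1C2} is stated for $f\in L^{p_1}([0,1]^d)$, so I should note that $g$, being a finite linear combination of the bounded functions $\varphi_n$, indeed lies in $L^{p_1}$, so the hypothesis is met. Another degenerate case worth a word: if $\|U_S\|_{C_1}=\|U_S\|_{C_2}=0$ then $\lambda=1$ and the argument still goes through verbatim. I do not anticipate a genuine obstacle here — the content is entirely front-loaded into Lemma \ref{f_C1C2}, and the corollary is just the homogenization/dualization of that statement; the only thing to be careful about is to track that the normalizing factor $\lambda$ dominates both $\|f\|_{C_1}$ and $\|f\|_{C_2}/\sqrt{\log N}$ simultaneously, which is built into its definition.
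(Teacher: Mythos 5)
Your proposal is correct and is essentially identical to the paper's own proof: the paper also takes $f=\sum_{n\in S}a_n\varphi_n$ with $\sum_{n\in S}|a_n|^2\le 1$, uses Lemma \ref{lcon} to bound $\|f\|_{C_i}$ by $\|U_S\|_{C_i}$, normalizes by the same factor $1+\|U_S\|_{C_1}+\|U_S\|_{C_2}/\sqrt{\log N}$, and applies Lemma \ref{f_C1C2} to the rescaled function. Your additional remarks (homogeneity of the $\|\cdot\|_C$ norms, membership in $L^{p_1}$, the degenerate case) are fine and only make explicit what the paper leaves implicit.
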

	\begin{proof}
		Let $f(x) = \sum_{n \in S}a_n \varphi_n $ such that $\sum_{n \in S} |a_n|^2 \leq 1$. Lemma \ref{lcon} shows that
		$\norm{f}_{C}\le \norm{U_{S}}_C.$
		Consider 
		\[\Tilde{f}(x)=
		\frac{f(x)}{ 1+ \norm{U_S}_{C_1} + (\norm{U_{S}}_{C_2} / \sqrt{\log N})}.
		\]
		Since $\norm{\Tilde f}_{C_1} \leq 1$ and $\norm{\Tilde f}_{C_2} \leq \sqrt{\log N}$, 
		we may apply Lemma \ref{f_C1C2} to $\Tilde{f}$ and conclude that $\|\Tilde f\|_{p}\lesssim 1$.
		
	\end{proof}	
	\medskip
	
	Now, we are ready to prove Theorem \ref{tnew}.\begin{proof}[Proof of Theorem \ref{tnew}]
		
		We invoke Corollary \ref{rjutgurtu}
		to write 
		$$K_p(\Psi(\omega))\lesssim 1+\|U_{\Psi(\omega)}\|_{C_1}+\frac{\|U_{\Psi(\omega)}\|_{C_2}}{\sqrt{\log N}}.$$
		Corollary  \ref{Cor_prob_UJ} and H\"older implies that $\E(K_p(\Psi)) \lesssim_{p,q} 1$ if $p >q$ and $\E(K_p(\Psi)) \lesssim \kappa^{-1/2}$ if $p=q$. Since $|\Psi(\omega)|\sim K_q(\Phi)^{-2q/p} |\Phi|^{q/p} $ with probability close to 1, there is $\Psi \subset \Phi$ with desired properties.
		
	\end{proof}
	Let us turn to the proof of Theorem \ref{mir2}. 
	\begin{co}\label{Prop_est_UC}
		Let $\delta$, $C_1$ and $C_2$ be the constants in Corollary \ref{Cor_prob_UJ}. For sufficiently large $v$, we have the estimates
		\begin{equation*}
			\P\left(\norm{U_\Psi}_{C_1}^2 \geq v \right) \lesssim_{p,q}  \exp(-C_{p,q} v) \qquad \text{if} \ q >p,
		\end{equation*}
		\begin{equation*}
			\P\left(\norm{U_\Psi}_{C_1}^2 \geq \frac{v}{\kappa} \right) \lesssim_{q}   \exp(-C_{q} v) \qquad \text{if} \ q =p,
		\end{equation*}
		and
		\begin{equation*}
			\P(\norm{U_\Psi}_{C_2}^2 \geq \log(N) v ) \lesssim_{p,q} \exp(-C_{p,q} v).
		\end{equation*}
	\end{co}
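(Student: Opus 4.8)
The plan is to run the argument of Corollary~\ref{Cor_prob_UJ} essentially verbatim, but invoking the tail bound \eqref{rjirjgiigoti2} of Theorem~\ref{Tal_keythm} in place of the expectation bound \eqref{rjirjgiigoti}. So I would again work with $X=L^{p_1}([0,1]^d)$, $p_1=3p/2$, and $x_n=\varphi_n$, so that $\eta$ and $K(\eta)$ depend only on $p$; for $N$ sufficiently large (which is all that is needed in the application to Proposition~\ref{p3}) we have $B_1=C_1$ and $B_2=C_2$; and with the same choices $\epsilon_1=\tfrac12(1-\tfrac qp)$ when $p>q$, $\epsilon_1=\tfrac\kappa2$ when $p=q$, and $\epsilon_2=(100K(\eta)\log N)^{-1}$, the inequalities $\delta\le (B_i\epsilon_iN^{\epsilon_i})^{-1}\le 1$ hold exactly as checked there. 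Hence Theorem~\ref{Tal_keythm} applies with $C=C_1$ and with $C=C_2$.

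For the $C_1$ estimates I would apply \eqref{rjirjgiigoti2} with $\epsilon=\epsilon_1$, which gives, for $v\gtrsim1$,
$$\P\Bigl(\norm{U_\Psi}_{C_1}^2\gtrsim \frac{vK(\eta)}{\epsilon_1}\Bigr)\le L\exp(-v/L).$$
When $p>q$ the factor $K(\eta)/\epsilon_1$ depends only on $p,q$, so a linear change of variable in $v$ (and the monotonicity of $v\mapsto L\exp(-v/L)$) turns this into $\P(\norm{U_\Psi}_{C_1}^2\ge v)\lesssim_{p,q}\exp(-C_{p,q}v)$ for large $v$. When $p=q$ we have $K(\eta)/\epsilon_1=2K(\eta)/\kappa$, and the same change of variable yields $\P(\norm{U_\Psi}_{C_1}^2\ge v/\kappa)\lesssim_q\exp(-C_qv)$. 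For the $C_2$ estimate I would instead apply \eqref{rjirjgiigoti2} with $\epsilon=\epsilon_2$; since $K(\eta)/\epsilon_2=100K(\eta)^2\log N$, this reads
$$\P\Bigl(\norm{U_\Psi}_{C_2}^2\gtrsim 100K(\eta)^2(\log N)\,v\Bigr)\le L\exp(-v/L),$$
and absorbing the $N$-independent constant $100K(\eta)^2$ into a new variable gives $\P(\norm{U_\Psi}_{C_2}^2\ge (\log N)v)\lesssim_{p,q}\exp(-C_{p,q}v)$ for large $v$.

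There is essentially no real obstacle: all the substance is contained in Theorem~\ref{Tal_keythm}, and the only point requiring care is bookkeeping — namely checking that $\epsilon_1$, $\epsilon_2$ and $K(\eta)$ introduced in the proof of Corollary~\ref{Cor_prob_UJ} depend only on $p$ (and on $q$, $\kappa$), not on $N$, so that the exponential rates produced by the changes of variables are uniform in $N$.
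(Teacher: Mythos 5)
Your proposal is correct and follows essentially the same route as the paper: the paper's proof likewise reuses the setup of Corollary \ref{Cor_prob_UJ} (same $B_1=C_1$, $B_2=C_2$, same choices of $\epsilon_1$ and $\epsilon_2$) and then invokes \eqref{rjirjgiigoti2} of Theorem \ref{Tal_keythm} in place of \eqref{rjirjgiigoti}. Your added bookkeeping about absorbing the $N$-independent factors $K(\eta)/\epsilon_1$ and $100K(\eta)^2$ via a change of variable is exactly the step the paper leaves implicit.
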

	\begin{proof}
		The proof is similar to the proof of Corollary \ref{Cor_prob_UJ}. For sufficiently large $N$, we have $B_1 = C_1$ and $B_2 = C_2$. We let $\epsilon_1  = 1/2 (1-q/p)$ if $q >p$ and $\epsilon_1 =\kappa/2 $ if $q=p$ and let $\epsilon_2 = (100K(\eta)\log N)^{-1}$. Then, \eqref{rjirjgiigoti2} in Theorem \ref{Tal_keythm} implies the desired inequalities.
		
	\end{proof}
	We can now prove Theorem \ref{mir2} using a very similar argument.
	\begin{proof}[Proof of Theorem \ref{mir2}]
		By Corollary \ref{rjutgurtu}, we can write 
		$$K_p(\Psi(\omega))\lesssim 1+\|U_{\Psi(\omega)}\|_{C_1}+\frac{\|U_{\Psi(\omega)}\|_{C_2}}{\sqrt{\log N}}.$$
		If $q >p$, there exists a constant $C$ such that
		$$
		\P(K_p (\Psi(\omega) \geq u) \leq \P(\norm{U_{\Psi(\omega)}} \geq Cu ) + \P(\norm{U_{\Psi(\omega)}} \geq C \log(N)u ).
		$$
		Thus, Corollary \ref{Prop_est_UC} implies that $\P(K_p (\Psi(\omega) \geq u)  \lesssim_{p,q} \exp (-C_{p,q} u^2 )$.
		
		If $q=p$, there exists a constant $C$ such that 
		$$
		\P\left(K_q (\Psi(\omega) \geq \frac{u}{\kappa} \right) \leq \P\left(\norm{U_{\Psi(\omega)}} \geq C \frac{u}{\kappa}  \right) + \P\left(\norm{U_{\Psi(\omega)}} \geq C \log(N) u  \right).
		$$
		Using Corollary \ref{Prop_est_UC} again, we obtain that $\P(K_q (\Psi(\omega) \geq u/\kappa)  \lesssim_{q} \exp (-C_{q} u^2 )$. 
		
	\end{proof}
	\begin{re}
		\label{refinal}	
		We presented our arguments in the context of real-valued systems, and with constants $K_p$ defined using real coefficients $a_n$. To emphasize this distinction, let us denote this constant by $K_p^{\R}$. Given a complex-valued system, we may define the associated constant $K^{\C}_{p}$ by allowing complex coefficients. 
		
		Let us  consider an orthonormal system $\Phi$
		of complex exponentials $$\phi_n(x)=e(s_n\cdot  x)=\cos(2\pi s_n\cdot x)+i\sin(2\pi s_n\cdot x),$$ with $s_n\in S\subset\R^d$. It is easy to see that this forces $s_n-s_{m}$ to have at least one integer coordinate, for each $n,m\in S$. 
		Thus,  the (appropriately normalized) systems $\Phi_1=\{\cos(2\pi s_n\cdot x):\;s_n\in S\}$ and $\Phi_2=\{\sin(2\pi s_n\cdot x):\;s_n\in S\}$
		are themselves orthonormal systems, consisting of real-valued functions. The arguments we presented show that for each $i\in\{1,2\}$, a random selection of a subset $\Phi_i(\omega)$ of $\Phi_i$ will satisfy the specific requirements with probability very close to 1. So, in fact, such a generic selection will simultaneously work for both systems. Since 
		$$K_p^{\C}(\Phi(\omega))\sim K_p^{\R}(\Phi_1(\omega))+K_p^{\R}(\Phi_2(\omega)),$$
		all the probabilistic results we presented extend to systems of complex exponentials. 
	\end{re}

\end{document}